\newcommand{\scrP }{\mathscr{P}}
\newcommand{\scrS }{\mathscr{S}}
\theoremstyle{plain}
\newtheorem{theorem}{Theorem}[section]
\newtheorem{proposition}[theorem]{Proposition}
\newtheorem{lemma}[theorem]{Lemma}
\newtheorem{remark}[theorem]{Remark}
\numberwithin{theorem}{section}
\numberwithin{equation}{section}
\newcommand{\average}{{\mathchoice {\kern1ex\vcenter{\hrule height.4pt
width 6pt depth0pt} \kern-9.7pt} {\kern1ex\vcenter{\hrule
height.4pt width 4.3pt depth0pt} \kern-7pt} {} {} }}
\def\R{\mathbb{R}}
\newcommand{\va }{\varphi }
\renewcommand{\a }{\alpha }
\renewcommand{\b }{\beta }
\renewcommand{\d}{\delta }
\newcommand{\e }{\varepsilon }
\newcommand{\G }{\Gamma}
\renewcommand{\l }{\lambda }
\renewcommand{\L }{\Lambda }
\newcommand{\vp }{\varphi }
\newcommand{\s }{\sigma }
\renewcommand{\t }{\tau }
\newcommand{\z }{\zeta}
\renewcommand{\th }{\theta }
\newcommand{\ov}{\overline}
\newcommand{\be}{\begin{equation}}
\newcommand{\ee}{\end{equation}}
\newcommand{\de}{\partial}
\newcommand{\ti}{\widetilde}
\newcommand{\ra}{{\rangle}}
\newcommand{\la}{{\langle}}
\newcommand{\calO }{\mathcal{O}}
\newcommand{\N}{\mathbb{N}}
\newcommand{\cD}{{\mathcal D}}
\newcommand{\cF}{{\mathcal F}}
\newcommand{\cH}{{\mathcal H}}
\newcommand{\cK}{{\mathcal K}}
\newcommand{\cL}{{\mathcal L}}
\newcommand{\cM}{{\mathcal M}}
\newcommand{\cN}{{\mathcal N}}
\newcommand{\cO}{{\mathcal O}}
\newcommand{\eps}{\varepsilon}
\renewcommand{\phi}{\varphi}
\renewcommand{\epsilon}{\varepsilon}
\begin{document}

\title[Delaunay hypersurfaces with constant nonlocal mean curvature]
{Delaunay hypersurfaces with constant nonlocal mean curvature }

\author{Xavier Cabr\'e}
\address{X. Cabr\'e\textsuperscript{1,2}:
\newline
\textsuperscript{1} Universitat Polit\`ecnica de Catalunya, Departament
de Matem\`{a}tiques, Diagonal 647, 08028 Barcelona, Spain
\newline
\textsuperscript{2} ICREA, Pg. Lluis Companys 23, 08010 Barcelona, Spain}
\email{xavier.cabre@upc.edu}

\author[Mouhamed M. Fall]
{Mouhamed Moustapha Fall}
\address{M. M. Fall: African Institute for Mathematical Sciences of Senegal, 
KM 2, Route de Joal, B.P. 14 18. Mbour, S\'en\'egal}
\email{mouhamed.m.fall@aims-senegal.org}

\author{Tobias Weth}
\address{T. Weth: Goethe-Universit\"{a}t Frankfurt, Institut f\"{u}r Mathematik.
Robert-Mayer-Str. 10 D-60054 Frankfurt, Germany}
\email{weth@math.uni-frankfurt.de}

\thanks{The first author is supported by MINECO grant MTM2014-52402-C3-1-P. He
is member of the Barcelona Graduate School of Mathematics and of the Catalan research group 2014 SGR 1083.
 The second author's work is supported by the Alexander 
von Humboldt foundation. Part of this work was done while he was visiting
the University of Frankfurt, during July and August, 2015. The third author is supported by DAAD (Germany) within the program 57060778.}


\begin{abstract}
We study hypersurfaces of $\mathbb{R}^N$ with constant nonlocal (or fractional) mean curvature.
This is the equation associated to critical points of the fractional perimeter functional under a volume constraint. 
We establish the existence of a smooth branch of periodic cylinders in $\mathbb{R}^N$, $N\geq 2$, 
all of them with the same constant nonlocal mean curvature, and bifurcating from 
a straight cylinder. These are Delaunay type cylinders in the nonlocal setting. The proof uses 
the Crandall-Rabinowitz theorem applied to a quasilinear type fractional elliptic equation.\\

\bigskip
\noindent

R\'{E}SUM\'{E}. Nous \'{e}tudions des hypersurfaces dans $\R^N$, $N \geq  2$,  \`{a}  courbure moyenne non-locale (ou fractionaire) constante. Cel\`{a} revient  \`{a} \'{e}tudier une \'{e}quation associ\'{e}e aux points critiques du p\'{e}rim\`{e}tre fractionnaire sous une contrainte de volume. Nous \'{e}tablissons l'existence d'une branche lisse d'hypersurfaces p\'{e}riodiques de type Delauney qui ont toutes la m\^{e}me courbure moyenne non-locale que celle d'un cylindre droit.  La preuve utilise le th\'{e}or\`{e}me de bifurcation de Crandall-Rabinowitz appliqu\'{e}  \`{a} une \'{e}quation elliptique fractionnaire de type quasilin\'{e}aire.

\end{abstract}

\maketitle

\section{Introduction and main results}
\label{sec:main-result}
Let $\alpha \in (0,1)$, $N\geq 2$,  and let $E$ be an open set  in $\R^N$ with $C^2$-boundary. 
For every $x \in \partial E$, the nonlocal or fractional mean curvature
of $\partial E$ at $x$ (that we call NMC for short) is given by
\begin{equation}
  \label{eq:def-frac-curvature}
H_E(x)= PV \int_{\R^N} \frac{1_{E^c}(y) -1_{E}(y)}{|x-y|^{N+\alpha}}\,dy :=
\lim_{\eps \to 0} \int_{|y-x| \ge \eps} \frac{1_{E^c}(y) -1_{E}(y)}{|x-y|^{N+\alpha}}\,dy  
\end{equation}
and is well defined. Here and in the following, $E^c$ denotes the complement of $E$ in  $\R^N$ and $1_A$ denotes the characteristic function of a set $A \subset \R^N$. 
In the first integral PV denotes the principal value sense.
For the asymptotics $\alpha$ tending to 0 or 1, $H_E$ should be
renormalized with a positive constant factor $C_{N,\alpha}$.
Since constant factors are not relevant for the results of this paper, we use the simpler expression in 
(\ref{eq:def-frac-curvature}) without the constant~$C_{N,\alpha}$.

An alternative expression for the NMC is given by   
\begin{equation}\label{geometric-0}
H_E(x)= -\frac{2}{\alpha} \int_{\partial E} |x-y|^{-(N+\alpha)} (x-y)\cdot\nu_E(y)\,dy, 
\end{equation}
where $\nu_E(y)$ denotes the outer unit normal to $\partial E$ at $y$. If $\partial E$ is of class 
$C^{1,\beta}$ for some $\beta>\alpha$ and $\int_{\partial E}(1+ |y|)^{1-N-\alpha}\,dy< \infty$, then the integral in 
\eqref{geometric-0} is absolutely convergent in the Lebesgue sense and the expression follows from 
\eqref{eq:def-frac-curvature} via the divergence theorem.

The notion of nonlocal mean curvature was introduced around 2008 by Caffarelli and Souganidis in \cite{Caff-Soug2010} and by
Caffarelli, Roquejoffre, and Savin in~\cite{Caffarelli2010}. As first discovered in \cite{Caffarelli2010}, the nonlocal mean curvature arises as the first variation of the fractional perimeter. 
For the notion of fractional perimeter and its convergence to classical perimeter as $\alpha \to 1$, see the papers \cite{Davila02,Ponce04,Ambrosio}. The seminal paper \cite{Caffarelli2010} established the first
existence and regularity theorems on nonlocal minimal surfaces, that is, (minimizing) hypersurfaces with zero NMC.  
Within these years, there have been important efforts and results concerning
nonlocal minimal surfaces but still, apart from dimension $N=2$, there is a lot to be understood,  
mainly for the classification of stable nonlocal minimal cones. See \cite[Chapter 6]{Bucur} for a recent survey of known results.

The purpose of this paper is to establish a nonlocal analogue of the classical result of
Delaunay \cite{Delaunay} on periodic cylinders with constant mean curvature, the so called onduloids.
In \cite{Cabre2015A}, a paper by the present authors and Sol\`a-Morales, we accomplished this in the plane $\R^2$;
that is, we proved the existence of a continuous branch of periodic bands, starting from a straight band, all of them
with the same constant NMC. Here we establish the analogue result but in $\R^N$ for $N\geq 3$. 
In addition, we show that the branch is not only continuous but smooth ---and we prove this also in $\R^2$.

More precisely,
we consider sets $E\subset\R^N$, $N\geq 2$, with constant nonlocal mean curvature
which have the form 
\begin{equation}
  \label{eq:cylindrical-graph}
E_u=\{(s,\z)\in \R\times \R^{N-1}\, : \, |\z|<u (s)\},
\end{equation}
where $u: \R \to (0,\infty)$ is a positive function. 
We establish the existence of a smooth branch of sets as above (that we call cylinders) 
which are periodic in the variable~$s$ and have
all the same constant nonlocal mean curvature; they bifurcate from a straight cylinder $\{|\z|<R\}$. 
The radius $R$ of the straight cylinder is chosen so that the periods of the new cylinders converge to $2\pi$ as they
approach the straight cylinder. Our result is of perturbative nature and thus 
we find periodic cylinders which are all close to the straight one.  

The following is the precise statement of our result. Throughout the paper, $C^{k,\gamma}(\R)$ denotes the space of
$C^k(\R)$ bounded functions $u$, with bounded derivatives up to order $k$ and with $u^{(k)}$ having finite H\"older 
seminorm of order $\gamma\in (0,1)$. The space is equipped with the standard norm \eqref{eq:def-hoelder-norm}.

\begin{theorem}
\label{res:cyl1}
Let $N \ge 2$. For every $\alpha\in(0,1)$ there exist $R>0$, $a_0>0$, $\beta \in (\alpha,1)$ and $C^\infty$-maps 
\begin{align*}
&(-a_0,a_0) \to C^{1,\beta}(\R), \qquad a \mapsto u_a\\
&(-a_0,a_0) \to (0,\infty), \qquad a \mapsto \lambda(a)  
\end{align*}
with the following properties:
\begin{itemize}
\item[(i)] $\lambda(0)=1$ and $u_0 \equiv R$. 
\item[(ii)] For every $a \in (-a_0,a_0) \setminus \{0\}$, the function $u_a: \R \to \R$ is even and periodic with minimal period ${2\pi}/{\lambda(a)}$. 
\item[(iii)] For every $a \in (-a_0,a_0)$, the set 
$$
 E_{u_a}=\{(s,\z)\in \R\times \R^{N-1}\, : \, |\z|<u_a (s)\}
$$ 
has positive constant nonlocal mean curvature equal to the nonlocal mean curvature of the straight cylinder 
$$
E_R:= \{(s,\z)\in \R\times \R^{N-1}\, : \, |\z|<R\}.
$$
Moreover, $ E_{u_a}\not= E_{u_{a'}}$ for $a,a' \in (-a_0,a_0)$, $a\not= a'$.
\item[(iv)] For every $a \in (-a_0,a_0)$, we have 
\begin{equation}\label{form}
  u_{a}(s)=R+\frac{a}{\l(a)}\left\{\cos\left( \lambda(a) s\right)+ v_{a}(\lambda(a)s)\right\},
\end{equation}
where $v_{a}\to 0$ in $C^{1,\beta}(\R)$ as $a\to 0$ and $\int_{-\pi}^{\pi}v_{a}(t)\cos(t)\, dt=0$ for every $a \in (-a_0,a_0)$. Moreover, we have 
$$
\lambda(-a)= \lambda(a)\quad \text{and}\quad u_{-a}(s)= u_a\Bigl(s+ \frac{\pi}{\lambda(a)}\Bigr)
$$
for every $a \in (-a_0,a_0)$, $s \in \R$, $\lambda'(0)=0$ and $\partial_a u_a \big|_{a=0}  = \cos(\cdot)$.  
\end{itemize}
\end{theorem}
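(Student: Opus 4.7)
The plan is to apply the smooth Crandall-Rabinowitz bifurcation theorem to a scalar nonlinear operator encoding the constant-NMC condition. To accommodate the varying period, I would rescale by writing
$$u(s) = R + \frac{1}{\lambda}\, w(\lambda s),$$
where $w$ is $2\pi$-periodic and even; then $\lambda > 0$ becomes a bifurcation parameter while $w$ lives in a fixed function space. Fix $\beta \in (\alpha,1)$ and let $X$ (resp.\ $Y$) denote the Banach space of even, $2\pi$-periodic functions in $C^{1,\beta}(\mathbb{R})$ (resp.\ $C^{\beta}(\mathbb{R})$). By rotational symmetry in $\zeta$, $H_{E_u}$ reduces to a function of $s$; by translational invariance and the scaling, it is $2\pi$-periodic in $t := \lambda s$. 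Define
$$F\colon \mathcal{U} \times J \to Y, \qquad F(w,\lambda)(t) := H_{E_u}\bigl(t/\lambda\bigr) - H_{E_R},$$
with $\mathcal{U}$ a neighborhood of $0$ in $X$ and $J$ an interval around $1$. Then (iii) is equivalent to $F(w,\lambda) = 0$ with $w \not\equiv 0$, and $F(0,\lambda) \equiv 0$.

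The first technical step is to show that $F \in C^\infty(\mathcal{U} \times J, Y)$. Parametrizing $\partial E_u$ by $(s,\omega) \in \mathbb{R} \times S^{N-2}$ and substituting into \eqref{geometric-0}, the NMC becomes an integral made absolutely convergent by the gain $\beta > \alpha$ combined with the cancellation in $(x - y)\cdot \nu(y)$. One splits the integrand into a near-diagonal singular piece (whose leading part is a fractional-Laplacian-type operator of order $\alpha$ acting on $w$, with smooth remainder in $w$) and a smooth far-diagonal part; H\"older estimates and differentiation under the integral sign then give the required smoothness. This adapts to $N \geq 3$ the analogous estimates of \cite{Cabre2015A} for $N = 2$, the new ingredient being the extra $S^{N-2}$-integration.

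The decisive step is the spectral analysis of $L_\lambda := D_w F(0,\lambda)$. Translational invariance in $s$ and rotational symmetry in $\omega$ diagonalize $L_\lambda$ on the Fourier basis of even $2\pi$-periodic functions:
$$L_\lambda \bigl(\cos(k\,\cdot)\bigr)(t) = \mu_k(\lambda)\cos(kt), \qquad \mu_k(\lambda) = \frac{1}{\lambda}\, m_R(k\lambda) \quad (k \in \mathbb{Z}_{\geq 0}),$$
where $m_R(\tau)$ is a one-variable symbol arising from infinitesimal cylindrical perturbations of $E_R$, reducible via the $S^{N-2}$-integration to a Gegenbauer/hypergeometric expression. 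The main obstacle is to select $R > 0$ so that $m_R(1) = 0$, $m_R(k) \neq 0$ for every integer $k \neq 1$, and the zero at $\tau = 1$ is simple ($m_R'(1) \neq 0$). This requires identifying $m_R$ explicitly enough to analyze its behavior as $\tau \to 0^+$ and $\tau \to \infty$, ruling out the $k = 0$ mode via the strict monotonicity of $R \mapsto H_{E_R}$, and isolating the first positive integer zero.

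With the spectral picture in hand, $\ker L_1$ is spanned by $\cos$, $\operatorname{Range}(L_1)$ has codimension one in $Y$, and the transversality condition reduces to $\partial_\lambda \mu_1(1) = m_R'(1) \neq 0$. The smooth Crandall-Rabinowitz theorem then produces $a_0 > 0$ and $C^\infty$-maps $a \mapsto w_a$, $a \mapsto \lambda(a)$ with $w_0 = 0$, $\lambda(0) = 1$, and $w_a = a(\cos + \tilde v_a)$, where $\tilde v_a \to 0$ in $X$ and $\int_{-\pi}^{\pi}\tilde v_a(t)\cos(t)\,dt = 0$. Setting $u_a(s) := R + \lambda(a)^{-1} w_a(\lambda(a) s)$ delivers (i)--(iv). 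The remaining symmetries follow from uniqueness in Crandall-Rabinowitz: the involution $\sigma(w,\lambda) := (-w(\cdot + \pi), \lambda)$ preserves $F$ because $\cos(t+\pi) = -\cos(t)$ and the NMC is translation invariant, so $\sigma(w_a,\lambda(a)) = (w_{-a},\lambda(-a))$, yielding $\lambda(-a) = \lambda(a)$ (hence $\lambda'(0) = 0$), $u_{-a}(s) = u_a(s + \pi/\lambda(a))$, and $\partial_a u_a\big|_{a=0} = \cos(\cdot)$.
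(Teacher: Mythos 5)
Your overall route coincides with the paper's: rescale to fixed period $2\pi$, treat $\lambda$ (equivalently $\mu=\lambda R$) as the bifurcation parameter, prove smoothness of the NMC operator between H\"older spaces of periodic even functions, diagonalize the linearization at the straight cylinder on the cosine basis, apply Crandall--Rabinowitz, and get the symmetries in (iv) from local uniqueness. However, as written the proposal has concrete gaps. First, the codomain is wrong: near the cylinder the NMC operator is of order $1+\alpha$ (its linearization has kernel $G_\alpha(\tau)\simeq |\tau|^{-2-\alpha}$ near $\tau=0$), so $H$ maps $C^{1,\beta}$ graphs into $C^{0,\beta-\alpha}$, not into $C^{0,\beta}$; with $Y=C^{0,\beta}$ your map $F$ does not take values in $Y$, so neither the smoothness of $F$ nor the Fredholm setting can be established. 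The paper takes $Y=C^{0,\beta-\alpha}_{p,e}$ (and additionally $\beta<2\alpha+1/2$, used in the regularity step).

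Second, the decisive spectral facts are only announced, and the method you indicate for them is insufficient: analyzing the symbol as $\tau\to 0^+$ and $\tau\to\infty$ and ``isolating the first positive integer zero'' does not exclude zeros at integers $k\ge 2$ at the selected parameter, nor does it give simplicity of the zero, i.e. transversality. The paper's resolution is the strict monotonicity of $b\mapsto h(b)=\int_\R (1-\cos(b\tau))G_\alpha(\tau)\,d\tau$ in Lemma~\ref{lem:eigen-monoton} (proved via $g'<0$ for $N\ge 4$ and a modified Bessel function positivity argument for $N=3$); combined with $h(0)=0$ and $h(b)\asymp b^{1+\alpha}$, this produces a unique $\mu_*$ with $\lambda_1(\mu_*)=0$, all other eigenvalues nonzero, and $h'(\mu_*)>0$, which is exactly \eqref{eq:transversality-cond}. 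Without an argument of this kind your choice of $R$ is not justified. Third, passing from the Fourier/Sobolev invertibility to the statement that the range of $L$ in the H\"older space $Y$ is the orthogonal complement of $\cos(\cdot)$ is not automatic: the paper needs the regularity Lemma~\ref{lem:Regularity} (reduction to the pure kernel $|t|^{-2-\alpha}$ via $|g(\rho)-g_0|\le C\rho$ and Silvestre's estimate) to show that the $L^2$-solution actually lies in $X$. Finally, note that the $C^\infty$ smoothness of $H$ for $N\ge 3$ is not a routine adaptation of the 2D estimates with ``an extra $S^{N-2}$-integration'': the paper had to pass to the reparametrized expression of Lemma~\ref{Hreg} (the substitution $t=\tau/|\sigma-e_1|$, so that $\sigma$ enters only through $u(s-p_\sigma t)$) before the H\"older estimates close; your sketch does not account for this difficulty.
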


We prove that the branch is  $C^\infty$ in the parameter $a$, extending our previous work \cite{Cabre2015A} in $\R^2$ where we only proved continuous dependence. 

The smoothness (i.e., the $C^\infty$-character) of our $C^{1,\beta}$ hypersurfaces $\de E_{u_a}$, and in general of 
$C^{1,\beta}$ hypersurfaces in $\R^N$ with constant NMC which are, locally, Lipschitz graphs follows (since $\beta >\alpha$)
from the methods and results of Barrios, Figalli, and Valdinoci~\cite{Barrios} on nonlocal minimal graphs. 
This holds for all $N\geq 2$.
More generally, to deduce the $C^\infty$ regularity, \cite{Barrios} needs to assume that the hypersurface is 
$C^{1,\beta}$ for some $\beta>\alpha/2$  and that it has constant 
nonlocal mean curvature in the viscosity sense; this fact can be found in Section~3.3
of \cite{Barrios}. Here, the notion of viscosity solution is needed since the expression 
\eqref{eq:def-frac-curvature} for the NMC is only well defined
for $C^{1,\beta}$ sets when $\beta > \alpha$.

Regarding CNMC hypersurfaces, that is, hypersurfaces with constant nonlocal
mean curvature, there have been three articles before this one (apart from the papers on 
zero NMC, that is, nonlocal minimal surfaces).
In \cite{Cabre2015A}, besides finding the Delaunay bands in $\R^2$, the present authors and Sol\`a-Morales also
established the analogue of the Alexandrov rigidity theorem for bounded CNMC hypersurfaces in $\R^N$; these sets
must be balls. At the same time and independently, Ciraolo, Figalli, Maggi, and Novaga~\cite{Ciraolo2015} also proved
the Alexandrov rigidity theorem for CNMC hypersurfaces in $\R^N$ and, in addition, a strong quantitative version
of this rigidity theorem.

A third paper, \cite{Davila2015}, by D\'avila, del Pino, Dipierro, and Valdinoci, establishes variationally
the existence of periodic and cylindrically symmetric hypersurfaces in $\R^N$
which minimize a certain renormalized fractional perimeter under a volume constraint.
More precisely, \cite{Davila2015}~establishes the existence of a $1$-periodic minimizer for every given volume within the
slab $\{(s,\z)\in\R\times\R^{N-1}\, : \, -1/2<s<1/2\}$.
We have realized recently that these minimizers are in fact CNMC hypersurfaces in a certain weak sense. 
They would be CNMC hypersurfaces in the classical sense defined above if one could prove
that they are of class $C^{1,\beta}$ for some $\beta>\alpha/2$.
However, \cite{Davila2015} does not prove any regularity
for the minimizers. The article also proves that for small volume constraints, the minimizers
tend in measure (more precisely, in the so called Fraenkel asymmetry) to a
periodic array of balls.

It is an open problem to establish the existence of global continuous 
branches of nonlocal Delaunay hypersurfaces  as in Theorem~\ref{res:cyl1}
and to study their limiting configuration. In the case of classical mean curvature, embedded Delaunay hypersurfaces 
vary from a cylinder to an infinite compound of tangent spheres. However, it is easy to see that 
an infinite compound of aligned round spheres, tangent or disconnected, does not have constant NMC. 
In a forthcoming paper~\cite{Cabre2015B}, we study nonlocal analogues of this periodic and disconnected CMC set.

Also related to our work, the papers \cite{Sicbaldi2010} and \cite{Sicbaldi2012}
established the existence of periodic and cylindrical symmetric domains in $\R^N$ whose first Dirichlet
eigenfunction has constant Neumann data on the boundary. This is therefore a
nonlinear and nonlocal elliptic operator of order 1 based on a certain Dirichlet to Neumann map.

The nonlocal mean curvature flow for the notion of 
NMC considered in this paper has been studied in strong sense in \cite{Saez2015} and in viscosity sense
in \cite{Imbert2009,Chambolle.Morini.Ponsiglione2015,Cinti.Sinestrari.Valdinoci-ArXiv}. 

Let us describe the proof of Theorem~\ref{res:cyl1} and its main difficulties.
The first step is to write the NMC operator acting on graphs 
of functions ---the functions $u_a$ above. This leads to an integral operator of quasilinear type 
acting on functions $u=u(s)$ and involving a double integral with respect to $d\sigma ds$, where $\sigma\in S^{N-2}$ 
takes into account the symmetry of revolution in the variable $\z\in \R^{N-1}$. The presence of this new integral in $d\sigma$
is the main difference and difficulty with respect to our previous paper where $N=2$. In fact, changing the order of integration in
$d\sigma ds$, or making different changes of variables to simplify the integrands, will lead to quite different expressions
for the nonlocal mean curvature of the set $E_a$. We will present three of such expressions, namely
\eqref{new-rep-H}, \eqref{eq:good-expression-NMC}, and \eqref{eq:NMC-varphi} below. Finding the {\em second} of these
expressions was crucial to be able to prove the smoothness of the nonlocal mean curvature operator, which is stated in  
Proposition~\ref{lem:diff-H} and established in Proposition~\ref{prop:smooth-ovH} below.

Another essential point in the proof is to have a simple expression for the linearized operator at the straight
cylinder. This is given in Proposition~\ref{lem:diff-H} and we found it using our {\em third} expression \eqref{eq:NMC-varphi}
for $H$. Even though we prove the formula for the
linearized operator by using our second expression for $H$ as
given in \eqref{eq:good-expression-NMC}, it would have been very difficult to guess it from this second expression.

The linearization gives rise to an integro-differential operator with a singular kernel close to (but different than) that
of the fractional Laplacian. This is another difference with the previous 2D case. We use regularity theory
both in Sobolev and H\"older spaces to analyze the linear operator and, thus, to be able to apply
the Crandall-Rabinowitz theorem in~\cite{Crandall1971}, which will lead to our result.

The paper is organized as follows. Section~2 sets up the nonlinear nonlocal operator to be studied and states a simple
expression for the linearized 
operator at a straight cylinder. It contains also some preliminary estimates concerning the linearized problem. 
These estimates 
are used in Section~3 to solve our nonlinear problem using the Crandall-Rabinowitz theorem. 
In Section~4 we establish the $C^\infty$ character of our nonlocal mean curvature operator
and we prove the formula for the linearized operator at a straight cylinder. Since some expressions and
estimates in the previous sections require $N\geq 3$, in Section~5 we treat the case $N=2$.

\section{The NMC operator acting on cylindrical graphs of $\mathbb{R}^N$}
Let $\alpha \in (0,1)$ and $\beta \in (\alpha,1)$. In Section~\ref{sec:nonl-probl-solve} 
we will also assume that $\beta < 2\alpha + 1/2$; see (\ref{defbeta}) below. 
This extra assumption will only be used at the end of the proof of Proposition~\ref{propPhi}.   
For a positive function $u \in C^{1,\beta}(\R)$, we consider the set $E_u$ as defined in (\ref{eq:cylindrical-graph}).  
We first recall the following expression for the NMC of $E_u$:
\begin{equation}\label{geometric}
H_{E_{u}}(x)= -\frac{2}{\alpha} \int_{\partial E_{u}} |x-y|^{-(N+\alpha)} (x-y)\cdot\nu_{E_{u}}(y)\,dy; 
\end{equation}
see e.g. \cite[Eqn. (1.2)]{Cabre2015A}. Here $\nu_{E_{u}}(y)$ denotes the unit outer normal of $ E_{{u}}$ and $dy$ is the volume element of $\de E_{{u}}$. Next, we consider the open set
\begin{equation}
  \label{eq:def-cO}
\cO:= \{{u}  \in C^{1,\beta}(\R) \::\: \inf_\R {u}  >0\}.
\end{equation}
For ${u} \in \cO$, we consider the map $F_{u}:  \R\times \R^{N-1} \to \R^N$ given by 
$$
F_{u}(s,\z)= (s,{u}(s) \z).
$$
We have that the boundary of $E_u$, 
$$
\de E_{u}=\left\{(s,{u}(s)\s) \in \R\times \R^{N-1}\, : \, \s \in S^{N-2}\right\},
$$
is parameterized  by the restriction of  $F_u $ to  $\R\times S^{N-2}$.

\subsection{Two fundamental expressions for the NMC operator}
The following results provide two expressions for the NMC of $E_{u}$ in terms of the above parametrization and the 
function $u$. Here, when $N=2$, we have $S^{N-2}=S^0=\{-1,1\}\subset\R$.
\begin{lemma}\label{lem:geomNMC}  
Let ${u}\in \cO$. Then the nonlocal mean curvature $H_{E_{u}}$  ---that we will denote by $H({u})(s)$---
at a point $(s,{u}(s)\th )$, with $\th\in S^{N-2}$, does not depend on $\th$ and is given by 
\begin{align}
  -\frac{\alpha}{2}  H({u})(s) =& \int_{S^{N-2}}  \int_{\R}
  \frac{
\bigl\{ u(s)-u(s-\tau)-\tau u'(s-\tau) \bigr\}u^{N-2} (s-\t)
}{
\{\t^2+({u}(s)-{u}(s-\t))^2+{u}(s){u}(s-\t)|\sigma-e_1|^2 \}^{(N+\a)/2}
}
d\tau d\s   \nonumber \\
& \hspace{-14mm} -
 \frac{{u}(s)}{2} \int_{S^{N-2}}   \int_{\R}
 \frac{
|\sigma-e_1|^2 u^{N-2} (s-\t)
}{
\{\t^2+({u}(s)-{u}(s-\t))^2+{u}(s){u}(s-\t)|\sigma-e_1|^2 \}^{(N+\a)/2}
}
d\t d\s  , \label{new-rep-H}
\end{align} 
where $e_1=(1,0,\dots,0) \in \R^{N-1}$.
Moreover, when $N\geq 3$ the two integrals above converge absolutely in the Lebesgue sense.
\end{lemma}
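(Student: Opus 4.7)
The plan is to derive \eqref{new-rep-H} from the surface-integral representation \eqref{geometric} by plugging in the cylindrical parametrization $F_u$, exploiting the rotational symmetry in $\zeta \in \R^{N-1}$, and performing two elementary algebraic rearrangements.

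\textbf{Setting up the parametrization.} I would first fix the reference point $x=(s,u(s)\theta)$ with $\theta \in S^{N-2}$ and parametrize $\partial E_u$ by $y=(t,u(t)\sigma)$, $(t,\sigma)\in \R \times S^{N-2}$. Using the defining function $\Phi(s,\zeta)=|\zeta|^2-u(s)^2$, a direct computation yields the outer unit normal
$$
\nu_{E_u}(y)=\frac{(-u'(t),\sigma)}{\sqrt{1+u'(t)^2}}.
$$
Since the metric induced on $\partial E_u$ by $F_u$ is block diagonal, with $g_{tt}=1+u'(t)^2$ and angular block $u(t)^2 g_{S^{N-2}}$, one has $dy=\sqrt{1+u'(t)^2}\,u(t)^{N-2}\,dt\,d\sigma$. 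Plugging into \eqref{geometric}, the factor $\sqrt{1+u'(t)^2}$ cancels and
$$
(x-y)\cdot \nu_{E_u}(y)\,dy=\bigl\{-(s-t)u'(t)+u(s)(\theta\cdot\sigma)-u(t)\bigr\}u(t)^{N-2}\,dt\,d\sigma,
$$
while $|x-y|^2=(s-t)^2+u(s)^2-2u(s)u(t)(\theta\cdot\sigma)+u(t)^2$.

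\textbf{Symmetry and algebraic rearrangement.} After the change of variable $t=s-\tau$, both the integrand and the measure depend on $\theta$ only through $\theta\cdot\sigma$. Applying a rotation sending $\theta$ to $e_1$ and using rotational invariance of the spherical measure shows that $H(u)(s)$ is independent of $\theta$ and allows $\theta\cdot\sigma$ to be replaced by $\sigma_1:=\sigma\cdot e_1$ in the integral. To match \eqref{new-rep-H}, I would use the two identities
$$
u(s)^2-2u(s)u(s-\tau)\sigma_1+u(s-\tau)^2=(u(s)-u(s-\tau))^2+u(s)u(s-\tau)|\sigma-e_1|^2
$$
and
$$
-\tau u'(s-\tau)+u(s)\sigma_1-u(s-\tau)=\bigl[u(s)-u(s-\tau)-\tau u'(s-\tau)\bigr]-\tfrac{u(s)}{2}|\sigma-e_1|^2,
$$
both immediate consequences of $|\sigma-e_1|^2=2(1-\sigma_1)$. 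Substituting and splitting the numerator along these two pieces produces exactly the two integrals on the right-hand side of \eqref{new-rep-H}.

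\textbf{Absolute convergence for $N\geq 3$.} The last point is to justify that the splitting is legitimate, i.e. that each of the two resulting integrals is absolutely convergent; this is the main technical step and the reason for assuming $N\geq 3$. Since $u\in C^{1,\beta}(\R)$ with $\beta>\alpha$, writing $u(s)-u(s-\tau)-\tau u'(s-\tau)=\int_0^\tau [u'(s-\tau+r)-u'(s-\tau)]\,dr$ gives the bound $O(|\tau|^{1+\beta})$ near $\tau=0$ and $O(|\tau|)$ at infinity. With $\rho:=|\sigma-e_1|$, spherical coordinates on $S^{N-2}$ centered at $e_1$ contribute a factor $\rho^{N-3}d\rho$ (times a bounded angular measure), and the common denominator is bounded below by $c(\tau^2+\rho^2)^{(N+\alpha)/2}$. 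A polar change of variable in $(\tau,\rho)$ then reduces the near-origin contributions to radial integrals of the form $\int r^{\beta-1-\alpha}\,dr$ for the first piece and $\int r^{-\alpha}\,dr$ for the second, both convergent; at infinity each integrand is $O(|\tau|^{1-N-\alpha})$, integrable since $N+\alpha>2$. The main obstacle is precisely this verification: after the split the two numerators are individually more singular than the combined one in \eqref{geometric}, so absolute convergence genuinely relies on the extra factor $\rho^{N-3}$ coming from angular integration on $S^{N-2}$, which is why the two-dimensional case must be deferred to Section~5.
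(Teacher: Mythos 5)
Your proposal is correct and follows essentially the same route as the paper's proof: start from \eqref{geometric}, parametrize by $F_u$, use rotational invariance to set $\theta=e_1$, change variables $\tau=s-\bar s$, split the numerator via $1-\sigma_1=\tfrac12|\sigma-e_1|^2$, and verify absolute convergence of the two pieces for $N\geq 3$ using the bound $|u(s)-u(s-\tau)-\tau u'(s-\tau)|\lesssim \min(|\tau|^{1+\beta},|\tau|)$ together with the lower bound $c(\tau^2+|\sigma-e_1|^2)^{(N+\alpha)/2}$ on the denominator. The only (immaterial) difference is that you check integrability by polar coordinates in $(\tau,|\sigma-e_1|)$, whereas the paper rescales $\tau=|\sigma-e_1|\,t$ and factors the integral into a spherical factor times a $t$-integral; both yield the same exponent count and use $\beta>\alpha$ and $\alpha<1$ in the same way.
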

\begin{proof}
Starting from the absolutely converging integral representation \eqref{geometric}, we deduce that 
\begin{align}
  -\frac{\alpha}{2}  H({u})(s)& = -\frac{\alpha}{2} H_{E_{u}}( F_{u}(s,\th ))\nonumber\\
&=  \int_{S^{N-2}} \int_{\R} \frac{\left\{  F_{u}(s,\th)-F_{u}(\bar{s},\s)    \right\} \cdot\nu_{E_{u}}(F_{u}(\bar{s},\s))  }{ |F_{u}(s,\th)-F_{u}(\bar{s},\s)|^{N+\alpha}}  J_{F_{u}}(\bar{s},\s)\,d \bar s d \sigma, \label{eq-1-H(u)}
 \end{align}
where the unit outer normal of   $\de E_{u}$ at the point ${F_{u}}(\bar{s},\s)$ is given by
 $$
\nu_{E_{u}}(F_{u}(\bar{s},\s))=\frac{1}{\sqrt{1+({u}')^2(\bar{s})}}(-{u}'(\bar{s}),\s)\qquad \textrm{ for } \bar{s}\in\R,\, \s\in S^{N-2},
 $$
and
 $$
 J_{F_{u}}(\bar{s},\s)=\sqrt{{u}^{2(N-2)}(\bar{s})(1+({u}')^2(\bar{s}))}={u}^{N-2}(\bar{s}) \sqrt{1+({u}')^2(\bar{s})}  \quad \textrm{ for } \bar{s}\in\R,\, \s\in S^{N-2}.
 $$
We also note that for $s, \bar s \in \R$ and $\th, \s \in S^{N-2}$ we have  
\begin{align*}
|F_{u}(s,\th)-F_{u}(\bar{s},\s)|^2 &= (s-\bar s)^2 + | u(s)\th - u(\bar s) \s|^2\\
& = (s-\bar{s})^2+ (u(s)-u(\bar s))^2 +2 u(s)u(\bar s)(1-   \theta \cdot \sigma ) 
\end{align*}
and 
\begin{align*}
\left\{  F_{u}(s,\th)-F_{u}(\bar{s},\s)    \right\} \cdot\nu_{E_{u}}(F_{u}(\bar{s},\s)) &= \frac{-(s-\bar s)u'(\bar s)+(u(s)\th - u(\bar s) \s) \cdot \s }{\sqrt{1+({u}')^2(\bar{s})}}\\
&=  \frac{u(s)-{u}(\bar{s})-(s-\bar s) u'(\bar s)- u(s)(1-\th \cdot\sigma)}{\sqrt{1+({u}')^2(\bar{s})}}.
\end{align*}
Inserting these identities in \eqref{eq-1-H(u)}, we obtain 
 \begin{align*}
-\frac{\alpha}{2}&  H({u})(s)=\int_{S^{N-2}} \int_{\R}\frac{\left\{u(s)-{u}(\bar{s})-(s-\bar s) u'(\bar s)- u(s)(1-\th \cdot\sigma)\right\} u^{N-2}(\bar s)}{\{(s-\bar{s})^2+ (u(s)-u(\bar s))^2 +2 u(s)u(\bar s)(1- \theta \cdot \sigma  )\}^{\frac{N+\alpha}{2}}}d\bar{s} d\s\\
&=\int_{S^{N-2}} \int_{\R}\frac{\left\{u(s)-{u}(\bar{s})-(s-\bar s) u'(\bar s)- u(s)(1-\sigma_1)\right\} u^{N-2}(\bar s)}{\{(s-\bar{s})^2+ (u(s)-u(\bar s))^2 +2 u(s)u(\bar s)(1- \sigma _1)\}^{\frac{N+\alpha}{2}}}d\bar{s} d\s\\
&=\int_{S^{N-2}} \int_{\R}\frac{\left\{u(s)-{u}(s-\tau)-\tau u'(s-\tau)- u(s)(1-\sigma_1)\right\} u^{N-2}(s-\tau)}{\{\tau^2+ (u(s)-u(s-\tau))^2 +2 u(s)u(s-\tau)(1- \sigma _1)\}^{\frac{N+\alpha}{2}}}d\tau d\s.
 \end{align*}
Here, for the second equality, we note that the rotation invariance of the spherical integral allows to choose $\th = e_1 \in S^{N-2}$, whereas the third equality follows from the change of variable $\tau= s-\bar s$. 

Since $1-\sigma_1 = \frac{|\sigma-e_1|^2}{2}$ for $\sigma \in S^{N-2}$, the assertion of the lemma now follows once we have shown that both integrals in \eqref{new-rep-H} converge absolutely in the Lebesgue sense. To prove this, we first note that
\begin{align*}
| u(s)-{u}(s-\tau)-\tau u'(s-\tau) | &\leq |\tau| \int_0^1 |u'(s-\rho \tau)-u'(s-\tau)|\,d\rho \\
&\le  2 \|u\|_{C^{1,\b}(\R)}\,\min( |\tau|^{1+\b}, |\tau|) 
 \end{align*}
for $s,\tau \in \R$. Using this, we get   
\begin{align*}
&\int_{S^{N-2}} \int_{\R}\frac{|u(s)-{u}(s-\tau)-\tau u'(s-\tau)| u^{N-2}(s-\tau)}{\{\tau^2+ (u(s)-u(s-\tau))^2 + u(s)u(s-\tau)|\s-e_1 |^2\}^{\frac{N+\alpha}{2}}}d\tau d\s\\
&\le 2 \|u \|_{C^{1,\b}(\R)}^{N-1} \int_{S^{N-2}} \int_{\R}\frac{\min( |\tau|^{1+\b}, |\tau|)}{\{\tau^2+ (u(s)-u(s-\tau))^2 + u(s)u(s-\tau)|\s-e_1 |^2\}^{\frac{N+\alpha}{2}}}d\tau d\s\\
&\le 2 \|u \|_{C^{1,\b}(\R)}^{N-1} \int_{S^{N-2}} \int_{\R}\frac{\min( |\tau|^{1+\b}, |\tau|)}{(\tau^2+ \delta^2 |\s-e_1 |^2)^{\frac{N+\alpha}{2}}}d\tau d\s 
\end{align*}
with $\delta := \inf_{\R} u>0$. Since $N \ge 3$, the change of variable $\tau =|\s-e_1 | t$ now leads to 
\begin{align*}
\int_{S^{N-2}} \int_{\R}\frac{|\tau|^{1+\b}}{(\tau^2+ \delta^2 |\s-e_1 |^2)^{\frac{N+\alpha}{2}}}d\tau d\s
&= \int_{S^{N-2}} \frac{d\s}{|\s-e_1 |^{N+\a-2-\b}} \int_{\R} 
\frac{|t|^{1+\b}}{(t^2+\delta^2)^{\frac{N+\alpha}{2}}}dt < \infty.  
\end{align*}
Hence the first integral in \eqref{new-rep-H} converges absolutely. 

To see the absolute convergence of the second integral in \eqref{new-rep-H}, we again use the change of variable $\tau =|\s-e_1 | t$ to obtain the estimate
\begin{align*}
\int_{S^{N-2}} &\int_{\R}\frac{|\s-e_1|^2 u^{N-2}(s-\tau)}{\{\tau^2+ (u(s)-u(s-\tau))^2 + u(s)u(s-\tau)|\s-e_1|^2\}^{\frac{N+\alpha}{2}}}d\tau d\s\\
&\le \|u \|_{C^{1,\b}(\R)}^{N-2} \int_{S^{N-2}} \int_{\R}\frac{|\s-e_1|^2}{(\tau^2+ \delta^2 |\s-e_1|^2)^{\frac{N+\alpha}{2}}}d\tau d\s\\
&= \|u \|_{C^{1,\b}(\R)}^{N-2}   \int_{S^{N-2}}\frac{d\s}{|\s-e_1 |^{N+\a-3}}\, \int_{\R} \frac{dt }{    (t^2+ \d^2)^{\frac{N+\a}{2}} } <\infty.
\end{align*}
The proof is finished.
\end{proof}

To prove the smoothness of the nonlocal mean curvature operator between appropriate H\"older spaces, 
it will be crucial to make a further transformation in the expression of  
$H$ found in the previous Lemma~\ref{lem:geomNMC}. To describe this, we first introduce some notation. We denote
$$ 
p_\s:=|\s-e_1|,
$$
and  for $r\in\R$, we define 
$$
\mu_r(\s)= \frac{1}{|\s-e_1|^{N+r}} = p_\s^{-N-r}.
$$
We define the  maps $\L_0, \L: C^{1,\b}(\R)\times\R\times\R\times \R\to \R $ by   
$$
\L_0(\phi,s,t,p) = \frac{\phi(s)-\phi(s-pt)}{pt} = \int_0^1 \phi'(s-\rho p t) d\rho 
$$
and 
$$
\L(\phi, s,t, p)= \L_0(\phi,s,t,p)-  \phi'(s-pt) =  \int_0^1(\phi'(s-\rho p t)-\phi'(s-p t) ) d\rho.
$$

Recalling that  $ 2(1-\s\cdot  e_1 )=|\s- e_1|^2=p_\s^2$, we make the change of variables 
$$
 t=\frac{ \t}{   |\s-e_1|}= \frac{ \t}{   p_\s}
$$
in the expression  for $H({u})$ in Lemma \ref{lem:geomNMC}. We immediately obtain

\begin{lemma}\label{Hreg}
With the notation above, for $u\in\mathcal{O}$, we have
\begin{align}\label{eq:good-expression-NMC}
-\frac{\a}{2} H({u})(s)
=&\int_{S^{N-2}}\mu_{\a-2}(\s) \int_{\R}  t\L ({u}, s,t, p_\s)\cK_\a({u}, s,t,p_\s){u}^{N-2}(s-p_\s t) dt  d\s  \\
&\hspace{5mm}- \frac{{u} (s)}{2}    \int_{S^{N-2}}\mu_{\a-3}(\s) \int_{\R}  \cK_\a({u},s,t,p_\s){u}^{N-2}(s-p_\s t) dt  d\s \nonumber, 
\end{align}
where the function  $\cK_\alpha: C^{1,\b}(\R)\times\R\times\R\times\R\to \R$ is defined by 
\begin{equation*}
\cK_\alpha({u},s,t,p)=\frac{1}{\left( \displaystyle t^2+ t^2 \L_0(u,s,t,p)^2+{u}(s){u}(  s-p t)  \right)^{(N+\alpha)/2}}.
\end{equation*}
Moreover, when $N\geq 3$ the two integrals above converge absolutely in the Lebesgue sense.
\end{lemma}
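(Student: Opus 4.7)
The plan is to derive \eqref{eq:good-expression-NMC} directly from \eqref{new-rep-H} by performing, for each fixed $\s\in S^{N-2}$, the one-dimensional change of variables $\t = p_\s\, t$ (with $d\t = p_\s\, dt$) inside the $\t$-integral. Since this change of variables is affine and invertible for each $\s\neq e_1$, the absolute convergence proved in Lemma~\ref{lem:geomNMC} (for $N\geq 3$) transfers verbatim to the new integrals; hence the only real task is an algebraic rearrangement.

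First, I would rewrite the common denominator in \eqref{new-rep-H} after the substitution. Using $\t^2 = p_\s^2 t^2$, $|\s-e_1|^2 = p_\s^2$, and
\[
\bigl(u(s)-u(s-p_\s t)\bigr)^2 = (p_\s t)^2\, \L_0(u,s,t,p_\s)^2,
\]
the bracket factors cleanly as $p_\s^{2}\bigl(t^2 + t^2\L_0(u,s,t,p_\s)^2 + u(s)u(s-p_\s t)\bigr)$, so the full denominator becomes $p_\s^{N+\a}$ times $1/\cK_\a(u,s,t,p_\s)$.

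Next I would handle the two numerators separately. In the first integral, the key identity is
\[
u(s)-u(s-p_\s t)-p_\s t\, u'(s-p_\s t) = p_\s t\bigl(\L_0(u,s,t,p_\s)-u'(s-p_\s t)\bigr) = p_\s t\, \L(u,s,t,p_\s),
\]
so that combining the factor $p_\s t$ from the numerator with the Jacobian $p_\s$ and dividing by $p_\s^{N+\a}$ yields the overall weight $p_\s^{2-N-\a} = \mu_{\a-2}(\s)$, matching the first term of \eqref{eq:good-expression-NMC}. In the second integral, the explicit factor $|\s-e_1|^2 = p_\s^2$ together with the Jacobian $p_\s$ and the factor $p_\s^{-N-\a}$ from the denominator produces the weight $p_\s^{3-N-\a} = \mu_{\a-3}(\s)$, matching the second term.

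I do not foresee any serious obstacle beyond careful bookkeeping: the only subtlety is correctly distinguishing $\L$ from $\L_0$ in the first numerator, and observing that the joint integrability of $\mu_{\a-2}(\s)$ and $\mu_{\a-3}(\s)$ with the inner $t$-integrals near $\s=e_1$ is already encoded in the absolute convergence of \eqref{new-rep-H} proved in Lemma~\ref{lem:geomNMC}; thus no new estimate needs to be redone here.
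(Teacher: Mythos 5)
Your proposal is correct and follows essentially the same route as the paper: Lemma~\ref{Hreg} is obtained from \eqref{new-rep-H} exactly by the change of variables $\t = p_\s t$ for each fixed $\s \neq e_1$, with the absolute convergence for $N\geq 3$ inherited from Lemma~\ref{lem:geomNMC}. The bookkeeping you spell out (recognizing $p_\s t\,\L(u,s,t,p_\s)$ in the first numerator and collecting the powers of $p_\s$ into the weights $\mu_{\a-2}(\s)$ and $\mu_{\a-3}(\s)$) is precisely what the paper's one-line proof leaves implicit.
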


%

We point out a very important and useful difference between this last expression for $H$ and that of Lemma~\ref{lem:geomNMC}.
In \eqref{eq:good-expression-NMC} the dependence on the variable $|\sigma-e_1|=p_\sigma=p$ appears ``inside the 
known variables for $u$'', that is, through $u(s-pt)$. This will allow us to establish in the 
following Proposition~\ref{lem:diff-H} a fundamental result on the smoothness of the nonlocal mean curvature map $H$. The result also states an
expression for the differential of $H$ at a constant function 
(recall that a constant function corresponds to a straight cylinder in $\R^N$). 
The result will be proved further on in Section \ref{sec:preparations-1} (see Propositions~\ref{prop:smooth-ovH} 
and~\ref{cor-1st-der}) using the previous expression \eqref{eq:good-expression-NMC}. 
Recall the definition of the set $\calO \subset C^{1,\beta}(\R)$ defined in (\ref{eq:def-cO}).

\begin{proposition}\label{lem:diff-H} 
For  $N\geq 3$, the map $H: \calO \subset C^{1,\beta}(\R) \to C^{0, \b-\a}(\R)$ is of class $C^{\infty}$. In addition, if $\kappa \in \calO$ is a constant function, then we have 
 \be\label{eq:dHuv}
 DH(\kappa)v(s)= \kappa^{-1-\alpha}\Bigl(  PV \int_{\R} (v(s)-v(s-\kappa \tau)) G_\alpha(\tau)\,d\tau - b_\alpha v(s)\Bigr)  
  \ee
for $v \in C^{1,\beta}(\R)$, where 
$$
G_\alpha: \R \setminus \{0\} \to \R, \qquad G_\alpha(\tau)= 2\int_{S^{N-2}}\frac{1}{(\tau^2+2(1-\s_1))^{\frac{N+\a}{2}}}\,d\s
$$
and 
$$
b_\alpha = 2 \int_{\R} \int_{S^{N-2}} \frac{1-\sigma_1}{(\tau^2  + 2(1-\sigma_1))^{\frac{N+\alpha}{2}}}\,d\s d\tau.
$$
\end{proposition}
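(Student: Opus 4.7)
The plan is to prove the two assertions separately, using the two equivalent expressions for $H$ available to us.

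For the $C^\infty$ regularity of $H \colon \calO \to C^{0,\beta-\alpha}(\R)$, I would work directly from \eqref{eq:good-expression-NMC}, exploiting the structural observation highlighted just before the statement: the argument $u$ enters only through the evaluations $u(s)$, $u(s-p_\sigma t)$, and $u'(s-\rho p_\sigma t)$ (the latter inside $\Lambda$ and $\Lambda_0$), composed with the universal smooth real function $x\mapsto x^{-(N+\alpha)/2}$, whose argument stays bounded away from zero on $\calO$. Each such ingredient is a smooth Nemytskii/composition operator between appropriate H\"older spaces; the remaining point is that the product against the singular density $\mu_{\alpha-2}(\sigma)\,dt$ (resp.\ $\mu_{\alpha-3}(\sigma)\,dt$) defines a smooth map into $C^{0,\beta-\alpha}(\R)$. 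Via term-wise differentiation under the integral sign this reduces to producing uniform integrable majorants of the same type exhibited in the proof of Lemma \ref{Hreg}; the detailed work is carried out in the forthcoming Proposition~\ref{prop:smooth-ovH}.

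For the derivative formula \eqref{eq:dHuv} at a constant function $\kappa$, I would instead linearize the more transparent representation \eqref{new-rep-H}. Write $-\tfrac{\alpha}{2}H(u)=I_1(u)+I_2(u)$ and set $D_0:=\tau^2+\kappa^2 p_\sigma^2$. In $I_1$ the outer factor $u(s)-u(s-\tau)-\tau u'(s-\tau)$ vanishes identically at $u=\kappa$, so only its differential in the direction $v$ survives, giving
\[
DI_1(\kappa)v(s)=\kappa^{N-2}\int_{S^{N-2}}\!\!\int_{\R}\frac{v(s)-v(s-\tau)-\tau v'(s-\tau)}{D_0^{(N+\alpha)/2}}\,d\tau\,d\sigma.
\]
For $I_2$ one applies the product rule to the three $u$-dependences, namely the prefactor $u(s)$, the factor $u^{N-2}(s-\tau)$, and the denominator $D(u)^{(N+\alpha)/2}$. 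At $u=\kappa$ the piece $(u(s)-u(s-\tau))^2$ of $D(u)$ vanishes to second order, so only the $u(s)u(s-\tau)p_\sigma^2$ piece contributes to the linearization of the denominator, producing the symmetric combination $v(s)+v(s-\tau)$.

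The final step is algebraic. I would integrate by parts in $\tau$ to eliminate $v'(s-\tau)$ via $v'(s-\tau)=-\partial_\tau v(s-\tau)$, and then use the elementary identity $\tau^2=D_0-\kappa^2 p_\sigma^2$ to rewrite the resulting factors of $\tau^2 D_0^{-(N+\alpha)/2-1}$ as combinations of $D_0^{-(N+\alpha)/2}$ and $p_\sigma^2 D_0^{-(N+\alpha)/2-1}$. Careful bookkeeping should show that the ``extra'' contributions from $I_2$ combine with those from $I_1$ so that only two survivors remain: a convolution-type kernel acting on the difference $v(s)-v(s-\tau)$, and a pure multiplication term $-b_\alpha v(s)$. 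The change of variable $\tau=\kappa\tau'$ then factors out $\kappa^{1-N-\alpha}$, which together with the outer $\kappa^{N-2}$ produces the expected $\kappa^{-1-\alpha}$, and the spherical integral over $S^{N-2}$ yields exactly $G_\alpha(\tau)$ and $b_\alpha$ as defined. The main obstacle I anticipate is precisely this reshuffling, rather than the smoothness part: each of the individual integrals appearing after the product-rule expansion is only conditionally convergent, with non-integrable singularities both at $\tau=0$ and at $\sigma=e_1$, and the PV integrability of the final kernel emerges only after using the parity symmetry $\tau\mapsto-\tau$ (which kills odd contributions in $\tau$) and several cancellations between the four pieces coming from $I_1$ and from the three factors in $I_2$. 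Tracking signs and powers of $\kappa$ through this combinatorial step is where the genuine bookkeeping lies.
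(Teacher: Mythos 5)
Your overall route is the paper's route in slightly different clothes: since \eqref{eq:good-expression-NMC} is just \eqref{new-rep-H} after the change of variables $t=\tau/p_\sigma$, the four pieces you isolate (the single surviving term from $I_1$, and the three product-rule terms from $I_2$, with $(u(s)-u(s-\tau))^2$ dropping out at $u\equiv\kappa$), followed by the integration by parts in $\tau$ and the rescaling $\tau=\kappa\tau'$, are exactly the terms manipulated in Proposition~\ref{cor-1st-der}; likewise your smoothness plan (composition with $x\mapsto x^{-(N+\alpha)/2}$, differentiation under the integral with uniform majorants) is precisely what Lemmas~\ref{lem:est-cK}--\ref{prop:smooth-ovH-0} and Proposition~\ref{prop:smooth-ovH} carry out, and you in fact defer to that proposition rather than prove it. So the issue is not the choice of representation but the final step, which you leave as ``careful bookkeeping''.

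That step contains a genuine gap: the cancellation of the non-difference contributions in $v(s-\kappa\tau)$ cannot be produced by the tools you invoke (parity $\tau\mapsto-\tau$, the identity $\tau^2=D_0-\kappa^2p_\sigma^2$, and $\tau$-integration by parts). After the integration by parts and rescaling, the total coefficient of $v(s-\kappa\tau)$ beyond the difference term is, in the notation of the proof of Proposition~\ref{cor-1st-der},
\begin{equation*}
(N-2)\,\widetilde G_\alpha(\tau)-\tfrac{N-2}{2}\,G_{\alpha,0}(\tau)-(N+\alpha)\bigl\{G_{1,\alpha}(\tau)-\tfrac{1}{4}G_{\alpha,2}(\tau)\bigr\},
\end{equation*}
and since $v(s-\kappa\tau)$ depends on $\tau$ this must vanish \emph{pointwise in $\tau$}, not merely after integration in $\tau$. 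This is the identity \eqref{eq:S-N-2-identity}, whose proof uses $p_\sigma^2-\tfrac{1}{4}p_\sigma^4=1-\sigma_1^2$ and an integration by parts in $\sigma_1$ over $S^{N-2}$ --- a spherical ingredient absent from your plan and not a parity effect. In addition, because several of the post-product-rule integrals are only conditionally convergent near $\tau=0$, the $\eps$-truncation must be carried through the $\tau$-integration by parts; disposing of the leftover $v(s)$-terms and boundary terms requires the further identity obtained by taking $v\equiv 1$ in that integration by parts (formula \eqref{eq:add-identity}) together with the estimate $2v(s)-v(s-\kappa\eps)-v(s+\kappa\eps)=O(\eps^{1+\beta})$ against $\widetilde G_\alpha(\eps)=O(\eps^{-2-\alpha})$. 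Without these two identities and the boundary-term estimate, the reshuffling you anticipate does not close, and the formula \eqref{eq:dHuv} is not reached.
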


Expression \eqref{eq:dHuv} is rather simple and this will be crucial in order to analyze 
the associated linearized operator. We prove \eqref{eq:dHuv} in all detail in Proposition~\ref{cor-1st-der}
using expression \eqref{eq:good-expression-NMC}. The proof, however, involves collecting several groups
of terms that could not have been guessed without knowing apriori the expression \eqref{eq:dHuv} that we want to establish.
Indeed, we deduce the expression \eqref{eq:dHuv} for the linearized operator at a constant function from  another, very different, expression for the NMC operator $H$ that we describe next.

\subsection{A third expression for the NMC operator: finding the linearized operator}
The following is another formula for $H$. We present it here only to show how we found 
expression \eqref{eq:dHuv} for the linearized operator. It will not be used in any proof of the paper.
 
The  nonlocal mean curvature $H_{E_u}$
at a point $(s,u(s)\th )$, with $\th\in S^{N-2}$, does not depend on $\theta$ and is given by 
\be\label{eq:NMC-varphi}
H(u)(s)= u(s)^{-1-\alpha} \int_\R \left\{
2 I\left( \frac{u(\bar{s} )}{u(s)},\frac{s-\bar{s}}{u(s)}  \right)
-I\left( 0, \frac{s-\bar{s}}{u(s)}  \right)
\right\}  d\bar{s} ,
\ee 
where
$$
I(q,p):=\int_q^{+\infty} \int_{S^{N-2}}  \frac{\tau^{N-2}}{(p^2+1+\tau^2-2\sigma_1\tau )^{(N+\alpha)/2}}\,
d\sigma d\tau.
$$

Since we will not use \eqref{eq:NMC-varphi} in any proof of the paper, we merely sketch a proof of this formula 
without looking in detail at the convergence of integrals. 
We define $\mu(s,\z) =|(s,\z)|^{-N-\a}= (s^2 +|\z|^2)^{-\frac{N+\alpha}{2}}$ for $(s,\z) \in \R \times \R^{N-1}$. 
Let $s \in \R$ and $\th\in S^{N-2}$. At the point $(s, u(s)\theta) \in \de E_{u}$, we then have 
$$
H(u)(s)=\int_{\R}\int_{\R^{N-1}}\left\{1_{ E_{u}^c}(\bar{s},\z)-1_{ E_{u}}(\bar{s},\z) \right\}\mu\left( (s,\th u (s))-
(\bar{s},\z)\right) \, d\z
d\bar{s}.
$$
Using polar coordinates, we get  
$$
 H(u)(s)=\int_{\R}\int_{S^{N-2}} \left( \int_{u(\bar{s})}^{+\infty}-\int_0^{u(\bar{s})} \right)
 r^{N-2}\mu( (s,\th u (s))-(\bar{s},r\s))\, dr d\s d\bar{s}.
$$
By direct computation, we have
$$
\mu\left( (s,\th u (s))-(\bar{s},r\s)\right) =\left(  (s-\bar{s})^2+u^2(s)+r^2-2ru(s)\s\cdot\th \right)^{-\frac{N+\a}{2}}.
$$
Using this, we can see that the integral
$\int_{S^{N-2}} \mu ( (s,\th u (s))-(\bar{s},r\s)) \,d\s$
is independent of $\th$. Hence we may assume that $\th=e_1$. We have
\begin{align*}
H(u)(s)&=\int_{\R}\int_{S^{N-2}} \int_{u(\bar s)}^{+\infty}\frac{r^{N-2}}{\left(  (s-\bar{s})^2+u^2(s)+r^2-2ru(s)\s_1 \right)^{\frac{N+\a}{2}}} \, dr d\s d\bar{s}\\
& \hspace{10mm}-
\int_{\R}\int_{S^{N-2}}   \int_0^{u(\bar{s})}  \frac{r^{N-2}}{\left(  (s-\bar{s})^2+u^2(s)+r^2-2ru(s)\s_1 \right)^{\frac{N+\a}{2}}}\, dr d\s d\bar{s}\\
&= \int_{\R}\int_{S^{N-2}}2 \int_{u(\bar{s})}^{+\infty} \frac{r^{N-2}}{\left(  (s-\bar{s})^2+u^2(s)+r^2-2ru(s)\s_1 \right)^{\frac{N+\a}{2}}} \, dr d\s d\bar{s}\\
& \hspace{10mm}-
 \int_{\R}\int_{S^{N-2}}   \int_0^{+\infty}  \frac{r^{N-2}}{\left(  (s-\bar{s})^2+u^2(s)+r^2-2ru(s)\s_1 \right)^{\frac{N+\a}{2}}}\, dr d\s d\bar{s}.
\end{align*}
By making the change of variable $r=u(s)\t$, we get \eqref{eq:NMC-varphi}. 

We next find the simple expression for the linearized operator given in \eqref{eq:dHuv}. Taking $u \equiv \kappa \in \cO$ a constant function, using \eqref{eq:NMC-varphi}, calling $t=s-\bar{s}$,
and denoting the partial derivatives of $I$ by $I_q$ and $I_p$,
we have 
\begin{align*}
\frac{d }{d\e} H({\kappa+\e v})\Big|_{\e=0}(s)= & -(1+\a)\kappa^{-2-\a}v(s)\int_{\R}\left\{ 
2I\left(1,\frac{t}{\kappa}\right)-I\left(0,\frac{t}{\kappa}\right)  \right\}dt\\
&+ \kappa^{-2-\a}\int_{\R}
  2 I_q\left(1,\frac{t}{\kappa}\right) \left( v(s-t)-v(s) \right) dt\\
  &+ \kappa^{-2-\a}v(s)\int_{\R}\left\{ 2I_p\left(1,\frac{t}{\kappa}\right)
  - I_p\left(0,\frac{t}{\kappa}\right) \right\}\frac{-t}{\kappa} dt.
\end{align*}
Using that $I_p\left(\cdot,\frac{t}{\kappa}\right)\frac{1}{\kappa}=
\de_t I(\cdot,\frac{t}{\kappa})$ and integrating by parts the third line in the 
previous expression, we find
\begin{align*}
\frac{d }{d\e} H({\kappa+\e v})\Big|_{\e=0}(s)= & -\a \kappa^{-2-\a}v(s)\int_{\R}
\left\{ 2I\left(1,\frac{t}
{\kappa}\right)-I\left(0,\frac{t}{\kappa}\right)  \right\}dt\\
&+ \kappa^{-2-\a}\int_{\R}\
  2 I_q\left(1,\frac{t}{\kappa}\right) \left( v(s-t)-v(s) \right) dt\\
&- \kappa^{-2-\a}v(s) \left[ t 2  I\left(1,\frac{t}{\kappa} \right) 
  -  t I\left(0,\frac{t}{\kappa}\right)  \right]_{t=-\infty}^{t=+\infty}\\
  =& -\a \kappa^{-2-\a}v(s)\int_{\R}
\left\{ 2I\left(1,\frac{t}
{\kappa}\right)-I\left(0,\frac{t}{\kappa}\right)  \right\}dt\\
&+ \kappa^{-2-\a}\int_{\R}\
  2I_q\left(1,\frac{t}{\kappa}\right) \left( v(s-t)-v(s) \right) dt.
\end{align*}
Making the change of variable $\tau=\frac{t}{\kappa}$, we get 
\begin{align*}
\frac{d }{d\e} H({\kappa+\e v})\Big|_{\e=0}(s)= 
& -\a \kappa^{-1-\a}v(s)\int_{\R}
\left\{ 2I\left(1,\t\right)-I\left(0,\t\right)  \right\}d\t\\
&+ \kappa^{-1-\a}\int_{\R}\
  2I_q\left(1,\t\right) \left( v(s-\kappa\t)-v(s) \right) d\t\\
 =& \kappa^{-1-\a}\int_{\R}
 G_\alpha(\tau) \left( v(s)-v(s-\kappa\tau) \right) d\tau -\kappa^{-1-\a} b_\a v(s),
\end{align*} 
since, by \eqref{eq:NMC-varphi}, $\int_\R \{ 2I(1,\t)-I(0,\t)\} d\t=H(1)$ and on the other hand, by  \eqref{new-rep-H}, 
$\alpha H(1)= \int_{S^{N-2}}   \int_{\R}
|\sigma-e_1|^2\{\t^2+|\sigma-e_1|^2 \}^{-(N+\a)/2} d\t d\s=b_\alpha$. We have also used that
$$
2I_q(1,\t)= -2 \int_{S^{N-2}}  \frac{1}{(\t^2+2(1-\sigma_1) )^{(N+\alpha)/2}}\,
d\sigma =-G_\a(\t).
$$
Thus, we have obtained the expression \eqref{eq:dHuv} for the linearized operator.

\subsection{Preliminary estimates on the linearized operator}
The following lemma provides estimates for the function $G_\alpha$ appearing in Proposition~\ref{lem:diff-H}. 
\begin{lemma}\label{lem:G_1}
Let  $N\geq 3$ and $\a>0$. Then there exists a positive constant $C$ depending only on $N$ and $\a$ such that 
\be \label{eq:estG1-leq}
G_{\a}(\tau)\leq \, C \min \left\{  |\tau|^{-2-\a},|\tau|^{-N-\a} \right\} \qquad \text{for $\tau\neq 0$}.
\ee
Moreover, we have 
\begin{equation}
  \label{eq:representation-G}
G_{\a}(\tau) = |\tau|^{-2-\a}g(\tau^2) \qquad \text{for $\tau \not = 0$},
\end{equation}
where $g:(0,+\infty) \to \R$ is a bounded function and it is given by 
\be \label{eq:def-g}
g(\rho)= 2  C_N   
\int_0^{2/\rho}\frac{(t(2-\rho t))^{\frac{N-4}{2}}}{\left( 1+2 t \right)^{\frac{N+\a}{2}}}\, dt 
\qquad \text{for $\rho>0$,}
\ee 
with $C_N=\frac{2\pi^{(N-2)/2}}{\G((N-2)/2)}$. 

Furthermore, we have 
\begin{equation}
  \label{def-g-0}
g_0:=  C_N'  \int_0^{+\infty}\frac{t^{\frac{N-4}{2}}}{\left( 1+2 t \right)^{\frac{N+\a}{2}}}\, dt = \lim_{\rho \to 0^+} g(\rho)   
\end{equation}
with $C_N'=2^{\frac{N-2}{2}}C_N =  2^{\frac{N-2}{2}}  \frac{2\pi^{(N-2)/2}}{\G((N-2)/2)}$, and 
\be\label{eq:gr2-g0}
|g(\rho )-g_0|\leq C  \rho  \qquad \text{for $\rho\in (0,1) $}
\ee
for some constant $C>0$. 
\end{lemma}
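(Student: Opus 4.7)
The plan is to reduce $G_\alpha$ to a one-dimensional integral via spherical coordinates on $S^{N-2}$ and then rescale. Using the identity $\int_{S^{N-2}}f(\sigma_1)\,d\sigma = C_N \int_{-1}^1 f(s)(1-s^2)^{(N-4)/2}\,ds$ with $C_N=\frac{2\pi^{(N-2)/2}}{\Gamma((N-2)/2)}$, one obtains
\begin{equation*}
G_\alpha(\tau) = 2C_N \int_{-1}^1 \frac{(1-s^2)^{(N-4)/2}}{(\tau^2+2(1-s))^{(N+\alpha)/2}}\,ds.
\end{equation*}
Then the substitution $t=(1-s)/\tau^2$ factors out the $\tau$-dependence: one finds $1-s^2 = \tau^2 t(2-\tau^2 t)$ and $\tau^2+2(1-s) = \tau^2(1+2t)$, which after collecting powers of $|\tau|$ yields the representation (\ref{eq:representation-G}) with $g$ exactly as in (\ref{eq:def-g}).

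Second, I would prove $g$ is bounded on $(0,\infty)$. For $N\ge 4$ the exponent $(N-4)/2\ge 0$ gives the pointwise bound $(t(2-\rho t))^{(N-4)/2}\le (2t)^{(N-4)/2}$, which immediately implies $g(\rho)\le g_0<\infty$; finiteness of $g_0$ follows from the $t^{-2-\alpha/2}$ decay of the integrand at infinity. For $N=3$ the integrand carries singularities at both $t=0$ and $t=2/\rho$, so I would split at $t=1/\rho$: on $(0,1/\rho)$ the factor $(2-\rho t)^{-1/2}\le\sqrt{2}$ is bounded, while on $(1/\rho,2/\rho)$ the smallness $(1+2t)^{-(3+\alpha)/2}\le C\rho^{(3+\alpha)/2}$ combined with $\int_{1/\rho}^{2/\rho}(2-\rho t)^{-1/2}\,dt = O(1/\rho)$ produces a contribution of order $\rho^{1+\alpha/2}$. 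The representation together with boundedness of $g$ yields $G_\alpha(\tau)\le C|\tau|^{-2-\alpha}$; coupled with the elementary estimate $G_\alpha(\tau)\le C|\tau|^{-N-\alpha}$ arising from $\tau^2+2(1-\sigma_1)\ge\tau^2$, this gives (\ref{eq:estG1-leq}) by taking the minimum.

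The limit (\ref{def-g-0}) then follows by dominated convergence: the integrand in (\ref{eq:def-g}), extended by zero beyond $2/\rho$, converges pointwise to $(2t)^{(N-4)/2}(1+2t)^{-(N+\alpha)/2}$ as $\rho\to 0^+$ and is dominated uniformly in $\rho$ by the same function, using the estimates above. Finally, for the rate estimate (\ref{eq:gr2-g0}) I would split
\begin{equation*}
g(\rho)-g_0 = 2C_N\!\int_0^{2/\rho}\!\frac{(2t)^{(N-4)/2}\bigl[(1-\rho t/2)^{(N-4)/2}-1\bigr]}{(1+2t)^{(N+\alpha)/2}}\,dt - 2C_N\!\int_{2/\rho}^\infty\!\frac{(2t)^{(N-4)/2}}{(1+2t)^{(N+\alpha)/2}}\,dt.
\end{equation*}
The tail integral is $O(\rho^{1+\alpha/2})\subset O(\rho)$ for $\rho<1$. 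For the main integral I split again at $t=1/\rho$: on $(0,1/\rho)$ the elementary bound $|(1-x)^\gamma-1|\le C_\gamma|x|$ for $|x|\le 1/2$ gives $|(1-\rho t/2)^{(N-4)/2}-1|\le C\rho t$, yielding an $O(\rho)$ contribution after using the convergence of $\int_0^\infty t^{(N-2)/2}(1+2t)^{-(N+\alpha)/2}\,dt$; on $(1/\rho,2/\rho)$ a direct computation paralleling the boundedness step produces an $O(\rho^{1+\alpha/2})$ contribution. The main technical obstacle throughout is the case $N=3$, where the singular endpoint weight $(2-\rho t)^{-1/2}$ blocks straightforward domination and forces the explicit splitting and direct estimation described above.
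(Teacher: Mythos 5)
Your route is essentially the paper's: the same reduction of the spherical integral to a one\--dimensional integral in $\sigma_1$, the same rescaling producing \eqref{eq:representation-G}--\eqref{eq:def-g}, the same crude bound $G_\alpha(\tau)\le C|\tau|^{-N-\alpha}$, and the same splitting of the delicate regime at $t=1/\rho$ versus $t\in(1/\rho,2/\rho)$. The only methodological differences are cosmetic: for the rate estimate \eqref{eq:gr2-g0} the paper writes $2^{\frac{N-4}{2}}-(2-\rho t)^{\frac{N-4}{2}}$ as an integral in an auxiliary parameter $\varrho$ and then splits, while you use the elementary bound $|(1-x)^{\gamma}-1|\le C_\gamma |x|$ on $[0,1/2]$ plus a direct window estimate, which does the same job a bit more directly; and the paper deduces the limit \eqref{def-g-0} from \eqref{eq:gr2-g0}, whereas you invoke dominated convergence.

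Two points need repair, both in the case $N=3$. First, your domination claim for \eqref{def-g-0} is not literally true: on the moving window $t\in(1/\rho,2/\rho)$ the integrand blows up like $(2-\rho t)^{-1/2}$ and is not majorized by the limit integrand $(2t)^{-1/2}(1+2t)^{-(3+\alpha)/2}$; your own splitting repairs this (dominated convergence on $\{t<1/\rho\}$, where $2-\rho t\ge 1$, plus the $O(\rho^{1+\alpha/2})$ bound on the window), so you should say that explicitly. Second, and more substantively, your argument does not establish that $g$ is bounded on all of $(0,\infty)$ when $N=3$, which is part of the statement and is used later with arguments of $g$ ranging over all of $(0,\infty)$ (in Lemmas \ref{lem:eigen-monoton} and \ref{lem:Regularity}). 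Your window estimate rests on $(1+2t)^{-(3+\alpha)/2}\le C\rho^{(3+\alpha)/2}$, so the resulting bound $C\rho^{1+\alpha/2}$ blows up as $\rho\to\infty$ and only yields boundedness of $g$ on bounded intervals; note that \eqref{eq:estG1-leq} itself is unaffected, since for $|\tau|\ge 1$ the crude bound already gives $G_\alpha(\tau)\le C|\tau|^{-N-\alpha}\le C|\tau|^{-2-\alpha}$. The fix is one line: for $\rho\ge 1$ drop the factor $(1+2t)^{-(3+\alpha)/2}\le 1$ and compute $\int_0^{2/\rho}(t(2-\rho t))^{-1/2}dt=\pi\rho^{-1/2}$, or argue as the paper does, namely $g(\tau^2)=|\tau|^{2+\alpha}G_\alpha(\tau)\le C|\tau|^{2-N}$ by \eqref{eq:est-upGa}.
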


\begin{proof}
In the following, the letter $C$ stands for different positive constants depending only on $N$ and $\alpha$. 
We have that 
\begin{align*}
G_{\a}(\tau)=2\int_{S^{N-2}}\frac{1}{\left( {\tau^ 2}+2(1-\s_1)  \right)^{\frac{N+\a}{2}}}\, d\s&=2 C_N 
\int_{-1}^1\frac{(1-\s_1^2)^{\frac{N-4}{2}}}{\left( {\tau^ 2}+2(1-\s_1)  \right)^{\frac{N+\a}{2}}}\, d\s_1\\
&=2 C_N 
\int_0^2\frac{(r(2-r))^{\frac{N-4}{2}}}{\left( {\tau^ 2}+2 r \right)^{\frac{N+\a}{2}}}\, dr,
\end{align*}
with $C_N=|S^{N-3}|=\frac{2\pi^{(N-2)/2}}{\G((N-2)/2)}$. This leads to  
\be\label{eq:est-upGa}
G_{\a}(\tau)\leq 2C_N |\tau|^{-N-\a} \int_0^2{(r(2-r))^{\frac{N-4}{2}}} \, dr\leq C |\tau|^{-N-\a} \qquad \text{for $\tau \not = 0$.} 
\ee
Making the further change of variable $t=r/\tau^2$, we also find that 
$$
G_{\a}(\tau)= \frac{2  C_N}{|\tau|^{N+\a}} \int_0^{2}\frac{(r(2- r))^{\frac{N-4}{2}}}{\left( 1+2 r/\tau^2 \right)^{\frac{N+\a}{2}}}\, dr\\
=\frac{2  C_N}{|\tau|^{2+\a}} 
\int_0^{2/\tau^2}\frac{(t(2-\tau^2 t))^{\frac{N-4}{2}}}{\left( 1+2 t \right)^{\frac{N+\a}{2}}}\, dt =  \frac{g(\tau^2)}{|\tau|^{2+\a}}
$$
for $\tau \not =0$, with $g$ defined in~\eqref{eq:def-g}. 

Next we prove \eqref{eq:gr2-g0}. For this we write 
\begin{align}
g_0-g(\rho)&= 2 C_N\int_{0}^{2/\rho} \frac{(2^{\frac{N-4}{2}}-(2-\rho t)^{\frac{N-4}{2}}) t^{\frac{N-4}{2}}}{\left( 1+2 t \right)^{\frac{N+\a}{2}}}\, dt+   2^{\frac{N-2}{2}} C_N\int_{2/\rho}^{+\infty} \frac{  t^{\frac{N-4}{2}}}{\left( 1+2 t \right)^{\frac{N+\a}{2}}}\, dt \nonumber\\
&=: 2C_N I_1(\rho)+ 2^{\frac{N-2}{2}} C_N I_2(\rho). \label{eq:def_I1-I2}
\end{align}
We start with $I_2$ and notice that 
$$
I_2(\rho)= \int_{2/\rho}^{+\infty} \frac{   t^{\frac{N-2}{2}} t^{-1}  }{\left( 1+2 t \right)^{\frac{N+\a}{2}}}\, dt\leq \frac{\rho}{2}
\int_{0}^{+\infty} \frac{   t^{\frac{N-2}{2}}  }{\left( 1+2 t \right)^{\frac{N+\a}{2}}}\, dt,
$$
which yields 
\be \label{eq:I2}
I_2(\rho)\leq C \rho \qquad \text{for $\rho>0$.} 
\ee

On the other hand, if $N=4$, we have $I_1(\rho)=0$ for $\rho \in (0,1)$, and thus \eqref{eq:gr2-g0} follows. We now consider the case $N\ge 3$, $N \not=4$, and we write $I_1(\rho)$ as follows:
\begin{align}
I_1(\rho) &=\int_{0}^{2/\rho} \frac{(2^{\frac{N-4}{2}}-(2-\rho t)^{\frac{N-4}{2}}) t^{\frac{N-4}{2}}}{\left( 1+2 t \right)^{\frac{N+\a}{2}}}\, dt \nonumber \\
&= \int_{0}^{2/\rho}  \int_0^{1} \frac{d}{d\varrho} \frac{(2^{\frac{N-4}{2}}-(2- \varrho \rho t)^{\frac{N-4}{2}}) t^{\frac{N-4}{2}}}{\left( 1+2 t \right)^{\frac{N+\a}{2}}}\, d\varrho\, dt \nonumber\\
& = \frac{N-4}{2} \int_{0}^{2/\rho} \int_{0}^1 \rho t \frac{(2-\varrho \rho  t)^{\frac{N-6}{2}} t^{\frac{N-4}{2}}}{\left( 1+2 t \right)^{\frac{N+\a}{2}}}  d\varrho\, dt \nonumber\\
&= \frac{N-4}{2}\rho\int_{0}^1 \Bigl( I_{11}(\rho,\varrho)+ I_{12}(\rho,\varrho)\Bigr)\,d \varrho \label{eq:I1}
\end{align}
with 
$$
I_{11}(\rho,\varrho):= \int_{0}^{1/\rho} \frac{(2- \varrho \rho t)^{\frac{N-6}{2}} t^{\frac{N-2}{2}}}{\left( 1+2 t \right)^{\frac{N+\a}{2}}} dt 
\quad \text{and}\quad I_{12}(\rho,\varrho) :=\int_{1/\rho}^{2/\rho} \frac{(2- \varrho \rho t)^{\frac{N-6}{2}} t^{\frac{N-2}{2}}}{\left( 1+2 t \right)^{\frac{N+\a}{2}}} dt. 
$$
To estimate $I_{11}$, we observe that $2-\varrho\leq 2-\varrho \rho t\leq 2$ if $0\leq t\leq \frac{1}{\rho}$.  Consequently,
\begin{equation}
I_{11}(\rho,\varrho) \leq  \max \left\{ (2-\varrho)^{\frac{N-6}{2}}, 2^{\frac{N-6}{2}}  \right\} \int_{0}^{+\infty} \frac{   t^{\frac{N-2}{2}}  }{\left( 1+2 t \right)^{\frac{N+\a}{2}}}\, dt  \le C \label{eq:I11}
\end{equation}
for $\rho,\varrho \in (0,1)$. Moreover,  for $\rho, \varrho \in (0,1)$, we have   
\begin{align*}
I_{12}(\rho,\varrho)&\leq C  \int_{1/\rho}^{2/\rho} {(2- \varrho \rho t)^{\frac{N-6}{2}} t^{-1 - \a/2}} \, dt 
=C (\varrho \rho)^{\a/2}\int_\varrho^{2\varrho} (2-s)^{\frac{N-6}{2}}s^{-1-\a/2}\, ds\\
&\le \frac{C}{\varrho} \int_\varrho^{2\varrho} (2-s)^{\frac{N-6}{2}}\, ds.
\end{align*} 
If $\varrho \in (0,\frac{1}{2}]$, it thus follows that 
$$
I_{12}(\rho,\varrho)\leq  C \max_{0 \le s \le 1} (2-s)^{\frac{N-6}{2}} \le C,
$$
whereas in case $\varrho \in [\frac{1}{2},1)$ we deduce, since $N\ge 3$, $N \not=4$,  that 
$$
I_{12}(\rho,\varrho)\leq  C | (2-2\varrho)^{\frac{N-4}{2}} -(2-\varrho)^{\frac{N-4}{2}}| \le C(1-\varrho)^{-\frac{1}{2}}.
$$

From the last two estimates and (\ref{eq:I11}), we infer that 
$$
\int_{0}^1(I_{11}(\rho,\varrho)+ I_{12}(\rho,\varrho))\, d\varrho\leq C \qquad \text{for $\rho \in (0,1)$,}
$$
and together with \eqref{eq:I1} this yields 
$$
I_1(\rho)\leq C \rho \qquad \text{for $\rho \in (0,1)$.}
$$
Combining this inequality with  \eqref{eq:def_I1-I2} and \eqref{eq:I2}, we conclude that   
$$
|g_0-g(\rho)|\leq C \rho   \qquad \text{for $\rho \in (0,1)$.}
$$
Therefore \eqref{eq:gr2-g0} follows, and \eqref{eq:gr2-g0} implies (\ref{def-g-0}). Moreover,  
from (\ref{def-g-0}) and \eqref{eq:est-upGa} we deduce that \eqref{eq:estG1-leq} holds.
Finally, using \eqref{def-g-0} and $g(\t^2)=|\t|^{2+\a}G_\a(\t)$ combined with \eqref{eq:est-upGa},
we see that $g$ is bounded on $(0,\infty)$, as claimed. 
\end{proof}

Our next result will be important to derive estimates for the eigenvalues of the operator 
$$
\vp\mapsto \int_{\R}(\vp(s)-\vp(s-\t)) G_\a(\t)\, d\t
$$
acting on even $2\pi$-periodic functions, see Lemma~\ref{lemmL0} in Section~\ref{sec:nonl-probl-solve}.  As we shall see in that lemma, the eigenvalues of this operator are expressed in terms of the function $h: [0,\infty) \to \R$ defined by 
\be \label{eq:def-h}
h(b):= \int_{\R}( 1-\cos(b \t))G_\a(\t)\, d\t=  2\int_{\R}\int_{S^{N-2}}\frac{1-\cos(b \t)}{\left( {\t^2}+2(1-\s_1)  \right)^{\frac{N+\a}{2}}}\, d\s d\t.
\ee
We note that $h$ is well defined by \eqref{eq:estG1-leq},
since $|1-\cos(b\tau)| = 2 \sin^2 \frac{b\tau}{2} \le \frac{b^2 \tau^2}{2}$ for $\tau \in \R$, $b \ge 0$.

\begin{lemma}\label{lem:eigen-monoton}
For $N\geq 3$, the function $h$ defined in \eqref{eq:def-h} is differentiable. Moreover, it satisfies $h(0)=0$, 
\begin{equation}
  \label{eq:h-derivative-pos}
  h'(b)=  \int_{\R}\t\sin(\t b) G_\a(\t)\, d\t  > 0 \qquad \text{for $b>0$}
\end{equation}
and 
\begin{equation}
  \label{eq:h-limit-b}
0< \lim_{b\to +\infty}\frac{h(b)}{b^{1+\a}} < +\infty.
\end{equation}
\end{lemma}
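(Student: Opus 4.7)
The identity $h(0)=0$ is immediate. For the differentiability of $h$ and the formula \eqref{eq:h-derivative-pos}, I will differentiate under the integral sign in \eqref{eq:def-h}. The formal $b$-derivative of the integrand is $\tau\sin(b\tau)G_\alpha(\tau)$, and combining Lemma~\ref{lem:G_1} with the elementary bound $|\tau\sin(b\tau)|\le\min(b\tau^2,|\tau|)$ dominates this by $Cb|\tau|^{-\alpha}$ near the origin (integrable since $\alpha<1$) and by $C|\tau|^{1-N-\alpha}$ at infinity (integrable because $N\ge 3$). The same bound applied to the mean-value form of the difference quotient of $1-\cos(b\tau)$ in $b$ justifies passing to the limit, giving \eqref{eq:h-derivative-pos}.

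For the strict positivity of $h'(b)$ on $(0,\infty)$, I insert the definition of $G_\alpha$ into \eqref{eq:h-derivative-pos} and apply Fubini to obtain
\begin{equation*}
h'(b) = 2\int_{S^{N-2}} J_b(|\sigma-e_1|)\,d\sigma, \qquad J_b(a) := \int_\R \frac{\tau\sin(b\tau)}{(\tau^2+a^2)^{(N+\alpha)/2}}\,d\tau.
\end{equation*}
Since $|\sigma-e_1|>0$ for almost every $\sigma\in S^{N-2}$, it suffices to show that $J_b(a)>0$ for all $a,b>0$. The key device will be the subordination identity
\begin{equation*}
(\tau^2+a^2)^{-(N+\alpha)/2} = \frac{1}{\Gamma((N+\alpha)/2)}\int_0^\infty t^{(N+\alpha)/2-1}\,e^{-t(\tau^2+a^2)}\,dt;
\end{equation*}
substituting it, exchanging the order of integration and computing the one-dimensional Gaussian integral $\int_\R \tau\sin(b\tau)e^{-t\tau^2}\,d\tau = \frac{b\sqrt{\pi}}{2t^{3/2}}e^{-b^2/(4t)}$ yield
\begin{equation*}
J_b(a) = \frac{b\sqrt{\pi}}{2\Gamma((N+\alpha)/2)}\int_0^\infty t^{(N+\alpha-5)/2}\,e^{-ta^2-b^2/(4t)}\,dt,
\end{equation*}
which is manifestly positive.

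For the asymptotic statement \eqref{eq:h-limit-b}, I substitute $\tau=t/b$ in \eqref{eq:def-h} and use the factorization $G_\alpha(t/b)=b^{2+\alpha}|t|^{-2-\alpha}g(t^2/b^2)$ from \eqref{eq:representation-G} to obtain
\begin{equation*}
\frac{h(b)}{b^{1+\alpha}} = \int_\R (1-\cos t)\,|t|^{-2-\alpha}\,g(t^2/b^2)\,dt.
\end{equation*}
As $b\to+\infty$, for each fixed $t\neq 0$ we have $t^2/b^2\to 0$ and hence $g(t^2/b^2)\to g_0>0$ by \eqref{eq:gr2-g0}, while $g$ is uniformly bounded on $(0,\infty)$ thanks to Lemma~\ref{lem:G_1}. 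Since $(1-\cos t)|t|^{-2-\alpha}$ is integrable on $\R$ (using $1-\cos t\sim t^2/2$ near $0$ combined with $\alpha<1$, and the decay $|t|^{-2-\alpha}$ at infinity), dominated convergence produces $\lim_{b\to+\infty}h(b)/b^{1+\alpha} = g_0\int_\R (1-\cos t)|t|^{-2-\alpha}\,dt$, a finite and strictly positive number.

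The main obstacle will be the positivity step. Neither the integrand of $h'(b)$ nor the auxiliary function $g$ from Lemma~\ref{lem:G_1} has an evident sign or monotonicity structure, so positivity cannot be read off directly from \eqref{eq:h-derivative-pos} (for instance, a naive change of variables leaves an integral of $\sin(t)/t^{1+\alpha}$ against $g(t^2/b^2)$, whose sign is unclear). The subordination identity is what makes the sign transparent, reducing the problem to an explicit Gaussian computation in $t$. A minor technical point throughout will be justifying the Fubini exchanges and the dominated convergence bounds using the two-sided decay estimate of Lemma~\ref{lem:G_1}.
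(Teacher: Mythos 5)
Your proposal is correct, and the parts concerning $h(0)=0$, the differentiation under the integral sign, and the limit \eqref{eq:h-limit-b} follow essentially the same route as the paper (domination via Lemma~\ref{lem:G_1} plus the mean value theorem, and the rescaled representation \eqref{eq:new-rep-h} with dominated convergence). Where you genuinely diverge is the strict positivity of $h'(b)$. The paper argues by cases: for $N\ge 4$ it computes $g'(\rho)<0$ explicitly from \eqref{eq:def-g}, so that $b\mapsto g(\tau^2/b^2)$ is increasing and positivity of $h'$ follows from the representation \eqref{eq:new-rep-h}; for $N=3$ it applies Fubini and identifies the inner integral with a Macdonald function $K_{\alpha/2}$ via a table formula from Gradshteyn--Ryzhik, whose positivity is known. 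Your subordination argument, writing $(\tau^2+a^2)^{-(N+\alpha)/2}$ as a Gamma-weighted Laplace transform and reducing $J_b(a)$ to the explicit Gaussian integral $\int_\R \tau\sin(b\tau)e^{-t\tau^2}\,d\tau=\tfrac{b\sqrt{\pi}}{2}t^{-3/2}e^{-b^2/(4t)}$, replaces both cases by a single manifestly positive formula valid for every $N\ge 3$ (in fact for any dimension), and it is essentially a self-contained derivation of the positivity that the paper imports through the Bessel-function identity in the case $N=3$. The Fubini exchanges you invoke are justified exactly as you say: the double integral over $S^{N-2}\times\R$ is absolutely convergent because $\int_\R |\tau\sin(b\tau)|\,G_\alpha(\tau)\,d\tau<\infty$ by \eqref{eq:estG1-leq}, and the exchange in the subordination step is covered by Tonelli since integrating the absolute value in $t$ first reproduces the same convergent kernel. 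Two cosmetic remarks: the positivity $g_0>0$ is read off from the definition \eqref{def-g-0} rather than from \eqref{eq:gr2-g0}, and $|\sigma-e_1|>0$ holds for all $\sigma\neq e_1$, which is all you need; neither affects the argument.
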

\begin{proof}
 We first note that $h(0)=0$ holds trivially by definition. Next we prove that $h$ is differentiable. Indeed using Lemma  \ref{lem:G_1}, for $0 \le b< b_0$ and $\t \in \R \setminus \{0\}$ we have  
\be \label{eq:estimate-differentiability-h}
|\t \sin(\t b) G_\a(\t)| \le C \min \{\,b_0 |\t|^2, |\t|\,\} |\t|^{-2-\a}=:e(\t), 
\ee
and the function $\t \mapsto e(\t)$ is integrable over $\R$. Hence a standard argument based on Lebesgue's theorem and the mean value theorem shows that the limit
\begin{equation}
  \label{eq:formula-derivative-h}
h'(b)= \lim_{\rho \to 0}  \int_{\R}  \frac{\cos(b \t)-\cos((b+\rho)\t)}{\rho} 
G_\a(\t)\,dt =  \int_{\R}\t\sin(\t b) G_\a(\t)\, d\t 
\end{equation}
exists. Hence $h$ is differentiable and satisfies the equality in (\ref{eq:h-derivative-pos}).

To prove (\ref{eq:h-limit-b}), using (\ref{eq:representation-G}) we see that 
\begin{equation}
  \label{eq:new-rep-h}
h(b)=\frac{1}{b} \int_{\R}(1-\cos( \t))G_\alpha(\t/b) d\t = b^{1+\alpha} \int_{\R} \frac{1-\cos( \t)}{ |\t|^{2+\alpha}} g(\t^2/b^2)\,d\t   
\end{equation}
for $b>0$, with $g$ given by \eqref{eq:def-g}. Since the function $g$ is bounded by Lemma~\ref{lem:G_1},  it follows from Lebesgue's theorem that 
$$
\lim_{b\to +\infty} \frac{h(b)}{b^{1+\a}} = g_0 \int_{\R}\frac{1-\cos(\t)}{|\t|^{2+\a}}\, d\t \quad \in \; (0,\infty)
$$
with $g_0$ defined in (\ref{def-g-0}). 

To prove that $h'(b)>0$ for $b>0$, we note that from \eqref{eq:def-g} we have,  for $\rho>0$,
$$
g'(\rho)= -  \, \frac{4C_N}{(1+\frac{4}{\rho})^{\frac{4+\alpha}{2}} \rho^2}< 0 \qquad \text{if $N=4$}
$$
and  
$$
g'(\rho)= - (N-4)C_N \int_0^{2/\rho} \frac{(2-\rho t)^{\frac{N-6}{2}} t^{\frac{N-2}{2}}}{\left( 1+2 t \right)^{\frac{N+\a}{2}}}\, dt<0 \qquad \text{if $N \ge 5$.}
$$
Thus in case $N \ge 4$ the function $b \mapsto g(\frac{\tau^2}{b^2})$ is increasing on $(0,\infty)$ for every 
$\tau \in \R \setminus \{0\}$. 
Consequently,  by (\ref{eq:new-rep-h}), the fact that $h$ is differentiable,
and the strict positivity of the function $g$, we immediately deduce that $h'(b)>0$ for $b>0$. 

In case $N=3$, we can use (\ref{eq:formula-derivative-h}) and the estimate (\ref{eq:estimate-differentiability-h}) to compute, by changing the order of integration,
\begin{align*}
h'(b)  &=\int_{\R}\t\sin(\t b) G_\a(\t)\, d\t = 2 \int_{\R}\t\sin(\t b) \int_{S^{1}}\frac{1}{(\tau^2+2(1-\s_1))^{\frac{3+\a}{2}}}\,d\s d\t \\
&= 2 \int_{S^{1}}  \int_{\R} \frac{\t\sin(\t b)}{(\tau^2+2(1-\s_1))^{\frac{3+\a}{2}}}\,  d\t d\s = 
\int_{S^{1}} V_b(\sqrt{2(1-\s_1)}) d\s,
\end{align*}
where 
\begin{equation}
\label{eq:Plambda1ms1}
V_b(\xi):=2\int_{\R} \frac{\t \sin(\t b)}{\left( \t^2+\xi^2  \right)^{\frac{3+\a}{2}}}\,  d\t=  4\int_{0}^\infty  
\frac{\t \sin(\t b)} {\left(  {\t^2} +\xi^2   \right)^{\frac{3+\a}{2}}}\,  d\t= 4\chi  b^{1+\a/2 } \xi^{-\alpha/2}  
K_{\a/2}(b\xi)
\end{equation}
for $\xi,b>0$. Here   $K_\nu$ is the modified Bessel function of the second kind (also called Macdonald function),  
$\chi:=  2^{-1-\alpha/2} \frac{\sqrt{\pi}}{\Gamma((3+\a)/2) }  >0$ 
and $\Gamma$ is the usual Gamma function. For the second equality in (\ref{eq:Plambda1ms1}), 
we refer e.g. to \cite[Page 442,  3.771, 5.]{GR} and note that $K_{\alpha/2}=K_{-\alpha/2}$
(see \cite[Page 929, 8.486, 16.]{GR}). 
Since $K_{\a/2}(b\xi)>0$ (by \cite[Page 917, 8.432, 1.]{GR}) and therefore $V_b(\xi)>0$ for $\xi,b>0$, 
it thus follows that $h'(b)>0$ for $b>0$, 
completing the proof of the lemma.
\end{proof}

\section{Nonlinear problem to be solved and Proof of Theorem~\ref{res:cyl1}}
\label{sec:nonl-probl-solve}

To prove Theorem~\ref{res:cyl1}, we are looking for constants $R, a_0>0$ and functions $u_a$ of the form
$$
u_a(s)= R+\frac{\varphi_a(\lambda(a) s)}{\l(a)} , \qquad  a \in (-a_0,a_0),   
$$
satisfying the equation
\be\label{eq:aim}
H(u_a)(s)= H(R)\qquad \textrm{ for all } s\in\R, \; a \in (-a_0,a_0).   
\ee
Here we require that $\lambda: (-a_0,a_0) \to (0,\infty)$ is a smooth function such that $\lambda(0)=1$.
Moreover, we look for functions 
$\varphi_a \in C^{1,\beta}(\R)$ with $a \in (-a_0,a_0)$ which are even, $2\pi$-periodic, and 
satisfy the expansion
$$
\varphi_a = a(\cos(\cdot)+v_a) 
$$
with $v_{a}\to 0$ in $C^{1,\beta}(\R)$ as $a\to 0$ and $\int_{-\pi}^{\pi}v_{a}(t)\cos(t)\, dt=0$ for $a \in (-a_0,a_0)$.

Note that we have rescaled the problem so that we can work with functions $\varphi_a$
with fixed period. For the rescaled function $\tilde u_a(s):=\l(a) u_a(\frac{s}{\l(a)})$, a change of variables gives 
$$
H(\tilde u_a)(s)=\l(a)^{-\a}H(u_a)\left( \frac{s}{\l(a)}\right)\qquad \text{for $s \in \R.$}
$$
Therefore by \eqref{eq:aim}  our problem becomes 
\begin{equation*}
\label{eq:aim2-scale}
H(\lambda(a) R + \varphi_a)(s)=  H(\tilde u_a)(s)= \l(a)^{-\a} H(R)=H(\l(a) R)\qquad \textrm{ for all } s\in\R. 
\end{equation*}
For matters of convenience, we will use $\mu(a)= \lambda(a) R$ as a new unknown.  
Our aim is to deduce Theorem \ref{res:cyl1} from the Crandall-Rabinowitz theorem~\cite{Crandall1971} applied to the map 
$$
  \label{eq:defPhi1}
(\mu, \varphi) \mapsto \Phi(\mu,\varphi):= \mu^{1+\a} \bigl \{H(\mu + \varphi) - H(\mu)\bigr\},  
$$
since our equation has become $\Phi(\mu,\varphi)=0$.
The factor $\mu^{1+\a}$ is introduced  to simplify some expressions at a later stage. 

We need to introduce the functional spaces in which we work. We fix $\beta$ such that
\begin{equation}\label{defbeta}
0<\alpha <\beta <\min\{1,2\alpha+1/2\}.
\end{equation}
The condition $\beta < 2\alpha+1/2$ is technical (to simplify a proof on regularity) and could
be avoided. Consider the Banach spaces
$$
 X:=C^{1,\beta}_{p,e}=\{\va:\R\to\R\, :\ \va\in C^{1,\beta}(\R) \text{ is $2\pi$-periodic and even}\}
$$
and
$$
 Y:=C^{0,\beta-\alpha}_{p,e}=\{\tilde\va:\R\to\R\, :\ \tilde\va\in C^{0,\beta-\alpha}(\R) 
 \text{ is $2\pi$-periodic and even}\}.
$$
The norms of $X$, respectively $Y$, are the standard $C^{1,\beta}(\R)$ and $C^{0,\beta-\alpha}(\R)$-norms, 
respectively, defined by
\begin{equation}
\label{eq:def-hoelder-norm}
\|u\|_{C^{k,\gamma}(\R)} : = \sum_{j=0}^k \|u^{(j)}\|_{L^\infty(\R)} +  \sup_{\stackrel{s,t \in \R}{s \not = t}}\frac{|u^{(k)}(s)-u^{(k)}(t)|}{|s-t|^{\gamma}}. 
\end{equation}

Since $H: \calO \subset C^{1,\beta}(\R) \to C^{0, \b-\a}(\R)$ is smooth in $\cO$ by Proposition \ref{lem:diff-H} (for $N\geq 3$), 
and clearly $H$ sends $2\pi$-periodic and even functions to functions which are also $2\pi$-periodic and even 
(for instance by expression \eqref{new-rep-H}), we infer that 
\be\label{defPhi}
\Phi: \cD_\Phi  \to Y,\qquad \Phi(\mu,\varphi) = \mu^{1+\a} \bigl \{H(\mu + \varphi) - H(\mu)\bigr\}
\ee
is a smooth map defined on the open set 
\begin{equation}
  \label{eq:DPhi}
\cD_\Phi:= \{(\mu,\varphi)\::\: \mu>0,\: \varphi \in X,\, \inf_{\R} \varphi>-\mu\} \;\subset \; \R \times X.
\end{equation}
By definition, we have 
$$
\Phi(\mu,0)=0 \qquad \text{for every $\mu>0$.}
$$
Next we need to study the properties of the family of linearized operators 
$$
L_\mu := D_\varphi \Phi(\mu,0) = \mu^{1+\alpha} DH(\mu) \;\in \; \cL(X,Y), \qquad \quad \mu>0.
$$
Here and in the following, $\cL(X,Y)$ denotes the space of bounded linear operators \mbox{$X \to Y$.} By Proposition~\ref{lem:diff-H}, $L_\mu$ is given by 
\begin{equation}
   \label{eq:def-L-operator-1}
  L_\mu v(s)= PV \int_{\R} (v(s)- v(s-\mu \tau))G_\a(\tau)\, d\tau -  b_\alpha  \,   v(s) \qquad \text{for $v \in X$.}
\end{equation}
 
\begin{lemma}\label{lemmL0}
\label{sec:nonl-probl-solve-1}
Let $N\geq 3$ and $\mu>0$. The functions 
\begin{equation}
  \label{eq:def-e-k}
e_k \in X, \quad e_k(s)=\cos(ks), \quad k \in \N \cup \{0\}
\end{equation}
are eigenfunctions of $L_\mu$ with corresponding  eigenvalues 
\be \label{eq:lkhk}
\l_k(\mu)=h(k \mu ) -b_\alpha, \qquad k \in \N\cup \{0\},
 \ee
where the function $h$ is defined in \eqref{eq:def-h}. Moreover, 
 \begin{align}
\label{eigen}
&\lambda_0(\mu) <\lambda_1(\mu) <\lambda_2(\mu) <\dots \qquad \text{and}\\ 
&0<  \lim_{k\to +\infty}\dfrac{\lambda_k(\mu)}{k^{1+\alpha}} <\infty .
\label{assym}
\end{align}
\end{lemma}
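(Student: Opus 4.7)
The strategy is purely computational: verify the eigenvalue identity by a direct evaluation of $L_\mu e_k$, then extract the monotonicity and asymptotic statements from the corresponding properties of $h$ already established in Lemma~\ref{lem:eigen-monoton}.

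First I would symmetrize the principal value. Since $G_\alpha$ is manifestly even in $\tau$ (the integrand in its definition depends on $\tau$ only through $\tau^2$), the substitution $\tau \mapsto -\tau$ yields
$$
L_\mu v(s) = \frac{1}{2}\int_\R \bigl(2v(s) - v(s-\mu\tau) - v(s+\mu\tau)\bigr) G_\alpha(\tau)\,d\tau - b_\alpha v(s).
$$
For $v \in X \subset C^{1,\beta}(\R)$ the symmetric difference satisfies $|2v(s) - v(s-\mu\tau) - v(s+\mu\tau)| \le C \mu^{1+\beta} |\tau|^{1+\beta}$, while \eqref{eq:estG1-leq} gives $G_\alpha(\tau) \le C \min\{|\tau|^{-2-\alpha}, |\tau|^{-N-\alpha}\}$; the integrand is therefore absolutely integrable on $\R$, so the $PV$ interpretation is not needed in the symmetrized form.

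Next, for $v = e_k$, the trigonometric identity
$$
2\cos(ks) - \cos(k(s-\mu\tau)) - \cos(k(s+\mu\tau)) = 2\cos(ks)\bigl(1 - \cos(k\mu\tau)\bigr)
$$
gives, by the definition \eqref{eq:def-h} of $h$,
$$
L_\mu e_k(s) = \cos(ks) \int_\R \bigl(1 - \cos(k\mu\tau)\bigr) G_\alpha(\tau)\,d\tau - b_\alpha e_k(s) = \bigl(h(k\mu) - b_\alpha\bigr) e_k(s),
$$
which is exactly \eqref{eq:lkhk}.

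Finally, the ordering \eqref{eigen} is immediate from Lemma~\ref{lem:eigen-monoton}: $h(0)=0$ together with $h'(b)>0$ for $b>0$ means that $b \mapsto h(b)$ is strictly increasing on $[0,\infty)$, so $\lambda_k(\mu) = h(k\mu) - b_\alpha$ is strictly increasing in $k$ since $0 \le k\mu < (k+1)\mu$. For the asymptotic \eqref{assym}, I would factor
$$
\frac{\lambda_k(\mu)}{k^{1+\alpha}} = \mu^{1+\alpha}\, \frac{h(k\mu)}{(k\mu)^{1+\alpha}} - \frac{b_\alpha}{k^{1+\alpha}},
$$
and invoke \eqref{eq:h-limit-b} from Lemma~\ref{lem:eigen-monoton} to conclude that $\lim_{k \to \infty} \lambda_k(\mu)/k^{1+\alpha} = \mu^{1+\alpha} \lim_{b \to \infty} h(b)/b^{1+\alpha} \in (0,\infty)$. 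No substantial obstacle is anticipated here, since all the hard analytic work has already been carried out at the level of the function $h$.
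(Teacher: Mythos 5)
Your proposal is correct and follows essentially the same route as the paper: a direct evaluation of $L_\mu e_k$ exploiting the evenness of $G_\alpha$ and the cosine addition formula (your symmetrization of the principal value is just a repackaging of the paper's cancellation of the odd $\sin(ks)\sin(k\mu\tau)$ term), followed by citing Lemma~\ref{lem:eigen-monoton} for the monotonicity and the asymptotics \eqref{eigen}--\eqref{assym}. The extra remark that the symmetrized integrand is absolutely integrable, so the $PV$ is not needed, is a harmless refinement consistent with the paper's observation that $h$ is well defined.
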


\begin{proof}
Let $k \in \N \cup \{0\}$. We note that, by (\ref{eq:def-L-operator-1}),
\begin{align*}
&L_\mu e_k(s)= PV \int_{\R} \left\{\cos(ks)-\cos(k(s-\mu \tau))\right\}G_\a(\tau)\, d\tau -  b_\alpha  \,   \cos(ks)\\
&=  PV  \int_{\R} \left\{\cos(ks)-\cos(ks) \cos(k \mu \tau )-\sin(ks) \sin(k \mu \tau ) \right\}G_\a(\tau) \, d\tau -   b_\alpha \,   \cos(ks)\\
&=   \left\{ PV \int_{\R}\left\{ 1-\cos(k \mu \tau )\right\}\, G_\a(\tau )\, d\tau-  b_\alpha \right\} \,   \cos(k s) = \bigl(h(k \mu) -b_\alpha\bigr)e_k(s),
\end{align*}
with $h$ defined in \eqref{eq:def-h}. Here we used the oddness of $\sin(\cdot)$ and the evenness of $G_\alpha$. This shows that $e_k$ is an eigenfunction of $L_\mu$ with eigenvalue $h(\mu k)-b_\alpha$. The properties~(\ref{eigen}) and (\ref{assym}) now follow readily from Lemma \ref{lem:eigen-monoton}.
\end{proof}

We are now in position to establish the following.

\begin{proposition}\label{propPhi}
Let $N\geq 3$. There exists a unique $\mu_*>0$ such that the linear operator $L:= L_{\mu_*}: X \to Y$ has the following properties. 
\begin{itemize}
\item[(i)] The kernel of $L$ is spanned by the function $\cos(\cdot)$.
\item[(ii)] The range of $L$ is given by 
$$
R(L)= \Bigl \{v \in Y \::\: \int_{-\pi}^{\pi} \cos(s) v(s)\,ds = 0 \Bigr\}. 
$$
\end{itemize}
Moreover, we have that 
\begin{equation}
  \label{eq:transversality-cond}
\partial_\mu \Bigl|_{\mu= \mu_*} L_\mu \cos(\cdot) \not  \in R(L).   
\end{equation}
\end{proposition}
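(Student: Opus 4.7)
The plan is to diagonalize $L=L_{\mu_*}$ using the cosine eigenbasis from Lemma~\ref{lemmL0}, reduce (i)--(ii) to spectral statements, and verify the transversality condition by a direct computation of $\partial_\mu\lambda_1(\mu)$. By Lemma~\ref{lem:eigen-monoton} the function $h:[0,\infty)\to[0,\infty)$ is continuous, strictly increasing (since $h'>0$ on $(0,\infty)$), with $h(0)=0$ and $h(b)\to\infty$ as $b\to\infty$. Since $b_\alpha>0$, there is a unique $\mu_*>0$ with $h(\mu_*)=b_\alpha$, equivalently $\lambda_1(\mu_*)=0$; by Lemma~\ref{lemmL0} all other eigenvalues are nonzero, with $\lambda_0(\mu_*)=-b_\alpha<0$ and $\lambda_k(\mu_*)>0$ for $k\geq 2$.

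For (i), I would rewrite $L$ as a $2\pi$-periodic operator
$$
Lv(s)=\int_{-\pi}^\pi \bigl(v(s)-v(s-t)\bigr)K(t)\,dt - b_\alpha v(s),
$$
where $K$ is the even $2\pi$-periodization of $\tau\mapsto G_\alpha(\tau/\mu_*)/\mu_*$. Evenness of $K$ makes $L$ symmetric on $L^2(-\pi,\pi)$, so the cosine basis $\{e_k\}$ from (\ref{eq:def-e-k}) is an orthogonal eigenbasis. For $v\in X\subset L^2(-\pi,\pi)$ with $Lv=0$, testing against $e_k$ yields $\lambda_k(\mu_*)\int_{-\pi}^\pi v(s)\cos(ks)\,ds=0$ for every $k$; since $\lambda_1(\mu_*)$ is the only vanishing eigenvalue, $v$ must be a scalar multiple of $e_1=\cos(\cdot)$. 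For the necessary inclusion in (ii), self-adjointness together with $Le_1=0$ gives $\int_{-\pi}^\pi (Lw)(s)\cos(s)\,ds=\int_{-\pi}^\pi w(s)\,Le_1(s)\,ds=0$ for every $w\in X$.

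For the opposite inclusion in (ii), given $v\in Y$ with $\int_{-\pi}^\pi v(s)\cos(s)\,ds=0$, I would expand $v=\sum_{k\ne 1}c_k e_k$ in $L^2$ and set
$$
w:=\sum_{k\ne 1}\frac{c_k}{\lambda_k(\mu_*)}\,e_k.
$$
The asymptotics $\lambda_k(\mu_*)\sim k^{1+\alpha}$ from (\ref{assym}) show that this series already converges in the fractional Sobolev space $H^{1+\alpha}$ on the circle. The main obstacle is then to upgrade $w$ from $H^{1+\alpha}$ to the target space $X=C^{1,\beta}$; this amounts to a Schauder-type regularity estimate for the integro-differential operator $L$ of order $1+\alpha$ (its kernel $K$ behaves like $|t|^{-1-\alpha}$ near the origin, by Lemma~\ref{lem:G_1}), and this is precisely the step at which I expect the technical bound $\beta<2\alpha+\tfrac12$ from (\ref{defbeta}) to enter, through a Sobolev embedding bridging the $H^{1+\alpha}$-information and the H\"older target.

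Finally, to prove the transversality condition (\ref{eq:transversality-cond}) I differentiate the identity $L_\mu\cos(\cdot)=\lambda_1(\mu)\cos(\cdot)=(h(\mu)-b_\alpha)\cos(\cdot)$ in $\mu$, obtaining
$$
\partial_\mu\bigl|_{\mu=\mu_*} L_\mu\cos(\cdot)=h'(\mu_*)\cos(\cdot).
$$
Lemma~\ref{lem:eigen-monoton} gives $h'(\mu_*)>0$, so $\int_{-\pi}^\pi h'(\mu_*)\cos^2(s)\,ds=\pi\,h'(\mu_*)\ne 0$; by the characterization of $R(L)$ established in (ii), this function lies outside $R(L)$, which concludes the proof.
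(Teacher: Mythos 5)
Your overall route is the paper's route: the unique $\mu_*$ comes from the strict monotonicity and growth of $h$ (Lemma~\ref{lem:eigen-monoton}) combined with Lemma~\ref{lemmL0}; the kernel statement and the necessary inclusion for the range come from the cosine diagonalization; the candidate preimage $w$ is built by Fourier inversion using the asymptotics \eqref{assym}, landing in the periodic $H^{1+\alpha}$ space; and the transversality is exactly $\partial_\mu\big|_{\mu=\mu_*}L_\mu\cos(\cdot)=h'(\mu_*)\cos(\cdot)\notin R(L)$ because $h'(\mu_*)>0$. All of that is correct and matches the paper's proof of Proposition~\ref{propPhi}.

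The genuine gap is in the surjectivity step: you stop at $w\in H^{1+\alpha}$ and only state that you \emph{expect} a Schauder-type estimate, together with $\beta<2\alpha+\tfrac12$, to upgrade $w$ to $C^{1,\beta}$. That upgrade is the actual technical content of the paper's argument, and it is not a single Sobolev embedding: Morrey's embedding (this is precisely where \eqref{defbeta} enters, via $\tfrac12+\alpha>\beta-\alpha$) only yields $w\in Y=C^{0,\beta-\alpha}_{p,e}$, never $C^{1,\beta}$, since $\beta>\alpha$. The paper then proves Lemma~\ref{lem:Regularity}: using $G_\alpha(\tau)=|\tau|^{-2-\alpha}g(\tau^2)$ (note the singularity of the kernel is $|t|^{-2-\alpha}$, not $|t|^{-1-\alpha}$ as you wrote, consistent with an operator of order $1+\alpha$), the equation $\int_\R\bigl(w(s)-w(s-t)\bigr)G_\alpha(t/\mu_*)\,dt=f$ is rewritten as a constant-coefficient fractional equation $g_0\mu_*^{2+\alpha}\int_\R\bigl(w(s)-w(s-t)\bigr)|t|^{-2-\alpha}\,dt=f-\ti f$, where $\ti f\in Y$ thanks to the quantitative bound $|g(\rho)-g_0|\le C\rho$ of Lemma~\ref{lem:G_1} and the fact that $w\in Y$; Silvestre's regularity theorem (\cite[Proposition 2.8]{Sil}) then gives $w\in C^{1,\beta}(\R)$, hence $w\in X_\perp$. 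Without this freezing-of-the-kernel argument, or an equivalent Schauder estimate for $L$ that you would have to prove, item (ii) is not established. A smaller remark: your symmetry/testing argument for (i) via the $2\pi$-periodized kernel needs justification of the principal-value and Fubini manipulations; the paper avoids this by arguing directly through the Fourier characterization of the spaces $H_\perp$ and $V_\perp$.
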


\begin{proof}
By Lemmas~\ref{lem:eigen-monoton} and~\ref{lemmL0}, there exists a unique $\mu_*>0$ such that \mbox{$L_{\mu_*}\cos(\cdot) = 0$.} We put $L:= L_{\mu_*}$ in the following. Consider the spaces 
\begin{align}
X_\perp &:= \Bigl \{v \in X \::\: \int_{-\pi}^{\pi} \cos(s) v(s)\,ds = 0 \Bigr\} \subset X, \label{def-X-perp}\\
Y_\perp &:= \Bigl \{v \in Y \::\: \int_{-\pi}^{\pi} \cos(s) v(s)\,ds = 0 \Bigr\} \subset Y. \nonumber
\end{align}
To show properties (i) and (ii), it clearly suffices to prove that 
\begin{equation}
  \label{eq:isomorphism}
\text{$L$ defines an isomorphism between $X_\perp$ and $Y_\perp$.}  
\end{equation}
To prove (\ref{eq:isomorphism}), we let 
\begin{align}
H_\perp &:= \Bigl \{v \in H^{1+\alpha}_{loc}(\R) \::\: \text{$v$ even, $2\pi$-periodic with } 
\int_{-\pi}^{\pi} \cos(s) v(s)\,ds = 0 \Bigr\}, \label{defH*}\\
V_\perp &:= \Bigl \{v \in L^2_{loc}(\R) \::\: \text{$v$ even, $2\pi$-periodic with } 
\int_{-\pi}^{\pi} \cos(s) v(s)\,ds = 0 \Bigr\}\nonumber.
\end{align}
We note that the functions $\cos(k \cdot)$, $k \in \{0,2,3,4,\dots \}$ form an orthonormal basis of $V_\perp$, and that $H_\perp$ can be characterized in terms of Fourier coefficients as the subspace of all $v \in V_\perp$ such that
$$
\sum_{k \in \N} \Bigl(k^{1+\alpha} \int_{-\pi}^{\pi} v(s) \cos(ks)\,ds\Bigr)^2 < \infty.
$$
Since $\cos(k\cdot)$ are eigenfunctions of $L$ with eigenvalues $\lambda_k(\mu_*)$, 
from \eqref{eigen}, the fact that $L\cos(\cdot) =0$, the asymptotics \eqref{assym}, 
and the characterization given above we deduce that 
\begin{equation}
\label{eq:Sobolev-invertible}
\text{$L$ defines an isomorphism between $H_\perp$ and $V_\perp$}.
\end{equation}
Next, note that $C^{1,\beta}(\R)\subset H^{1+\alpha}_{loc}(\R)$. This follows from the definition of $H^{1+\alpha}_{loc}(\R)$ via the Gagliardo seminorm, see e.g. \cite[Definition 1.3.2.1]{grisvard}. Indeed, let $v\in C^{1,\beta}(\R)$. 
To see that $v'\in H^{\alpha}_{loc}(\R)$ we need to ensure that 
$$
\int_{\Omega}\int_{\Omega} |v'(s)-v'(\bar{s})|^2 |s-\bar{s}|^{-(1+2\alpha)}\,dsd\bar{s} <
\infty
$$
for any bounded interval $\Omega \subset \R$. This is clearly true since $v'\in C^{0,\beta}(\R)$ and $\beta>\alpha$.

We deduce that $X_\perp \subset H_\perp$. Since also $Y_\perp = V_\perp \cap Y$, we see that $L: X_\perp \to Y_\perp$ is well 
defined and one-to-one. 

To establish surjectivity, let $f \in Y_\perp$. Since $Y_\perp\subset V_\perp$, 
by (\ref{eq:Sobolev-invertible}) there exists $w\in H_\perp$ such that $Lw=f$. Recall that, by (\ref{eq:def-L-operator-1}) and a change of variable, $L$ is given by 
$$
Lw(s)=   \int_{\R} \left\{w(s)-w(s-t)\right\}G_\a\left({t}/{\mu_*}\right)\, \frac{dt}{\mu_*} -  b_\alpha  \,   w(s).
$$
  Hence, $Lw=f$ can be written as
$$
\int_{\R} \left\{w(s)-w(s-t)\right\}G_\a\left({t}/{\mu_*}\right)\, dt = \mu_* b_\alpha w(s) + \mu_* f(s) \qquad\text{for $s \in \R$.}
$$
Moreover, $w\in H_\perp \subset Y=C^{0,\beta-\alpha}_{p,e}$ by Morrey's embedding, 
since $1+\alpha-1/2= 1/2+\alpha>\beta-\alpha$ as assumed in \eqref{defbeta}. 
Thus $\mu_* b_\alpha w + \mu_* f\in Y$, and  Lemma \ref{lem:Regularity} below yields $w\in X \cap H_\perp= X_\perp$. 
The proof of (\ref{eq:isomorphism}) is complete.

It remains to prove~\eqref{eq:transversality-cond}, which is simply a consequence of the fact that 
$$
\partial_\mu \Bigl|_{\mu= \mu_*} L_\mu \cos(\cdot) =  \partial_\mu \Bigl|_{\mu= \mu_*}\l_1(\mu) \cos(\cdot)= h'(\mu_*) \cos(\cdot) 
$$
by \eqref{eq:lkhk} and that $h'(\mu_*)>0$ by Lemma~\ref{lem:eigen-monoton}.
\end{proof}

It remains to prove the regularity result that we have used at the end of the previous proof.

\begin{lemma}\label{lem:Regularity}
Let $N\geq 3$, $\mu>0$, $f\in Y$ and $v\in H_\perp \subset Y$ be such that 
\be\label{eq:duG1eqg} 
\int_{\R} (v(s)-v(s-t))G_{\a}\left({t/\mu }\right)\, dt =f(s) \qquad \textrm{  for all $s\in \R$}, 
\ee 
where $H_\perp$ is defined in (\ref{defH*}). Then $v\in X=C^{1,\beta}_{p,e}$.
\end{lemma}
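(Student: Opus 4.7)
The plan is to isolate the singular principal part of the kernel $G_\alpha$ and reduce (\ref{eq:duG1eqg}) to an equation for the 1D fractional Laplacian of order $(1+\alpha)/2$, to which standard Schauder regularity applies. The starting point is that $v \in Y = C^{0,\beta-\alpha}_{p,e}$: by Morrey's embedding in one dimension, $H^{1+\alpha}_{loc}(\R) \hookrightarrow C^{0,1/2+\alpha}_{loc}$, and the technical assumption $\beta < 2\alpha + 1/2$ in (\ref{defbeta}) ensures $\beta - \alpha \le 1/2 + \alpha$, so combined with periodicity and evenness we obtain $v \in Y$.

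First, using Lemma \ref{lem:G_1}, I would decompose $G_\alpha(\tau) = g_0 |\tau|^{-2-\alpha} + r(\tau)$ with $r(\tau) := |\tau|^{-2-\alpha}(g(\tau^2) - g_0)$. The bound $|g(\rho) - g_0| \le C\rho$ for $\rho \in (0,1)$ gives $|r(\tau)| \le C|\tau|^{-\alpha}$ for $|\tau| \le 1$, while boundedness of $g$ (equivalently the decay of $G_\alpha$) gives $|r(\tau)| \le C |\tau|^{-2-\alpha}$ for $|\tau| \ge 1$; in particular $r \in L^1(\R)$ since $\alpha < 1$. Inserting this splitting in (\ref{eq:duG1eqg}) yields, after appropriate symmetrization,
$$
c_{\alpha,\mu} \, (-\Delta)^{(1+\alpha)/2} v(s) \;=\; f(s) - Tv(s), \qquad Tv(s) \;:=\; \int_\R \bigl(v(s) - v(s-t)\bigr)\, r(t/\mu)\, dt,
$$
where $c_{\alpha,\mu} = g_0 \mu^{2+\alpha}/C_{1,\alpha} > 0$ and $C_{1,\alpha}$ is the normalization constant of the 1D fractional Laplacian $(-\Delta)^{(1+\alpha)/2}$.

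Next, I would control the perturbation $Tv$ in $C^{0,\beta-\alpha}$. For $h \in \R$, the identity
$$
Tv(s+h) - Tv(s) = \int_\R \bigl[(v(s+h) - v(s)) - (v(s+h-t) - v(s-t))\bigr]\, r(t/\mu)\, dt
$$
together with the bound $|v(x+h) - v(x)| \le \|v\|_{C^{0,\beta-\alpha}} |h|^{\beta-\alpha}$ gives $Tv \in C^{0,\beta-\alpha}(\R)$ with seminorm at most $2 \mu \|v\|_{C^{0,\beta-\alpha}}\|r\|_{L^1}$. Thus the right-hand side $f - Tv$ lies in $C^{0,\beta-\alpha}(\R)$. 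Since $v$ is continuous and $2\pi$-periodic, hence bounded, the standard Schauder estimate for the 1D fractional Laplacian (see, e.g., Silvestre's result in his CPAM 2007 paper) applied to the equation $(-\Delta)^{(1+\alpha)/2} v \in C^{0,\beta-\alpha}(\R)$ yields $v \in C^{2 \cdot (1+\alpha)/2 + (\beta-\alpha)}(\R) = C^{1+\beta}(\R) = C^{1,\beta}(\R)$, the last identification being valid because $0 < \beta < 1$. Since $v$ is even and $2\pi$-periodic, $v \in X = C^{1,\beta}_{p,e}$, as claimed.

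The main technical obstacle is in the final step: one must check that the principal-value convention for $(-\Delta)^{(1+\alpha)/2} v$ matches the meaning of the singular integral in (\ref{eq:duG1eqg}) (which follows by symmetrization in $t$ together with the oddness cancellations), and verify that the Schauder estimates, usually stated on $\R^n$, apply here (for our $v$ this is immediate from global boundedness due to periodicity, or by localizing to any interval containing $[-\pi,\pi]$ and extending by translation invariance). The extra assumption $\beta < 2\alpha + 1/2$ is used only to obtain the initial Hölder regularity $v \in Y$ via Morrey's embedding, consistent with the remark following (\ref{defbeta}) that this condition is only technical.
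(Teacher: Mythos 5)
Your proposal is correct and follows essentially the same route as the paper's own proof: you split $G_\alpha(\tau)=g_0|\tau|^{-2-\alpha}+r(\tau)$ using Lemma~\ref{lem:G_1}, show the remainder term (your $Tv$, the paper's $\tilde f$) belongs to $C^{0,\beta-\alpha}(\R)$ via the bounds $|g(\rho)-g_0|\le C\rho$ and the boundedness of $g$, and then apply Silvestre's result \cite[Proposition~2.8]{Sil} for the one-dimensional fractional Laplacian of order $1+\alpha$ together with $v\in L^\infty(\R)$ to conclude $v\in C^{1,\beta}$, hence $v \in X$ by periodicity and evenness.
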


\begin{proof}
Put $\Gamma_v(s,t)=v(s)-v(s-t)$ for $s,t \in \R$. Recalling  Lemma \ref{lem:G_1}, we write 
\begin{align}
f(s) = \int_{\R}  \G_v(s,t)G_{\a}\left({t/\mu}\right)\, dt &=\mu^{2+\alpha} \int_{\R} \G_v(s,t) |t|^{-2-\a}g(t^2/\mu^2)\, dt\nonumber \\
&=\mu^{2+\alpha} g_0 \int_{\R} \G_v(s,t) |t|^{-2-\a}\, dt+ \ti{f}(s), \label{eq:form-Ga-to-Frac}
\end{align}
where
$$
\ti{f}: \R \to \R, \qquad \ti{f}(s):= \mu^{2+\alpha} \int_{\R} \G_v(s,t) |t|^{-2-\a}\bigl(g(t^2/\mu^2)-g_0\bigr)\, dt.
$$
By Lemma \ref{lem:G_1}, $g$ is a bounded function on $(0,\infty)$ which satisfies $|g(t^2/\mu^2)-g_0| \le C t^2$ for $t \in (-\mu,\mu)$ by \eqref{eq:gr2-g0}. Moreover, we have $| \G_v(s,t)- \G_v(\ov{s},t)|\leq 2\|v\|_{Y}|s-\ov{s}|^{\beta-\alpha}$ for $s, \ov{s},t \in \R$. Consequently, we deduce that  
$$
\|\ti{f}\|_{Y}\leq C \|v\|_{Y}. 
$$
with a constant $C>0$ independent of $v$.  Now thanks to \eqref{eq:form-Ga-to-Frac}, the equality \eqref{eq:duG1eqg} becomes
$$
 \int_{\R} \frac{v(s)-v(s-t)}{ |t|^{2+\a}}\, dt=\frac{1}{\mu^{2+\alpha}g_0} \bigl(f(s) -\ti{f}(s)\bigr)  \qquad \textrm{ for every   } s\in \R.
$$
Since $f-\ti{f}\in Y\subset C^{0,\b-\a}(\R)$ and $v\in Y\subset L^\infty(\R)$, by  
\cite[Proposition~2.8]{Sil}  we conclude  that $v\in X$. 
\end{proof}

We are now in a position to apply the Crandall-Rabinowitz theorem \cite{Crandall1971}, which will give rise to the following bifurcation property. 
\begin{proposition}\label{propPhi-crandall-rabinowitz}
\label{sec:nonl-probl-solve-2}
For $N\geq 3$, let $\mu_*$ be defined as in Proposition~\ref{propPhi}, let $X_\perp \subset X$ be the closed subspace given in (\ref{def-X-perp}), so that $X= X_\perp \oplus \langle  \cos(\cdot) \rangle$. Moreover, let $\cD_\Phi \subset \R \times X$ 
be the open set defined in (\ref{eq:DPhi}). Then there exists $a_0>0$ and a $C^\infty$ curve 
$$
(-a_0,a_0) \to \cD_\Phi, \qquad a \mapsto (\mu(a),\varphi_a) 
$$
such that 
\begin{itemize}
\item[(i)] $\Phi(\mu(a),\varphi_a)=0$ for $a \in (-a_0,a_0)$. 
\item[(ii)] $\mu(0)= \mu_*$. 
\item[(iii)]  $\varphi_a = a \bigl(\cos(\cdot) + v_a\bigr)$ for $a \in (-a_0,a_0)$, and 
$$
(-a_0,a_0) \to X_\perp, \qquad a \mapsto v_a 
$$
is a $C^\infty$ curve satisfying $v_0=0$.
\item[(iv)] $\mu(a)=\mu(-a)$ and 
$\varphi_{-a}(s)= \varphi_a(s+\pi)$ for $a \in (-a_0,a_0)$, $s \in \R$.
\end{itemize}
\end{proposition}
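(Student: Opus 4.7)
The plan is a direct application of the Crandall--Rabinowitz bifurcation theorem \cite{Crandall1971} to the map $\Phi$ at the point $(\mu_*,0)$, since all its hypotheses have been prepared in Propositions~\ref{lem:diff-H} and~\ref{propPhi}. Indeed, by Proposition~\ref{lem:diff-H}, the map $\Phi:\cD_\Phi\to Y$ defined in (\ref{defPhi}) is of class $C^\infty$; by construction $\Phi(\mu,0)=0$ for every $\mu>0$; and Proposition~\ref{propPhi} shows that $L=L_{\mu_*}=D_\varphi\Phi(\mu_*,0)$ is a Fredholm operator of index zero whose kernel is spanned by $\cos(\cdot)$, whose range $R(L)=Y_\perp$ has codimension one, and which satisfies the transversality condition $\partial_\mu\big|_{\mu=\mu_*}L_\mu\cos(\cdot)\notin R(L)$. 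The $C^\infty$ version of the Crandall--Rabinowitz theorem, i.e.\ the implicit function theorem applied to the Lyapunov--Schmidt reduced equation in the splitting $X=\langle\cos(\cdot)\rangle\oplus X_\perp$, then produces a $C^\infty$ curve of nontrivial zeros $(\mu(a),\varphi_a)$, parametrized so that $\mu(0)=\mu_*$ and $\varphi_a=a(\cos(\cdot)+v_a)$ with $v_a\in X_\perp$ and $v_0=0$. This yields parts (i)--(iii) of the proposition.

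For the symmetry property (iv) I will exploit horizontal translation invariance. Define the linear map $\tau:X\to X$ by $(\tau\varphi)(s)=\varphi(s+\pi)$. For $\varphi\in X$ (even and $2\pi$-periodic), one has $(\tau\varphi)(-s)=\varphi(\pi-s)=\varphi(s-\pi)=\varphi(s+\pi)=(\tau\varphi)(s)$, so $\tau$ is a linear isometry of $X$; the same computation shows $\tau$ is an isometry of $Y$. Moreover, a change of variable gives $\int_{-\pi}^\pi\cos(s)(\tau v)(s)\,ds=-\int_{-\pi}^\pi\cos(s)v(s)\,ds$, so $\tau$ preserves $X_\perp$. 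Since the NMC operator is invariant under horizontal translations of $E_u$, visible for instance from \eqref{new-rep-H}, we have $H(u(\cdot+\pi))(s)=H(u)(s+\pi)$ for $u\in\cO$, and since $H(\mu)$ is a constant function we deduce the key identity $\Phi(\mu,\tau\varphi)=\tau\Phi(\mu,\varphi)$. In particular, $\tau$ sends zeros of $\Phi$ to zeros of $\Phi$.

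To conclude, I will use the local uniqueness part of the Crandall--Rabinowitz theorem: in a neighborhood of $(\mu_*,0)$ the zero set of $\Phi$ consists exactly of the trivial branch $\{(\mu,0)\}$ and the bifurcating branch $\{(\mu(a),\varphi_a)\}_{|a|<a_0}$. Since $\tau\cos(\cdot)=-\cos(\cdot)$, we may write
$$
\tau\varphi_a \;=\; -a\cos(\cdot)+a\,\tau v_a \;=\; (-a)\bigl(\cos(\cdot)+(-\tau v_a)\bigr),
$$
which is precisely the Crandall--Rabinowitz parametrization at the parameter value $-a$, with remainder $-\tau v_a\in X_\perp$. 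For $a$ small enough this point lies in the neighborhood of local uniqueness and is nontrivial; hence necessarily $\mu(a)=\mu(-a)$ and $\varphi_{-a}=\tau\varphi_a$, i.e.\ $\varphi_{-a}(s)=\varphi_a(s+\pi)$, which is (iv).

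The proof is essentially mechanical given the groundwork in Propositions~\ref{lem:diff-H} and~\ref{propPhi}; the only mildly delicate point is ensuring that the bifurcation curve is $C^\infty$ (not merely continuous as in \cite{Cabre2015A}), but this is automatic from the $C^\infty$-character of $\Phi$ in the implicit-function-theorem step of the Crandall--Rabinowitz argument.
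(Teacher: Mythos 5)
Your proposal is correct and follows essentially the same route as the paper: (i)--(iii) by direct application of the Crandall--Rabinowitz theorem (whose hypotheses are exactly Proposition~\ref{propPhi} plus the smoothness of $\Phi$), and (iv) by observing that the $\pi$-translate $\varphi_a(\cdot+\pi)=-a(\cos(\cdot)-v_a(\cdot+\pi))$ is again a zero of $\Phi(\mu(a),\cdot)$ with remainder in $X_\perp$, then invoking the local uniqueness statement of \cite[Theorem 1.7]{Crandall1971} (after shrinking $a_0$, as the paper does explicitly) to identify it with the branch at parameter $-a$.
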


\begin{proof}
The claims (i)-(iii) follow by a direct application of the Crandall-Rabinowitz theorem as given in Theorems 1.7 and 1.18 of
\cite{Crandall1971}. The assumptions of this theorem are satisfied by Proposition~\ref{propPhi}. To see (iv), we put 
$$
\psi_{a}(s):=\varphi_a(s+\pi)= -a \bigl(\cos(s) + w_a\bigr) \qquad \text{for $a \in (-a_0,a_0)$,}
$$ 
where $w_a \in X$ is defined by $w_a(s):= - v_a(s+\pi)$. We then have 
$$
H(\mu(a)+\psi_a)(s)=H(\mu(a)+\phi_a)(s+\pi)=H(\mu(a))\qquad \text{for $s \in \R$, $a \in (-a_0,a_0)$}
$$
and thus $\Phi(\mu(a),\psi_a)=0$ for $a \in (-a_0,a_0)$. By the local uniqueness statement (1.8) in \cite[Theorem 1.7]{Crandall1971}, there exists $\eps \in (0,a_0)$ such that 
\begin{equation}
  \label{eq:set-equality}
\Bigl\{\Bigl(\mu(a), -a \bigl(\cos(\cdot) + w_a\bigr)\Bigr)\,:\, |a|<\eps\Bigr \} \subset 
\Bigl\{\Bigl(\mu(a),a \bigl(\cos(\cdot) + v_a\bigr)\Bigr)\,\:\, |a|< a_0 \Bigr\}
\end{equation}
By noting in addition that $w_a \in X_\perp$ as a consequence of the fact that $v_a \in X_\perp$, it follows that 
$$
\mu(a)=\mu(-a)\quad \text{and}\quad w_a=v_{-a}\qquad \text{for $a \in (-\eps,\eps)$,}
$$
hence also $\phi_{-a}= \psi_a$ for $a \in (-\eps,\eps)$. Replacing $a_0$ by $\eps$, we thus conclude that properties (i)-(iv) hold.
\end{proof}

\begin{remark}
{\rm
As in our 2D paper \cite{Cabre2015A}, one could avoid using the Crandall-Rabinowitz theorem by considering the map
$(a,\varphi) \mapsto\frac{1}{a}\{H(\mu+a\varphi)-H(\mu)\}$ instead of the map \eqref{defPhi}. In this way one uses
the implicit function theorem at $a=0$.

At the same time, we could have proved the 2D result in \cite{Cabre2015A} using the Crandall-Rabinowitz theorem as in
the present paper.
}
\end{remark}

\begin{proof}[Proof of Theorem~\ref{res:cyl1} (completed)]
Let $\mu_*$ be given by Proposition~\ref{propPhi}, and consider $a_0>0$ and the smooth curve 
$$
(-a_0,a_0) \to \cD_\Phi, \qquad a \mapsto (\mu(a),\varphi_a) 
$$ 
given by Proposition~\ref{propPhi-crandall-rabinowitz}. We put $R:= \mu_*$ and consider the smooth maps 
\begin{align*}
  &(-a_0,a_0) \to (0,\infty), \qquad \quad a \mapsto \lambda(a):= \frac{\mu(a)}{R}=\frac{\mu(a)}{\mu_*}\\ 
&(-a_0,a_0) \to C^{1,\beta}(\R), \qquad\; a \mapsto u_a = R + \frac{\varphi_a(\lambda(a) \,\cdot\,)}{\lambda(a)}.  
\end{align*}
With these definitions, all but two properties stated in Theorem~\ref{res:cyl1} follow immediately from Proposition~\ref{propPhi-crandall-rabinowitz} and the remarks at the beginning of this section -- note in particular that $\partial_a u_a \big|_{a=0}  = \cos(\cdot)$ follows from (1.4) and the fact that
$\lambda(0)=1$ and $v_0=0$. The following two statements still need to be justified:

{\em Claim I.} The minimal period of $u_a$ is
$2\pi/\lambda(a)$ if $a\not=0$. Clearly this is equivalent, after the rescaling, to the statement that the function 
$$
u(s)=\lambda R+ a\{\cos(s)+v_a(s)\},
$$
with $a\not=0$ and $v_a$ orthogonal to $\cos(\cdot)$ in $L^2(-\pi,\pi)$, has minimal period $2\pi$. 
This is easily proved by
expressing $v_a(s)$ as a Fourier series $a_0+\sum_{k=2}^{\infty}a_k\cos(ks)$. If $T$ is the minimal
period of $u$, we must have 
\begin{eqnarray*}
&& \hspace{-1cm}\cos(s)+v_a(s)=\cos(s+T)+v_a(s+T)\\
 &=&\cos(s)\cos(T)-\sin(s)\sin(T)
+a_0+\sum_{k=2}^{\infty}a_k\{\cos(ks)\cos(kT)-\sin(ks)\sin(kT)\}.
\end{eqnarray*}
Multiplying the first and last expressions in the above equalities by $\cos(s)$
and integrating in $(-\pi,\pi)$, we deduce that $\cos(T)=1$. Hence the minimal period is $T=2\pi$.

{\em Claim II.} We have $u_a\not\equiv u_{a'}$ if $a\not=a'$. Indeed,
if $u_a\equiv u_{a'}$, then the minimal periods of these functions coincide, and thus $\lambda(a)=\lambda(a')$.
By (\ref{form}) we then have
\begin{equation}
  \label{eq:final-explanation-identical}
R+\frac{a}{\lambda(a)}\{\cos(\cdot)+v_a(\cdot)\} = u_a(\frac{\cdot}{\lambda(a)}) = u_{a'}(\frac{\cdot}{\lambda(a)}) =
R+\frac{a'}{\lambda(a)}\{\cos(\cdot)+v_{a'}(\cdot)\},
\end{equation}
where the functions $v_a$ and $v_{a'}$ are orthogonal to
$\cos(\cdot)$ in $L^2(-\pi,\pi)$. Multiplying (\ref{eq:final-explanation-identical}) with $\cos(\cdot)$ and integrating over $[-\pi,\pi]$, we obtain 
$$
\frac{a}{\lambda(a)} \int_{-\pi}^{\pi} \cos^2(t)\,dt = \frac{a'}{\lambda(a)} \int_{-\pi}^{\pi} \cos^2(t)\,dt 
$$
and therefore $a=a'$.
\end{proof}

\section{Regularity of the NMC operator} \label{sec:preparations-1}
The purpose of this section is to give the proof of Proposition~\ref{lem:diff-H}. 
We first observe that obviously it suffices to consider $\d>0$ and to prove the regularity of the NMC operator as a map
$$
H: \cO_\d \to C^{0, \b-\a}(\R), \qquad \text{where $\cO_\d:= \{u \in C^{1,\beta}(\R)\::\: \inf_{\R} u > \delta\}$.}
$$
To accomplish this, it will be crucial to use the expression of  
$H$ given in Lemma~\ref{Hreg}.

For the readers convenience, let us first recall some notation introduced already in Subsection 2.1. We denote
$$ 
p_\s:=|\s-e_1|
$$
and, for $r\in\R$, we define 
$$
\mu_r(\s)= \frac{1}{|\s-e_1|^{N+r}} = p_\s^{-N-r}.
$$
It is easy to see that 
\be \label{eq:int-mu_n}
\int_{S^{N-2}}\mu_r(\s) \, d\s <\infty \qquad \textrm{ for every } r<-2 .
\ee

We define the  maps $\L_0, \L: C^{1,\b}(\R)\times\R\times\R\times \R\to \R $ by   
$$
\L_0(\phi,s,t,p) = \frac{\phi(s)-\phi(s-pt)}{pt} = \int_0^1 \phi'(s-\rho p t) d\rho 
$$
and 
$$
\L(\phi, s,t, p)= \L_0(\phi,s,t,p)-  \phi'(s-pt) =  \int_0^1(\phi'(s-\rho p t)-\phi'(s-p t) ) d\rho.
$$
We observe that  for every $s,s_1,s_2,t,p\in\R$, we have
\be\label{eq:Phis}
| \L(\phi, s,t, p)|\leq 2  \|\phi\|_{C^{1,\b}(\R)}\, \min( |t|^\b   |p|^\b ,1)
\ee
and also 
\be\label{eq:Phis1s2}
| \L(\phi, s_1,t, p)-\L(\phi, s_2,t, p)|\leq  2 \|\phi\|_{C^{1,\b}(\R)}\, \min( |t|^\b   |p|^\b, |s_1-s_2|^\b).
\ee
Note also that for every $s,s_1,s_2,t,p\in \R$, we have 
 \be\label{eq:est-denom-cK-s1-s2}
\left| \L_0(u,s_1,t,p)^2 -  \L_0(u,s_2,t,p)^2   \right| \leq 2 \|u\|_{C^{1,\b}(\R)}^2|s_1-s_2|^\b.
\ee

In Lemma~\ref{Hreg} we established that, for $u\in\mathcal{O}$, we have
\begin{align}\label{eq:good-expression2-NMC}
\cH(u)(s):=& -\frac{\a}{2} H({u})(s)\nonumber \\
=&\int_{S^{N-2}}\mu_{\a-2}(\s) \int_{\R}  t\L ({u}, s,t, p_\s)\cK_\a({u}, s,t,p_\s){u}^{N-2}(s-p_\s t) dt  d\s  \\
&\hspace{5mm}- \frac{{u} (s)}{2}    \int_{S^{N-2}}\mu_{\a-3}(\s) \int_{\R}  \cK_\a({u},s,t,p_\s){u}^{N-2}(s-p_\s t) dt  d\s \nonumber, 
\end{align}
where the function  $\cK_\alpha: C^{1,\b}(\R)\times\R\times\R\times\R\to \R$ is defined by 
\begin{equation}
\label{eq:defcKvarrho}
\cK_\alpha({u},s,t,p)=\frac{1}{\left( \displaystyle t^2+ t^2 \L_0(u,s,t,p)^2+{u}(s){u}(  s-p t)  \right)^{(N+\alpha)/2}}.
\end{equation}

Using this expression \eqref{eq:good-expression2-NMC} for the NMC, we shall show that $H: \cO_\d\to C^{0,\b-\a}(\R)$ is of class $C^\infty$ for every $\d>0$.
\subsection{Differential calculus toolbox}\label{ss:diff-cal}

For a finite set $\cN$, we let $|\cN|$ denote the length (cardinal) of $\cN$. It will be understood that $ |\emptyset|=0$. 
Let $Z$ be a  Banach space and $U$ a nonempty open subset of $Z$. If $T \in C^{k}(U,\R)$ and $u \in U$, then $D^kT(u)$ is a continuous 
symmetric $k$-linear form on $Z$ whose norm is given by  
$$
   \|D^{k}T ({u}) \|= \sup_{{u}_{1}, \dots,  {u}_{k}\in Z }
     \frac{|D^{k} T ({u})[u_1,\dots,u_k]| }{   \prod_{j=1}^k \|   {u}_{j} \|_{ Z }}    .
$$
If  $T_1,\, T_2 \in C^k(U,\R)$,  then also $T_1 T_2 \in C^k(U,\R)$, and the $k$-th derivative of $T_1 T_2$ at $u$ is given by 
\be \label{eq:Dk-T1T2}
D^k(T_1 T_2 )({u})[u_1,\dots,u_k]= \sum_{\cN \in  \scrS_k} D^{|\cN|} T_1({u})[u_n]_{n\in \cN} \,  D^{k-|\cN|} T_2({u}) [u_n]_{n\in \cN^c} ,
\ee
where  $\scrS_k $ is the set of subsets of $\{1,\dots, k\} $ and  $ \cN^c= \{1,\dots, k\}\setminus \cN $ for $\cN \in \scrS_k$.  If, in particular, $L: Z\to \R$ is a linear map, we have 
\be \label{eq:Dk-LT2}
D^{|\cN|}(L T_2 )({u})[u_i]_{i\in\cN}= L({u})    D^{|\cN|} T_2({u})[u_i]_{i\in\cN} +  
\sum_{j\in\cN} L({u}_j)   D^{|\cN|-1} T_2({u})  [u_i]_{\stackrel{i \in \cN}{ i\neq j}}.
\ee

We also recall the \textit{Fa\'{a} de Bruno formula}. We let $T$ be as above, $V \subset \R$ open with $T(U) \subset V$ and $g:V  \to \R$ be a  $k$-times differentiable map.  The Fa\'{a} de Bruno formula states that 
\be 
\label{eq:Faa-de-Bruno}
D^k( g\circ T)(u)[u_1,\dots,u_k]= \sum_{\Pi\in\scrP_k} g^{ (\left|\Pi\right|)}(T(u)) \prod_{P\in\Pi} D^{\left|P\right| }T(u)[u_j]_{j \in P} ,
 \ee
for $u, u_1,\dots,u_k  \in U$, where $\scrP_k$ denotes the set of all partitions of  $\left\{ 1,\dots, k \right\}$, see e.g. \cite{FaadeBruno-JW}.

\subsection{Regularity of the nonlocal mean curvature operator}\label{ss:regul-H}
For a function $u: \R \to \R$, we use the notation
$$
[u; s_1,s_2]:= u(s_1)-u(s_2)\qquad \text{for $s_1,s_2 \in \R$,} 
$$
and we note the obvious equality 
\be\label{eq:uv-s_1s_2} 
[uv; s_1,s_2] = [u;s_1,s_2]v(s_1) + u(s_2)[v;s_1,s_2] \qquad \text{for $u,v: \R \to \R$, $s_1,s_2 \in \R$.}
\ee 
We first give some estimates related to the kernel ${\cK}_\alpha$ as given in \eqref{eq:defcKvarrho}.
\begin{lemma}\label{lem:est-cK}
Let $N\geq 3$ and $k \in \N \cup \{0\}$. Then, there exists a constant $c=c(N,\alpha,\b,k,\d)>1$ such that  for all $(s,s_1,s_2,t,p)\in\R^5$ and ${u}\in \cO_\d$, we have 
   \be \label{eq:Dk-K-s}
\|  D_{u}^k {\cK}_\alpha({u},s,t ,p   )   \|\leq   
\frac{c(1+ \|{u}\|_{C^{1,\b}(\R)} )^{c}   }{    (1 + t^2)^{(N+\alpha)/2}    }  ,
 \ee 
   \be  \label{eq:Dk-K-s_1s_2}
\| [D_{u}^k {\cK}_\alpha({u},\cdot ,t ,p);s_1,s_2] \|\leq   
\frac{c(1+ \|{u}\|_{C^{1,\b}(\R)} )^{c}   \, |s_1-s_2|^\b }{    (1 + t^2)^{(N+\alpha)/2}}.     
 \ee
 \end{lemma}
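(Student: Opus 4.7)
The plan is to exploit the explicit structural form of $\cK_\alpha$. Writing $\cK_\alpha(u,s,t,p) = g(T(u,s,t,p))$ with
$$g(x) = x^{-(N+\alpha)/2}, \qquad T(u,s,t,p) = t^2 + t^2\,\L_0(u,s,t,p)^2 + u(s)u(s-pt),$$
the crucial observation is that $T$ is at most \emph{quadratic} in $u$, so $D_u^j T \equiv 0$ for $j\geq 3$. Hence Faà di Bruno's formula \eqref{eq:Faa-de-Bruno} collapses to a finite sum over partitions $\Pi \in \scrP_k$ whose blocks $P$ all satisfy $|P|\in\{1,2\}$. Derivatives of $g$ are explicit: $g^{(j)}(x) = c_j\,x^{-(N+\alpha)/2-j}$ for constants $c_j$ depending only on $N,\alpha,j$.

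The first input I will need is a two-sided control of $T$. Since $u \in \cO_\delta$ one has $u(s)u(s-pt)\geq \delta^2$, hence $T(u,s,t,p) \geq t^2 + \delta^2 \geq \min(1,\delta^2)(1+t^2)$. Simultaneously, the trivial bounds $|\L_0(w,\cdot)|\leq \|w\|_{C^{1,\beta}(\R)}$ and $\|w\|_\infty \leq \|w\|_{C^{1,\beta}(\R)}$ give
$$|D_u T(u)[v]| + |D_u^2 T(u)[v_1,v_2]| \le C(1+t^2)\bigl(1+\|u\|_{C^{1,\beta}(\R)}\bigr)\prod_i \|v_i\|_{C^{1,\beta}(\R)}.$$
Plugging these into Faà di Bruno, every partition term is bounded by
$$|g^{(|\Pi|)}(T)|\prod_{P\in\Pi}|D^{|P|}T(u)[v_j]_{j\in P}| \le C\,T^{-(N+\alpha)/2-|\Pi|}(1+t^2)^{|\Pi|}\bigl(1+\|u\|_{C^{1,\beta}(\R)}\bigr)^{|\Pi|}\prod_j \|v_j\|_{C^{1,\beta}(\R)};$$
the lower bound $T\geq c_\delta(1+t^2)$ absorbs the $(1+t^2)^{|\Pi|}$ factor against $T^{-|\Pi|}$, leaving exactly $(1+t^2)^{-(N+\alpha)/2}$, which proves \eqref{eq:Dk-K-s}.

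For the Hölder difference \eqref{eq:Dk-K-s_1s_2}, I write the Faà di Bruno expansion at $s_1$ and $s_2$ and telescope: a sum in which each summand has \emph{exactly one} factor replaced by an $s_1$–$s_2$ difference. When the replaced factor is $g^{(|\Pi|)}(T_1)-g^{(|\Pi|)}(T_2)$, the mean value theorem yields $(T_1-T_2)\int_0^1 g^{(|\Pi|+1)}(\rho T_1+(1-\rho)T_2)\,d\rho$; the estimate
$$|T(u,s_1,t,p)-T(u,s_2,t,p)| \le C\,(1+t^2)\bigl(1+\|u\|_{C^{1,\beta}(\R)}\bigr)^{2}|s_1-s_2|^\beta,$$
obtained from \eqref{eq:est-denom-cK-s1-s2}, from $|u(s_1-pt)-u(s_2-pt)| \le C\|u\|_{C^{1,\beta}(\R)}|s_1-s_2|^\beta$ (valid for all $s_1,s_2$ after interpolating the Lipschitz and $L^\infty$ bounds), and from \eqref{eq:uv-s_1s_2}, provides the required $|s_1-s_2|^\beta$ together with one extra $(1+t^2)$, which is absorbed by the single extra negative power of $T$ now present. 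When the replaced factor is instead some $D^{|P|}T(u,s_1,t,p)[\cdot]-D^{|P|}T(u,s_2,t,p)[\cdot]$, the same two analytic ingredients (i.e. \eqref{eq:Phis1s2}, \eqref{eq:est-denom-cK-s1-s2}, and \eqref{eq:uv-s_1s_2}) give a bound with an extra factor $|s_1-s_2|^\beta$ but no extra $(1+t^2)$ gain or loss, so the accounting closes just as in step 3.

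The step I expect to be the real obstacle is purely combinatorial bookkeeping in this last paragraph: keeping track, across all partitions $\Pi$ with blocks of size $1$ or $2$ and all telescoping choices, of how many $(1+t^2)$ factors appear in the numerator and verifying that they are always exactly compensated by the negative powers of $T$ coming from $g^{(|\Pi|)}$ or $g^{(|\Pi|+1)}$. Once this is organized partition by partition, both \eqref{eq:Dk-K-s} and \eqref{eq:Dk-K-s_1s_2} follow with a uniform polynomial constant $c(1+\|u\|_{C^{1,\beta}(\R)})^c$ depending only on $N,\alpha,\beta,k,\delta$.
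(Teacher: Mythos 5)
Your proposal is correct and follows essentially the same route as the paper's proof: writing $\cK_\alpha=g_\alpha\circ Q$ with $Q$ quadratic in $u$, invoking Fa\'a di Bruno so that only partitions with blocks of size at most two survive, using the lower bound $Q\ge t^2+\delta^2$ together with the explicit formula for $g_\alpha^{(\ell)}$ to absorb the $(1+t^2)^{|\Pi|}$ factors, and handling the H\"older difference by splitting/telescoping the product and applying the mean value theorem to $g_\alpha^{(|\Pi|)}(Q)$ exactly as in \eqref{eq:Dk-K-s_1s_2-1}--\eqref{eq:gQ-s1-s2}. The bookkeeping you flag as the remaining obstacle closes just as you predict, since a differenced factor carries the same $(1+t^2)$ count as its undifferenced counterpart.
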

\begin{proof}
Throughout this proof, the letter $c$ stands for different constants greater than one and depending only on $N,\alpha,\b,k$ and $\d$.
We define 
$$
Q:  C^{1,\b}(\R)\times \R\times \R\times \R\to \R,\qquad Q({u},s,t,p)=t^2 +t^2 \L_0(u,s,t,p)^2   + {u}(s){u}(s-p t)
$$ 
and
$$
g_\alpha \in C^\infty(\R_+, \R), \qquad 
g_\alpha(x)= x^{-(N+\alpha)/2},
$$
so that 
$$
{\cK}_\alpha({u},s,t,p)=g_\alpha\left(  Q({u},s,t,p)  \right) .
$$
 By \eqref{eq:Faa-de-Bruno} and recalling that $Q$  is quadratic in ${u}$,  we have 
 \begin{align}
 D_{u}^k {\cK}_\alpha({u},s,t,p)&[u_1,\dots,u_k] \nonumber\\
= &\sum_{\Pi\in\scrP_k^2}  g_\alpha^{(\left|\Pi\right| )  } ( Q({u},s,t,p)   ) \prod_{P\in\Pi} D^{\left|P\right| }_{u} Q({u},s,t,p)[u_j]_{j \in P},   \label{eq:Dk-K-s_1s_2-0}
 \end{align}
 where $\scrP_k^2$ denotes the set of partitions $\Pi$ of $\{1,\dots,k\}$ such that $|P| \le 2$ for every $P \in \Pi$. Hence by \eqref{eq:uv-s_1s_2} we have  
 \begin{align}
 \label{eq:Dk-K-s_1s_2-1}
& \hspace{-8mm}\left[ D_{u}^k {\cK}_\alpha({u}, \cdot,t ,p   )[u_1,\dots,u_k] ;s_1,s_2\right]\\
 =& \sum_{\Pi\in\scrP_k^2}  \left[  g_\alpha^{(\left|\Pi\right| )  }  ( Q({u},  \cdot,t,p)   ); s_1,s_2 \right] 
 \prod_{P\in\Pi} D^{\left|P\right| }_{u} Q({u},s_1,t,p)[u_j]_{j \in P}\nonumber\\
 &+\sum_{\Pi\in\scrP_k^2}    g_\alpha^{(\left|\Pi\right| )  }  ( Q({u},s_2,t,p)   )
 \Bigl[     \prod_{P\in\Pi} D^{\left|P\right| }_{u} Q({u},  \cdot,t,p)[u_j]_{j \in P}  \:;\:s_1,s_2 \Bigr].\nonumber
 \end{align}

 For $P \in \Pi$ with $|P| \le 2$,  by using \eqref{eq:est-denom-cK-s1-s2} and \eqref{eq:uv-s_1s_2}, we find that
 \begin{align}\label{eq:DP-Q-s_1s_2}
|[D^{|P|}_{u} &Q ({u},\cdot,t,p)[u_j]_{j \in P};s_1,s_2]|  \nonumber \\
&\leq  c(1+ \|{u}\|_{C^{1,\b}(\R)}^{2} )(1+t^2) |s_1-s_2|^\b \prod_{j \in P} \|{u}_j\|_{C^{1, \b}(\R)}  
\end{align}  
and 
 \begin{align}\label{eq:DP-Q-s}
 |D^{|P|}_{u} Q ({u},s  ,t,p)[u_j]_{j\in P}|
\leq  c(1+ \|{u}\|_{C^{1,\b}(\R)}^{2} )(1+t^2) \prod_{j \in P} \|{u}_j\|_{C^{1,\b}(\R)}. 
\end{align}
For $\ell \in \N$ and $x>0$, we have  
$$
  g^{(\ell)}_\alpha(x)=(-1)^{\ell}2^{-\ell} \prod_{i=0}^{\ell-1 } (N+\alpha +2 i) x^{-\frac{N+\alpha+2\ell}{2}}.
$$
Consequently, for every ${u}\in \cO_\d$, using \eqref{eq:est-denom-cK-s1-s2} and \eqref{eq:uv-s_1s_2}, we have the estimates
\begin{align}
&| \left[ g^{({\ell})}_\alpha  ( Q({u},\cdot, t,p   ) ) ;s_1,s_2\right]| \nonumber \\
&= \left |[ Q({u},\cdot  ,t,p   ) ;s_1,s_2]  \int_0^1  g^{({\ell}+1)}_\alpha  (\tau Q({u},s_1, t,p)+(1-\tau)Q({u},s_2, t,p))  d\tau \right|
   \nonumber\\
&\leq  c(1 + \|{u}\|_{C^{1,\b}(\R)}^2)\,    \, |s_1-s_2|^\b (1+t^2) (t^2+ \d^2)^{-\frac{N+\a+2{\ell} + 2}{2}}  \nonumber\\ 
&\leq  c(1 + \|{u}\|_{C^{1,\b}(\R)})^c  \frac{ |s_1-s_2|^\b}{ (1+t^2)^{\frac{N+\a+2{\ell}}{2}}} \label{eq:gQ-s1-s2}
\end{align}   
and 
\be  
\label{eq:gQ-s}
|   g^{({\ell})  }_\alpha  ( Q({u} , \cdot,t,p    )) |\leq  \frac{c}{    (1 +t^2)^{(N+\alpha+2 {\ell} )/2}    }  
\ee
for ${\ell}=0,\dots,k$. 

Therefore by  \eqref{eq:Dk-K-s_1s_2-1}, \eqref{eq:DP-Q-s_1s_2}, \eqref{eq:DP-Q-s}, \eqref{eq:gQ-s1-s2} and \eqref{eq:gQ-s}, 
we obtain
 \begin{align*}
&| \left[ D_{u}^k {\cK}_\alpha({u}, \cdot,t  ,p  )[u_1,\dots,u_k] ;s_1,s_2\right]|\\
 &\leq    c(1+ \|{u}\|_{C^{1,\b}(\R)} )^{c}|s_1-s_2|^\b  \sum_{\Pi\in\scrP_k^2}       \frac{1}{    (1 + t^2)^{\frac{N+\alpha+2 \left|\Pi\right|}{2}}    }
 \prod_{P \in \Pi} (1+t^2) \prod_{j \in P} \|{u}_j\|_{C^{1,\b}(\R)} \\
 &=      \frac{c(1+ \|{u}\|_{C^{1,\b}(\R)} )^{c}   \, |s_1-s_2|^\b }{    (1+ t^2)^{(N+\alpha)/2}    }   \sum_{\Pi\in\scrP_k^2  }    \prod_{P\in\Pi}  \prod_{j \in P} \|{u}_j\|_{C^{1, \b}(\R)}  .
 \end{align*} 
We then conclude that 
$$
| \left[ D_{u}^k {\cK}_\alpha({u}, \cdot,t ,p   )[u_1,\dots,u_k] ;s_1,s_2\right]|\leq   
\frac{c(1+ \|{u}\|_{C^{1,\b}(\R)} )^{c}   \, |s_1-s_2|^\b }{    (1+ t^2)^{(N+\alpha)/2}    }     \prod_{i=1}^k \|{u}_i\|_{C^{1, \b}(\R)}.
$$
This yields \eqref{eq:Dk-K-s_1s_2}.
 Furthermore we easily deduce from (\ref{eq:Dk-K-s_1s_2-0}), \eqref{eq:DP-Q-s} and \eqref{eq:gQ-s}    that 
$$
|  D_{u}^k {\cK}_\alpha({u},s,t ,p   )[u_1,\dots,u_k]   |\leq   
\frac{c(1+ \|{u}\|_{C^{1,\b}(\R)} )^{c}   }{    (1+ t^2)^{(N+\alpha)/2}    }     \prod_{i=1}^k \|{u}_i\|_{C^{1, \b}(\R)},
$$
completing the proof.
\end{proof}
The following two lemmas provide the desired estimates for the formal candidates to be the derivatives of $H$.
\begin{lemma}\label{lem:est-cand-deriv}  
Let $N\geq 3$, $\d>0$, ${u} \in \cO_\d$ and $\phi, {u}_1,\dots, {u}_k \in C^{1,\b}(\R)$, $\psi\in C^{0,\b}(\R)$ and $k\in \N $. Define the functions $\cF, \ti{\cF}: \R\to \R$ by 
\begin{align*}
\cF(s)&=\int_{S^{N-2}} \mu_{\a-2}(\s) \int_{\R}  t \L (\phi, s,t, p_\s)D_{u}^k {\cK}_\a({u},s ,t ,p_\s   )[u_1,\dots,u_k] \psi(s-p_\s t) dt  d\s
\end{align*}
and 
\begin{align*}
\ti \cF(s)&=\int_{S^{N-2}}\mu_{\a-3}(\s) \int_{\R}  D_{u}^k {\cK}_\a({u}, s ,t ,p_\s   )[u_1,\dots,u_k] \psi(s-p_\s t) dt  d\s.
\end{align*}
Then $\cF\in C^{0,\beta-\alpha}(\R)$ and $\ti \cF \in C^{0,\beta}(\R)$. Moreover, there exists a constant $c=c(N,\a,\b,k,\d)>1$ such that 
\begin{equation}
  \label{eq:est-F1}
\|\cF\|_{C^{0,\beta-\alpha}(\R)} \leq  c(1+ \|{u}\|_{C^{1,\b}(\R)} )^{c} \|\phi\|_{C^{1,\b}(\R)}   \|\psi\|_{C^{0,\b}(\R)}    \prod_{i=1}^k \|{u}_i\|_{C^{1, \b}(\R)} 
\end{equation}
and 
\begin{equation}
  \label{eq:est-F2}
\|\ti \cF\|_{C^{0,\beta}(\R)} \leq  c(1+ \|{u}\|_{C^{1,\b}(\R)} )^{c}    \|\psi\|_{C^{0,\b}(\R)}   \prod_{i=1}^k \|{u}_i\|_{C^{1, \b}(\R)}. 
\end{equation}
\end{lemma}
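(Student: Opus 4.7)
The plan is to reduce both estimates to integral bounds based on the pointwise estimates \eqref{eq:Phis}, \eqref{eq:Phis1s2}, \eqref{eq:Dk-K-s}, and \eqref{eq:Dk-K-s_1s_2}. A preliminary observation is that \eqref{eq:est-F2} for $\ti\cF$ is substantially easier, since $\int_{S^{N-2}}\mu_{\a-3}(\s)\,d\s<\infty$ by \eqref{eq:int-mu_n} (as $\a-3<-2$), while $\mu_{\a-2}$ is not integrable on $S^{N-2}$, so for $\cF$ the required spherical summability must be recovered from the vanishing of $\L(\phi,s,t,p_\s)$ as $p_\s\to 0$.

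For the $L^\infty$ part of \eqref{eq:est-F1}, I would apply \eqref{eq:Phis} and \eqref{eq:Dk-K-s} pointwise, reducing matters to bounding $\int_{S^{N-2}}\mu_{\a-2}(\s)\,J(p_\s)\,d\s$, where $J(p):=\int_\R |t|\min(|t|^\b p^\b,1)\,(1+t^2)^{-(N+\a)/2}\,dt$. Splitting the $t$-integral by whether $|t|p\le 1$ or not, and using $N\ge 3$ to guarantee integrability at infinity, one obtains $J(p_\s)\le C p_\s^\b$ for $p_\s\in(0,2]$. Passing to polar coordinates near $e_1$, where $d\s\sim p_\s^{N-3}\,dp_\s$, the outer integrand becomes $p_\s^{\b-\a-1}$, which is integrable precisely because $\b>\a$.

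For the H\"older seminorm of $\cF$, I would telescope $\cF(s_1)-\cF(s_2)$ via $a_1b_1c_1-a_2b_2c_2=(a_1-a_2)b_1c_1+a_2(b_1-b_2)c_1+a_2b_2(c_1-c_2)$ with $a_i=\L(\phi,s_i,t,p_\s)$, $b_i=D_u^k\cK_\a(u,s_i,t,p_\s)[u_1,\dots,u_k]$, and $c_i=\psi(s_i-p_\s t)$. The terms containing the $b$- and $c$-differences each produce a factor $|s_1-s_2|^\b$ (from \eqref{eq:Dk-K-s_1s_2} and the $C^{0,\b}$-seminorm of $\psi$) multiplying exactly the integral that controlled the $L^\infty$ bound; hence they are dominated by $C|s_1-s_2|^\b$, which is bounded by $C|s_1-s_2|^{\b-\a}$ on $\{|s_1-s_2|\le 1\}$.

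The delicate term is the one with the $\L$-difference, whose sharp bound from \eqref{eq:Phis1s2} is $\min(|t|^\b p_\s^\b,|s_1-s_2|^\b)$. The main obstacle is that a naive interpolation $\min(A,B)\le A^{\a/\b}B^{(\b-\a)/\b}$ extracts only $p_\s^\a$ from the minimum, and $\int_{S^{N-2}}\mu_{\a-2}(\s)\,p_\s^\a\,d\s$ still diverges, behaving like $\int p_\s^{-1}\,dp_\s$ near $e_1$. My remedy is to partition the $(t,\s)$ domain into $\{|t|p_\s\le|s_1-s_2|\}$ (where one uses the bound $|t|^\b p_\s^\b$) and $\{|t|p_\s>|s_1-s_2|\}$ (where one uses the bound $|s_1-s_2|^\b$), and to further split each region according to $p_\s\lessgtr|s_1-s_2|$. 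The four resulting explicit integrals each evaluate to $C|s_1-s_2|^{\b-\a}$, with the convergences near $p_\s=0$ hinging once more on $\b>\a$ from \eqref{defbeta}. The case $|s_1-s_2|>1$ is disposed of by the trivial bound $|\cF(s_1)-\cF(s_2)|\le 2\|\cF\|_{L^\infty(\R)}$. Finally, \eqref{eq:est-F2} follows from the same telescoping scheme without any $\L$-factor and with the integrable weight $\mu_{\a-3}$, which yields the stronger H\"older exponent $\b$ directly from \eqref{eq:Dk-K-s_1s_2} and the $C^{0,\b}$-regularity of $\psi$.
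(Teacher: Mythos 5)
Your proposal is correct and follows essentially the same route as the paper's proof: telescope the product, apply \eqref{eq:Phis}, \eqref{eq:Phis1s2}, \eqref{eq:Dk-K-s}, \eqref{eq:Dk-K-s_1s_2} together with the H\"older continuity of $\psi$, bound the easier terms by the same integral that gives the $L^\infty$ estimate, and recover the exponent $\beta-\alpha$ in the remaining $\L$-difference term by splitting the integration at $p_\s\sim|s_1-s_2|$, which is exactly where $\beta>\alpha$ enters. The only cosmetic difference is the bookkeeping in that delicate term: the paper first absorbs the $t$-dependence pointwise, using $|t|\min(|t|^\beta p_\s^\beta,|s_1-s_2|^\beta)\le C(1+t^2)^{(1+\beta)/2}\min\{p_\s^\beta,|s_1-s_2|^\beta\}$, and then splits only the spherical integral at $p_\s\sim|s_1-s_2|$, whereas you split the joint $(t,\s)$-domain into four regions; both computations give $C|s_1-s_2|^{\beta-\alpha}$.
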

\begin{proof} 
Throughout this proof, the letter $c$ stands for different constants greater than one and depending only on $N,\alpha,\b,k$ and $\d$. We define
$$
F(s,t,p):=t  \L (\phi,s,t, p) D_{u}^k {\cK}_\a({u}, s ,t,p    ) [u_1,\dots,u_k] \psi(s-p t).
$$
We now use \eqref{eq:uv-s_1s_2},  the estimates \eqref{eq:Phis}, \eqref{eq:Phis1s2}, \eqref{eq:Dk-K-s}, 
\eqref{eq:Dk-K-s_1s_2} and the fact $\psi\in C^{0,\b}(\R) $. We also assume that $|s_1-s_2|\leq 1\leq 2$, which leads 
(since also $|p|\leq 2$) to $|p|^\b|s_1-s_2|^\b\le 2^\b \min (|p|^\b,|s_1-s_2|^\b)$.
We deduce that 
\begin{align}
&|[F(\cdot,t,p); s_1,s_2]|\le  c(1+ \|{u}\|_{C^{1,\b}(\R)} )^{c} \;\times
 \nonumber \\
&\Bigl(\frac{| t| \min( |t|^\b   |p|^\b, |s_1-s_2|^\b)}{    (1+ t^2)^{(N+\a)/2}    } 
+ \frac{|t|^{\b+1}   |p|^\b|s_1-s_2|^\b }{    (1+t^2)^{(N+\a)/2}    }\Bigr)
\|\phi\|_{C^{1,\b}(\R)}\,   \|\psi\|_{C^{0,\b}(\R)}   \prod_{i=1}^k \|{u}_i\|_{C^{1, \b}(\R)} 
 \nonumber \\ 
&\leq c(1+ \|{u}\|_{C^{1,\b}(\R)} )^{c}\frac{\min \{|p|^\beta,|s_1-s_2|^\beta\}}{(1+ t^2)^{\frac{N+\alpha-\beta-1}{2}}} \|\phi\|_{C^{1,\b}(\R)}\,   \|\psi\|_{C^{0,\b}(\R)}  \prod_{i=1}^k \|{u}_i\|_{C^{1, \b}(\R)},\nonumber
\end{align}
and consequently, since $N-1+\alpha -\beta>1$,  
\begin{align*}
|[ \cF; s_1,s_2]|&\le \int_{S^{N-2}} \mu_{\a-2}(\s) \int_{\R} |[F(\cdot,t,p_\s); s_1,s_2]|dtd\s\\
 &\le c(1+ \|{u}\|_{C^{1,\b}(\R)} )^{c}\|\phi\|_{C^{1,\b}(\R)}\,   \|\psi\|_{C^{0,\b}(\R)}    
 \prod_{i=1}^k \|{u}_i\|_{C^{1, \b}(\R)} \, \times\\
 &\qquad\qquad 
\int_{S^{N-2}} \mu_{\a-2}(\s) \min \{|p_\s|^\beta,|s_1-s_2|^\beta\}d\s.  
\end{align*}
We then have
\begin{align*}
\int_{S^{N-2}} &\mu_{\a-2}(\s) \min \{|p_\s|^\beta,|s_1-s_2|^\beta\}d\s \\
&= |S^{N-3}| \int_{-1}^1 \frac{(1-\sigma_1^2)^{\frac{N-4}{2}}}{(2-2\sigma_1)^{\frac{N+\alpha-2}{2}}}  \min \{(2-2\sigma_1)^{\beta/2},|s_1-s_2|^\beta\}\,d\sigma_1\\
&\le c \int_{-1}^1 (2-2\sigma_1)^{-\alpha/2-1}  \min \{(2-2\sigma_1)^{\beta/2} ,|s_1-s_2|^\beta\}
\,d\sigma_1\\
&\le c \int_{0}^4 \tau^{-\alpha/2-1}  \min \{\tau^{\beta/2},|s_1-s_2|^\beta\}d \tau\\
&\le c \Bigl( \int_0^{|s_1-s_2|^2} \tau^{\frac{\beta-\alpha}{2}-1}\,d\tau + |s_1-s_2|^\beta \int_{|s_1-s_2|^2}^4 \tau^{-\alpha/2-1}\,d\tau\Bigr) \le c |s_1-s_2|^{\beta-\alpha }, 
\end{align*}
so that 
$$
|[ \cF; s_1,s_2]| \le c(1+ \|{u}\|_{C^{1,\b}(\R)} )^{c}  \|\phi\|_{C^{1,\b}(\R)}\,   \|\psi\|_{C^{0,\b}(\R)}|s_1-s_2|^{\beta-\alpha } \prod_{i=1}^k \|{u}_i\|_{C^{1, \b}(\R)}. 
$$
Similarly but more easily, we also obtain the estimate   
$$
\|\cF\|_{L^\infty(\R)} \le c(1+ \|{u}\|_{C^{1,\b}(\R)} )^{c} \|\phi\|_{C^{1,\b}(\R)}\,   \|\psi\|_{C^{0,\b}(\R)}  \prod_{i=1}^k \|{u}_i\|_{C^{1, \b}(\R)}, 
$$
and thus (\ref{eq:est-F1}) follows. 

To prove (\ref{eq:est-F2}), we now set 
$$
\ti{F}(s,t,p):= D_{u}^k {\cK}_\a({u},s - p t,s ,t    )[u_1,\dots,u_k]  \psi(s-pt),
$$
and we get
\begin{align*}
|[\ti{F}(\cdot,t,p); s_1,s_2]| \leq  
 c(1+ \|{u}\|_{C^{1,\b}(\R)} )^{c} \frac{ |s_1-s_2|^\b }{(1+ t^2)^{(N+\a)/2} }  \|\psi\|_{C^{0,\b}(\R)}    \prod_{i=1}^k \|{u}_i\|_{C^{1, \b}(\R)}  
\end{align*}
and 
\begin{align*}
\|[\ti{F}(\cdot,t,p)\|_{L^\infty(\R)} \leq  
 c \frac{(1+ \|{u}\|_{C^{1,\b}(\R)} )^{c}}{(1+ t^2)^{(N+\a)/2} }  \|\psi\|_{C^{0,\b}(\R)}     
 \prod_{i=1}^k \|{u}_i\|_{C^{1, \b}(\R)}.
\end{align*}
By \eqref{eq:int-mu_n},  we thus have (\ref{eq:est-F2}).
 \end{proof}
 
 With the aid of this lemma we can now prove the following result.
 
\begin{lemma}\label{prop:smooth-ovH-0}
Let $N\geq 3$, $k \in \N \cup \{0\}$, $\d>0$, $u \in \cO_\d$, and $u_1,\dots,u_k \in C^{1,\beta}(\R)$. Moreover, let $\cM,\ti \cM: \R \to \R$ be defined by 
\begin{align*}
\cM(s)&= \int_{S^{N-2}} \mu_{\a-2}(\s) \int_\R D^k_u {M}({u} , s,t,p_\s)[u_1,\dots,u_k]\,dt d\s,\\
\ti \cM(s)&= \int_{S^{N-2}} \mu_{\a-3}(\s) \int_\R D^k_u {\ti M}({u} , s,t,p_\s)[u_1,\dots,u_k]\,dt d\s,
\end{align*}
where $M, \, \ti{M}:   \cO_\d \times \R^3    \to \R$  are given by 
\begin{align*}
M({u} , s,t,p)&= t \L ({u}, s,t, p)\cK_\a({u}, s,t,p)   \, {u}^{N-2}(s-p t)\qquad \text{and}\\
\ti{M}({u} , s,t,p)&=  u(s)\cK_\a({u}, s,t,p)   \, {u}^{N-2}(s-p t).    
\end{align*}
Then $\cM \in C^{0,\beta-\alpha}(\R)$ and  $\ti \cM \in C^{0,\beta}(\R)$. Moreover, there exists a constant $c=c(N,\a,\b,k,\d)>1$ such that 
$$
\|\cM\|_{C^{0,\beta-\alpha}(\R)} \leq  c(1+ \|{u}\|_{C^{1,\b}(\R)} )^{c}   \prod_{i=1}^k \|{u}_i\|_{C^{1, \b}(\R)} 
$$
and 
\begin{equation}
  \label{eq:est-M2}
\|\ti \cM\|_{C^{0,\beta}(\R)} \leq  c(1+ \|{u}\|_{C^{1,\b}(\R)} )^{c}   \prod_{i=1}^k \|{u}_i\|_{C^{1, \b}(\R)}.
\end{equation}
\end{lemma}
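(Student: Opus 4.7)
The plan is to recognize that $M$ and $\tilde M$ factor into three pieces whose dependence on $u$ is elementary, and then to iterate the Leibniz formula \eqref{eq:Dk-T1T2} to reduce the lemma to the preceding Lemma~\ref{lem:est-cand-deriv}.

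First, I would write $M(u,s,t,p) = A(u,s,t,p)\cdot \cK_\alpha(u,s,t,p)\cdot B(u,s,t,p)$ with $A(u,s,t,p)=t\L(u,s,t,p)$ and $B(u,s,t,p)=u^{N-2}(s-pt)$. Since $u \mapsto \L(u,s,t,p)$ is linear in $u$, the derivative $D_u^j A$ is zero for $j\ge 2$, equals $t\L(u_i,s,t,p)$ when $j=1$ (applied to a single $u_i$), and equals $t\L(u,s,t,p)$ when $j=0$. The map $u\mapsto u^{N-2}(s-pt)$ is a monomial of degree $N-2$ in the evaluation functional $u\mapsto u(s-pt)$, so its $j$-th Fréchet derivative at $u$, evaluated on $[u_{i_1},\dots,u_{i_j}]$, equals $\frac{(N-2)!}{(N-2-j)!}\,u^{N-2-j}(s-pt)\prod_{\ell=1}^{j} u_{i_\ell}(s-pt)$ when $j\le N-2$, and vanishes for $j>N-2$.

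Next, I would apply the Leibniz rule \eqref{eq:Dk-T1T2} twice (first splitting off $A$, then $\cK_\alpha$ from $B$) to get that $D_u^k M(u,s,t,p)[u_1,\dots,u_k]$ is a finite sum of expressions of the form
\begin{equation*}
c_{A,J}\, t\L(\phi,s,t,p)\, D_u^{|J|}\cK_\alpha(u,s,t,p)[u_j]_{j\in J}\, \psi(s-pt),
\end{equation*}
indexed by a partition of $\{1,\dots,k\}$ into three disjoint subsets $I\sqcup J\sqcup K$ with $|I|\in\{0,1\}$ and $|K|\le N-2$, where $\phi$ is either $u$ (if $|I|=0$) or the unique $u_i$ with $i\in I$ (if $|I|=1$), and $\psi(s-pt) = u^{N-2-|K|}(s-pt)\prod_{i\in K}u_i(s-pt)$. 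The pointwise norms in $C^{1,\beta}(\R)$ and $C^{0,\beta}(\R)$ satisfy $\|\phi\|_{C^{1,\beta}}\le \|u\|_{C^{1,\beta}}+\sum_i\|u_i\|_{C^{1,\beta}}$ and $\|\psi\|_{C^{0,\beta}}\le C(1+\|u\|_{C^{1,\beta}})^{N-2}\prod_{i\in K}\|u_i\|_{C^{1,\beta}}$, since $C^{1,\beta}(\R)$ is a Banach algebra embedded in $C^{0,\beta}(\R)$.

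Integrating each such term against $\mu_{\alpha-2}(\sigma)\,dt\,d\sigma$ yields a function of $s$ that is exactly of the form $\cF$ in Lemma~\ref{lem:est-cand-deriv}, to which the estimate \eqref{eq:est-F1} applies. Summing the finitely many resulting bounds and absorbing factors of $(1+\|u\|_{C^{1,\beta}})$ into a single power gives the claimed $C^{0,\beta-\alpha}$-estimate for $\cM$. For $\tilde\cM$, the argument is identical except that the factor $t\L(u,s,t,p)$ is replaced by the linear functional $u\mapsto u(s)$, so the analogous expansion produces sums of terms of the form $\phi(s)\, D_u^{|J|}\cK_\alpha(u,s,t,p)[u_j]_{j\in J}\,\psi(s-pt)$; integrating against $\mu_{\alpha-3}(\sigma)\,dt\,d\sigma$ and multiplying out the bound $\|\phi\|_{L^\infty}\le \|\phi\|_{C^{0,\beta}}$ puts us in the framework of $\tilde\cF$, whence \eqref{eq:est-F2} yields \eqref{eq:est-M2}.

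The main obstacle here is purely bookkeeping: correctly tracking which $u_i$ are passed to $A$, $\cK_\alpha$, and $B$ in the iterated Leibniz rule, and verifying that the combinatorial coefficients stay bounded by a constant depending only on $N$ and $k$. Once this is done, all the analytic content has already been established in Lemmas \ref{lem:est-cK} and \ref{lem:est-cand-deriv}, so no further estimation of the singular kernel $\cK_\alpha$ or the integral in $d\sigma$ is required.
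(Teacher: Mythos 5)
Your proposal is correct and follows essentially the same route as the paper: expand $D_u^k M$ by the Leibniz rule (the paper groups $t\L(u,\cdot)\cK_\alpha$ as one factor and uses \eqref{eq:Dk-LT2} for the linear part, which is just a repackaging of your three-factor iteration), bound the spectator factor $\psi$ in $C^{0,\beta}(\R)$ via the H\"older algebra property, and apply Lemma~\ref{lem:est-cand-deriv} to each resulting term. The only point to state a bit more carefully is the $\ti\cM$ case, where the prefactor $\phi(s)$ pulled outside the integrals must be controlled in the full $C^{0,\beta}$-norm (not merely $L^\infty$) when multiplying against $\ti\cF$, which is harmless since $\phi$ is $u$ or some $u_i$.
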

\begin{proof}
We define $T: \cO_\d \times \R^3    \to \R$ by 
$$ 
T({u} , s,t,p)= t \L ({u}, s,t, p)\cK_\a({u}, s,t,p),  
$$
so that 
$$
M({u} , s,t,p)= T({u} , s,t,p) \, {u}^{N-2}(s-p t)  .
$$
 By \eqref{eq:Dk-T1T2}, we thus have 
 $$
 D^k_{u} M({u} , s,t,p)[u_1,\dots,u_k]  = \sum_{\cN \in  \scrS_k}\psi_{\cN}(s-p t)  D^{|\cN|}_u T({u} , s,t,p)[u_i]_{i \in \cN}   \,  
 $$
 where $\psi_{\cN}:=u^{N-2}$ in case $k=|\cN|$ and 
$$
\Psi_{\cN}:= \prod_{\ell=0}^{k-|\cN|-1} (N-2-\ell)\: {u}^{N-2-(k-|\cN |)} \prod_{i \in \cN^c}  {u}_{i} \qquad \text{in case $k>|\cN|$} 
$$
(noting that $|\cN^c|=k -|\cN|$). By \eqref{eq:Dk-LT2} we have, if $|\cN| \ge 1$, 
 \begin{align*}
 D^{|\cN|}_{u} T({u} , s,t,p)[u_i]_{i \in \cN} &=  t \L({u},s,t,p)    D^{|\cN| }_{u}\cK_\a({u},s,t,p)[u_i]_{i \in \cN} \\
&\hspace{5mm}+  \sum_{j\in\cN} t  \L({u}_{j},s,t,p)   D^{|\cN|-1}_{u} \cK_\a({u},s,t,p) 
 [u_i]_{\stackrel{i \in \cN}{ i\neq j}}.
 \end{align*}
Consequently, 
$$
 D^k_{u} M({u} , s,t,p)[u_1,\dots,u_k] = \sum_{\cN \in     \scrS_k} M_{\cN}(s,t,p)
$$
with 
\begin{align*}
M_{\cN}(s,t,p)=& t \psi_{\cN}(s-p t) \Bigl(  \L({u},s,t,p)    D^{|\cN| }_{u}\cK_\a({u},s,t,p) [u_i]_{i \in \cN}\\
&+  \sum_{j\in\cN}  \L({u}_{j},s,t,p)   D^{|\cN|-1}_{u} \cK_\a({u},s,t,p) [u_i]_{\stackrel{i \in \cN}{ i\neq j}}\Bigr).
\end{align*}
Clearly we also have that 
$$
\| \psi_{\cN} \| _{C^{0,\b}(\R)} \leq c (1+ \|{u}\|_{C^{1,\b}(\R)})^{c}    \prod_{i \in \cN^c} \| {u}_{i}\| _{C^{1,\b}(\R)}.
$$
By Lemma~\ref{lem:est-cand-deriv}, it thus follows that $\cM \in C^{0,\beta-\alpha}(\R)$ and 
\begin{align*}
\|\cM\|_{C^{0,\beta-\alpha}(\R)} &\leq  c(1+ \|{u}\|_{C^{1,\b}(\R)} )^{c} \sum_{\cN   \in     \scrS_k} \|\psi_\cN\|_{C^{0,\b}(\R)} 
   \prod_{i \in \cN} \|{u}_i\|_{C^{1, \b}(\R)}   \\
 &\leq  c(1+ \|{u}\|_{C^{1,\b}(\R)} )^{c}   \prod_{i=1}^k\|{u}_i\|_{C^{1, \b}(\R)},
\end{align*}
as claimed.  The proof of (\ref{eq:est-M2}) is similar but easier. 
\end{proof}

We are now in position to prove that $\cH: \cO_\d\to C^{0,\b-\a}(\R)$ given by \eqref{eq:good-expression2-NMC} is smooth. 
\begin{proposition}
\label{prop:smooth-ovH}  
For $N\geq 3$, the map  $\cH: \cO_{\d} \subset C^{1,\beta}(\R) \to C^{0, \b-\a}(\R)$ defined by (\ref{eq:good-expression2-NMC}) is of class $C^\infty$, and for every  $k\in\N$ we have      
\begin{align}
D^k \cH({u} )=\int_{S^{N-2}}&\!\mu_{\a-2}(\s) \!\!\int_{\R}  D^k_u M(u,\cdot ,t,p_\s)dt  d\s \nonumber\\
&- \frac{1}{2}\int_{S^{N-2}}\!\mu_{\a-3}(\s)\!\! \int_{\R}  D^k_u \ti{M}(u,\cdot ,t,p_\s)dt  d\s,\nonumber\label{eq:Dk-ovH}
\end{align}
where $M$ and $\ti M$ are defined in Lemma~\ref{prop:smooth-ovH-0}. 
\end{proposition}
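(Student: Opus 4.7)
The plan is to introduce, for every $k\ge 0$, the formal candidate
\begin{align*}
\cG_k(u)[u_1,\dots,u_k] := & \int_{S^{N-2}}\mu_{\a-2}(\s)\int_\R D^k_u M(u,\cdot,t,p_\s)[u_1,\dots,u_k]\,dt\,d\s \\
& - \frac{1}{2}\int_{S^{N-2}}\mu_{\a-3}(\s)\int_\R D^k_u \ti M(u,\cdot,t,p_\s)[u_1,\dots,u_k]\,dt\,d\s,
\end{align*}
so that $\cG_0=\cH$, and to show by induction on $k$ that $\cH\in C^k(\cO_\d;C^{0,\b-\a}(\R))$ with $D^k\cH=\cG_k$. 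The starting point is Lemma~\ref{prop:smooth-ovH-0}, which already provides that each $\cG_k(u)$ is a bounded symmetric $k$-linear map from $(C^{1,\b}(\R))^k$ into $C^{0,\b-\a}(\R)$ satisfying $\|\cG_k(u)\|\le c(1+\|u\|_{C^{1,\b}})^c$ uniformly on $\cO_\d$ (using the embedding $C^{0,\b}\hookrightarrow C^{0,\b-\a}$).

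The core of the argument is to establish, for every $u\in\cO_\d$ and every $\vp\in C^{1,\b}(\R)$ with $u+\tau\vp\in\cO_\d$ for all $\tau\in[0,1]$, the master identity
$$
\cG_k(u+\vp)[u_1,\dots,u_k]-\cG_k(u)[u_1,\dots,u_k]=\int_0^1 \cG_{k+1}(u+\tau\vp)[u_1,\dots,u_k,\vp]\,d\tau
$$
in $C^{0,\b-\a}(\R)$. To prove it, I first observe that, for each fixed $(s,t,p)$, the maps $u\mapsto M(u,s,t,p)$ and $u\mapsto \ti M(u,s,t,p)$ are $C^\infty$ on $\cO_\d$: this follows from the product rule together with the Fa\`a di Bruno formula~\eqref{eq:Faa-de-Bruno} applied to $g_\a(x)=x^{-(N+\a)/2}$, which is legal because the argument $Q(u,s,t,p)$ of $g_\a$ is bounded below by $\d^2$ on $\cO_\d$. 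The usual fundamental theorem of calculus then yields the pointwise identity for $D^k_u M$ and $D^k_u\ti M$, expressing the increment as the $\tau$-integral of $D^{k+1}_u M$ and $D^{k+1}_u\ti M$ respectively. Integrating against $\mu_{\a-2}(\s)\,d\s\,dt$ and $\mu_{\a-3}(\s)\,d\s\,dt$ and invoking Fubini to move the $\tau$-integration to the outside produces the master identity; the required integrable dominating function is provided by the pointwise bounds of Lemma~\ref{lem:est-cK}, uniformly in $\tau\in[0,1]$ since $\|u+\tau\vp\|_{C^{1,\b}}\le \|u\|_{C^{1,\b}}+\|\vp\|_{C^{1,\b}}$.

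Once the master identity is in place, the proof closes in two quick steps. First, combining the identity with the uniform bound $\|\cG_{k+1}(w)\|\le c(1+\|w\|_{C^{1,\b}})^c$ of Lemma~\ref{prop:smooth-ovH-0} shows that each $\cG_k$ is locally Lipschitz on $\cO_\d$ as a map into the space of bounded $k$-linear forms. Second, subtracting $\cG_{k+1}(u)[u_1,\dots,u_k,\vp]$ from both sides of the master identity and using the local Lipschitz continuity of $\cG_{k+1}$ just established, one obtains
$$
\bigl\|\cG_k(u+\vp)-\cG_k(u)-\cG_{k+1}(u)[\,\cdot\,,\vp]\bigr\|\le \int_0^1 \|\cG_{k+1}(u+\tau\vp)-\cG_{k+1}(u)\|\,\|\vp\|_{C^{1,\b}}\,d\tau=O\bigl(\|\vp\|_{C^{1,\b}}^2\bigr),
$$
which proves $D\cG_k(u)=\cG_{k+1}(u)$; combined with the continuity of $\cG_{k+1}$ this gives $\cG_k\in C^1$, and iterating yields the $C^\infty$-regularity of $\cH=\cG_0$ with the stated formula for $D^k\cH$.

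The main obstacle is the bookkeeping in the Fubini step: one must verify that the pointwise $(s,t,\s)$-bounds on $D^{k+1}_u M$ and $D^{k+1}_u\ti M$ inherited from Lemma~\ref{lem:est-cK} via the product-rule and Fa\`a di Bruno expansions yield a function of $(s,t,\s,\tau)\in\R\times\R\times S^{N-2}\times[0,1]$ that is integrable against $\mu_{\a-2}(\s)\,dt\,d\s\,d\tau$ (respectively against $\mu_{\a-3}$), uniformly for $\vp$ in a small $C^{1,\b}$-ball. This is essentially the content of the proofs of Lemmas~\ref{lem:est-cand-deriv} and~\ref{prop:smooth-ovH-0}, so no genuinely new estimate is required beyond noting that the dominating bounds survive the substitution $u\mapsto u+\tau\vp$.
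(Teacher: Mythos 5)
Your proposal is correct and follows essentially the same route as the paper: an induction on $k$ in which the formal expression is the candidate derivative, increments are written via the fundamental theorem of calculus under the integral sign (justified by Fubini and the dominating bounds behind Lemmas~\ref{lem:est-cK}, \ref{lem:est-cand-deriv} and~\ref{prop:smooth-ovH-0}), and the remainder is shown to be $O\bigl(\|\vp\|_{C^{1,\beta}(\R)}^{2}\bigr)$ using the uniform bounds on the next-order candidates. The only cosmetic difference is that the paper expresses the remainder directly as a double $\rho,\tau$-integral involving $D^{k+2}_u M$ and estimates it with Lemma~\ref{prop:smooth-ovH-0}, whereas you apply the first-order identity at levels $k$ and $k+1$ and invoke the resulting local Lipschitz continuity of $\cG_{k+1}$; the two arguments rest on exactly the same estimates.
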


\begin{proof}
We can write ${\cH} = {\cH}_1 - {\cH}_2$ with 
$$
{\cH}_1(u)(s) = \int_{S^{N-2}}\mu_{\a-2}(\s) \int_{\R}    M(u,s,t,p_\s)dt  d\s
$$
and 
$$ 
{\cH}_2(u)(s) = \frac{1}{2}\int_{S^{N-2}}\mu_{\a-3}(\s) \int_{\R}    \ti{M}(u,s,t,p_\s)dt  d\s.
$$
We only prove that, for $k \in \N \cup \{0\}$,  
\begin{equation}
  \label{eq:statement-H-1}
D^k {\cH}_1({u} )= \int_{S^{N-2}}\mu_{\a-2}(\s) \int_{\R}  D^k_u M(u,\cdot ,t,p_\s)dt  d\s \qquad \text{in the Fr\'echet sense.}   
\end{equation}
The corresponding statement for ${\cH}_2$ is similar but simpler to prove. Moreover, the continuity of $D^k {\cH}$ is a well known consequence of the existence of $D^{k+1}{\cH}$ in the Fr{\'e}chet sense.

 To prove (\ref{eq:statement-H-1}), we proceed by induction. For $k = 0$, the statement is true by definition. 
 Let us now assume that the statement holds true for some $k \ge 0$. Then $D^k {\cH}_1({u} )$ is given by 
 $$
 D^{k} {\cH}_1({u})[u_1,\dots,u_{k}] (s)=\int_{S^{N-2}}\mu_{\a-2}(\s)\int_\R  D^{k}_{{u}} M({u},s,t,p_\s)[u_1,\dots,u_{k}]\,  dt d\s.
$$
We fix ${u}_1,\dots,{u}_{k} \in C^{1,\b}(\R)$. For  ${u}\in \cO_\d $ and $v \in C^{1,\beta}(\R)$,  we define
$$
\G({u},v,s):=\int_{S^{N-2}}\mu_{\a-2}(\s)\int_\R  D^{k+1}_{u} M({u},s,t,p_\s)[u_1,\dots,u_k,v]\,  dt d\s.
$$
Let $u\in\cO_\d$ and $v\in  C^{1,\beta}(\R)  $  with  $\|v\|_{C^{1,\beta}(\R)}<\d/2$. We have  
\begin{align*}
D^{k}&   {\cH}_1({u}+v)[u_1,\dots,u_{k}](s)-  D^{k}  {\cH}_1({u} )[u_1,\dots,u_k] (s)- \G({u},v , s)
 \nonumber \\
 =&\int_{S^{N-2}}\mu_{\a-2}(\s) \;\times \nonumber\\
&\int_\R \int_0^1 \!\! \left\{ D^{k+1}_{u}  M({u}+\rho v ,s,t,p_\s)-D^{k+1}_{u} M({u},s,t,p_\s)\right\} \!\![u_1,\dots,u_k,v] d\rho dt d\s\nonumber \\
 =& \int_0^1 \rho \int_0^1  {\cH}_1^{\rho,\tau}(s)d\t d\rho,
\end{align*} 
with 
$$
{\cH}_1^{\rho,\tau}(s):= \int_{S^{N-2}} \mu_{\a-2}(\s) \int_{\R}  D^{k+2}_{u} M({u}+\t \rho  v,s,t,p_\s)[u_1,\dots,u_k,v,v]  dt d\s.
$$
Note that ${u}+\t \rho  v\in  \cO_{\d/2}$ for every $\t,\rho\in[0,1]$.    By Lemma~\ref{prop:smooth-ovH-0}, we have 
\begin{align*}
\|{\cH}_1^{\rho,\tau}\|_{C^{0,\b-\a}(\R)}  &\leq  c(1+ \|u+\t \rho  v \|_{C^{1,\b}(\R)} )^c \|v\|_{C^{1, \b}(\R)}^2 \prod_{i=1}^k \|u_i\|_{C^{1,\beta}(\R)}\\
&\leq  c(1+ \|u\|_{C^{1,\b}(\R)} +\| v \|_{C^{1,\b}(\R)})^c  \|v\|_{C^{1, \b}(\R)}^2 \prod_{i=1}^k \|u_i\|_{C^{1,\beta}(\R)},
\end{align*}
with a constant $c>1$ independent of $\rho,\tau,u,u_1,\dots,u_{k}$ and $v$. Consequently, 
\begin{align*}
\| D^{k}   &{\cH}_1({u}+v)[u_1,\dots,u_{k}]-  D^{k}  {\cH}_1({u} )[u_1,\dots,u_k]- \G({u},v , \cdot)\|_{C^{0,\b-\a}(\R)}\\
&\leq  c(1+ \|u\|_{C^{1,\b}(\R)} +\| v \|_{C^{1,\b}(\R)})^c  \|v\|_{C^{1, \b}(\R)}^2 \prod_{i=1}^k \|u_i\|_{C^{1,\beta}(\R)}.
\end{align*}
This shows that $D^{k+1}{\cH}_1(u)$ exists in the Frech\'et sense, and that 
$$
 D^{k+1} {\cH}_1(u)[u_1,\dots,u_{k},v] = \G(u,v,\cdot) \quad \in \; C^{0,\beta-\alpha}(\R).
$$
We conclude that (\ref{eq:statement-H-1}) holds for $k+1$ in place of $k$, and thus the proof is finished.
\end{proof}

We finally establish the promised expression for the differential
of $H$ at constant functions. By this we complete the proof of Proposition~\ref{lem:diff-H}.
\begin{proposition}
\label{cor-1st-der}
Let $N\geq 3$. If $u \equiv \kappa \in \cO_\d$ is a constant function, we have 
$$
 -\frac{\alpha}{2}D H(\kappa)v(s)= D{\cH}(\kappa)v(s)= - \frac{\alpha}{2} \kappa^{-1-\alpha} \Bigl(  PV \int_{\R} (v(s)-v(s-\kappa \tau)) G_\alpha(\tau)\,d\tau - b_\alpha v(s)\Bigr)  
$$
with 
$$
G_\alpha: \R \setminus \{0\} \to \R, \qquad G_\alpha(\tau)= \int_{S^{N-2}}\frac{2}{(\tau^2+2(1-\s_1))^{\frac{N+\a}{2}}}\,d\s
$$
and 
$$
b_\alpha = 2 \int_{\R} \int_{S^{N-2}} \frac{1-\sigma_1}{(\tau^2  + 2(1-\sigma_1))^{\frac{N+\alpha}{2}}}\,d\s d\tau.
$$
\end{proposition}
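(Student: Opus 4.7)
The plan is to apply the explicit Fr\'echet derivative formula of Proposition~\ref{prop:smooth-ovH} at $u\equiv\kappa$ and exploit the vanishing $\Lambda(\kappa,\cdot,\cdot,\cdot)\equiv 0\equiv\Lambda_0(\kappa,\cdot,\cdot,\cdot)$. The product rule, together with $D_u Q(\kappa)v=\kappa(v(s)+v(s-pt))$ (the cross-term with $\Lambda_0(\kappa)$ vanishes), yields the clean linearizations
$$
D_u M(\kappa)v \;=\; \kappa^{N-2}\Bigl[\tfrac{v(s)-v(s-pt)}{p}-t\, v'(s-pt)\Bigr](t^2+\kappa^2)^{-(N+\alpha)/2}
$$
and
$$
D_u \tilde M(\kappa)v \;=\; \frac{\kappa^{N-2}(v(s)+(N-2)v(s-pt))}{(t^2+\kappa^2)^{(N+\alpha)/2}} \;-\; \frac{(N+\alpha)\kappa^N(v(s)+v(s-pt))}{2(t^2+\kappa^2)^{(N+\alpha+2)/2}},
$$
while all other product-rule terms carry the vanishing factor $\Lambda(\kappa)$.

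Next, I would perform the substitution $\tau=p_\sigma t$ inside every inner integral. The algebraic identities $\mu_{\alpha-2}(\sigma)\,p_\sigma^{N+\alpha-2}=1$, $\mu_{\alpha-3}(\sigma)\,p_\sigma^{N+\alpha-1}=p_\sigma^2$ and $\mu_{\alpha-3}(\sigma)\,p_\sigma^{N+\alpha+1}=p_\sigma^4$ absorb the $\mu$-factors, leaving a sum of integrals against the unified kernels $K(\tau,\sigma):=(\tau^2+\kappa^2 p_\sigma^2)^{-(N+\alpha)/2}$ and $\tilde K(\tau,\sigma):=(\tau^2+\kappa^2 p_\sigma^2)^{-(N+\alpha+2)/2}$, with $\sigma$-weights $1$, $p_\sigma^2$ or $p_\sigma^4$. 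The $v'(s-\tau)$ piece is then eliminated by integration by parts in $\tau$, using $v'(s-\tau)=-\partial_\tau v(s-\tau)$ and $\partial_\tau(\tau K)=K-(N+\alpha)\tau^2\tilde K$ (boundary terms vanish since $N+\alpha>1$). Applying the algebraic identity $\tau^2\tilde K=K-\kappa^2 p_\sigma^2\tilde K$ then trades the remaining second-order-kernel occurrence for a first-order one plus a $p_\sigma^2$-weighted second-order one.

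A final rescaling $\tau=\kappa\tilde\tau$ produces the overall prefactor $\kappa^{-1-\alpha}$ and recasts the $\sigma$-integrated first-order kernel as $\int_{S^{N-2}}(\tilde\tau^2+p_\sigma^2)^{-(N+\alpha)/2}d\sigma = \tfrac{1}{2}G_\alpha(\tilde\tau)$. Collecting the coefficients of $v(s-\kappa\tilde\tau)$ and of $v(s)$ and invoking the elementary Beta-function identity
$$
\int_\R (\tilde\tau^2+p_\sigma^2)^{-(N+\alpha+2)/2}\,d\tilde\tau \;=\; \tfrac{N+\alpha-1}{N+\alpha}\,p_\sigma^{-2}\int_\R (\tilde\tau^2+p_\sigma^2)^{-(N+\alpha)/2}\,d\tilde\tau,
$$
one sees that all $p_\sigma^2$- and $p_\sigma^4$-weighted contributions cancel in such a way that the coefficient of $v(s-\kappa\tilde\tau)$ collapses to $\tfrac{\alpha}{2}\kappa^{-1-\alpha}G_\alpha(\tilde\tau)$ and the constant $v(s)$-contribution to $+\tfrac{\alpha}{2}\kappa^{-1-\alpha}b_\alpha v(s)$; reassembling these pieces yields exactly the claimed formula. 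The main obstacle I expect is the bookkeeping of roughly half a dozen resulting integrals together with the careful PV interpretation: the $v(s)$- and $v(s-\kappa\tilde\tau)$-pieces are not individually integrable in $(\sigma,\tilde\tau)$ near $\sigma=e_1$, and only the paired quantity $(v(s)-v(s-\kappa\tilde\tau))G_\alpha(\tilde\tau)$ integrates absolutely (using $\beta>\alpha$). A convenient sanity check is that for the test function $v\equiv\kappa$ both sides reduce to $\tfrac{\alpha b_\alpha}{2}\kappa^{-\alpha}$, consistent with the scaling identity $D\cH(\kappa)\kappa=-\alpha\cH(\kappa)$ (derived from $\cH(\lambda\kappa)=\lambda^{-\alpha}\cH(\kappa)$) combined with the evaluation $\cH(\kappa)=-\tfrac{b_\alpha}{2}\kappa^{-\alpha}$ from~\eqref{new-rep-H}.
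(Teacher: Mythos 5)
Your computation of the linearizations is exactly the paper's: you start from Proposition~\ref{prop:smooth-ovH}, use $\Lambda(\kappa,\cdot,\cdot,\cdot)=0$ and $D_uQ(\kappa)v=\kappa(v(s)+v(s-pt))$ to obtain the same expressions for $D_uM(\kappa)v$ and $D_u\ti M(\kappa)v$, rescale $\tau=p_\s t/\kappa$, and integrate by parts in $\tau$ via $\partial_\tau(\tau K)=K-(N+\alpha)\tau^2\ti K$. The gap is in the final cancellation step. After the integration by parts, the coefficient multiplying $v(s-\kappa\tau)$ inside the $\tau$-integral is, for each fixed $\tau$, the combination $(N+\alpha-2)\widetilde G_\alpha(\tau)-(N+\alpha)G_{1,\alpha}(\tau)-\frac{N-2}{2}G_{\alpha,0}(\tau)+\frac{N+\alpha}{4}G_{\alpha,2}(\tau)$, where $\widetilde G_\alpha,G_{\alpha,0}$ are the spherical integrals of $1,p_\s^2$ against $(\tau^2+p_\s^2)^{-(N+\alpha)/2}$ and $G_{1,\alpha},G_{\alpha,2}$ those of $p_\s^2,p_\s^4$ against $(\tau^2+p_\s^2)^{-(N+\alpha+2)/2}$. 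Since this coefficient multiplies the $\tau$-dependent function $v(s-\kappa\tau)$, it must reduce to $\alpha\widetilde G_\alpha(\tau)$ pointwise in $\tau$, not merely after integration in $\tau$; your Beta-function identity is an identity between $\tau$-integrals of the two kernels and therefore cannot be applied to these terms (it is only usable for the $v(s)$-pieces, where the pure kernel integrals appear, and even there the paper needs its $\eps$-truncated version \eqref{eq:add-identity} with boundary term). The missing ingredient is precisely the pointwise-in-$\tau$ spherical identity \eqref{eq:S-N-2-identity}, namely $(N-2)\widetilde G_\alpha-\frac{N-2}{2}G_{\alpha,0}-(N+\alpha)\{G_{1,\alpha}-\frac14 G_{\alpha,2}\}=0$, which the paper proves by an integration by parts in $\s_1$ over $S^{N-2}$ using $p_\s^2=2(1-\s_1)$ and the weight $(1-\s_1^2)^{\frac{N-2}{2}}$; it is an identity of spherical integrals, false pointwise in $\s$, so no algebraic kernel manipulation such as $\tau^2\ti K=K-p_\s^2\ti K$ can produce it. Without this identity your claimed collapse of the $v(s-\kappa\tau)$-coefficient is unjustified, and your $v\equiv\kappa$ sanity check does not detect this, since constants never test that coefficient separately.

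A secondary issue is the PV bookkeeping you defer: $(v(s)-v(s-\kappa\tau))G_\alpha(\tau)=O(|\tau|^{-1-\alpha})$ near $\tau=0$, so it is \emph{not} absolutely integrable; only the principal value converges (equivalently, the symmetrized quantity $2v(s)-v(s-\kappa\tau)-v(s+\kappa\tau)$ is absolutely integrable against $G_\alpha$). Because the separate $v(s)$- and $v(s-\kappa\tau)$-pieces are also not jointly integrable in $(\s,\tau)$ near $\s=e_1$, the regrouping of terms has to be done with a truncation $\{|\tau|\ge\eps\}$ uniform in $\s$, and then the inner boundary terms $\eps\,\widetilde G_\alpha(\eps)\{v(s-\kappa\eps)+v(s+\kappa\eps)\}$ are of size $\eps^{-1-\alpha}$ and do not vanish by themselves: they must be recombined with the $v(s)$-terms so that only $\eps\{2v(s)-v(s-\kappa\eps)-v(s+\kappa\eps)\}\widetilde G_\alpha(\eps)=O(\eps^{\beta-\alpha})\to 0$ survives, which is the paper's $R_\eps$ argument. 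So, beyond the acknowledged bookkeeping, your proposal is missing the one genuinely nontrivial identity (the spherical integration by parts) on which the whole cancellation rests.
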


\begin{proof}
Proposition~\ref{prop:smooth-ovH} gives the formula 
\begin{align}
  \label{eq:formula-first-der}
D{\cH}(u)v(s) = \int_{S^{N-2}}\!\mu_{\a-2}(\s) \!\!\int_{\R}  D_u M(u,s,t,p_\s)v\, dt  d\s\nonumber\\
- \frac{1}{2} \int_{S^{N-2}}\!\mu_{\a-3}(\s) \!\!\int_{\R}  D_u \ti M(u,s,t,p_\s)v\,dt  d\s.  
\end{align}
In the case where $u \equiv \kappa \in \cO_\d$ is a constant function, we have $\L (\kappa, s,t, p) = 0$
and thus 
\begin{align*}
D_u M(\kappa,s ,t,p)v &=   t  \L ({v}, s,t, p)\cK_\a(\kappa, s,t,p)\kappa^{N-2}\\
&= t \Bigl(\frac{v(s)-v(s-pt)}{pt}-v'(s-pt)\Bigr) \cK_\a(\kappa, s,t,p) \kappa^{N-2}\\
&= t \Bigl(\frac{v(s)-v(s-pt)}{pt}-v'(s-pt)\Bigr) \frac{\kappa^{N-2}}{(t^2 + \kappa^2)^{\frac{N+\alpha}{2}}}.
\end{align*}
Therefore, by substituting $\tau = \frac{p_\s}{\kappa}t$,  
\begin{align*}
\int_{S^{N-2}}&\!\mu_{\a-2}(\s) \!\!\int_{\R}  D_u M(u,s ,t,p_\s)v \,dt  d\s =\\ 
&=\kappa^{N-2} 
\int_{S^{N-2}} p_\s^{2-N-\a}  \!\!\int_{\R}  t \Bigl(\frac{v(s)-v(s-p_\s t)}{p_\s t}-v'(s-p_\s t)\Bigr) \frac{1}{(t^2  + \kappa^2)^{\frac{N+\alpha}{2}}}  dt  d\s\\  
&=\kappa^{-\alpha}  \int_{S^{N-2}}   \!\!\int_{\R} \tau \Bigl(\frac{v(s)-v(s-\kappa \tau)}{\kappa \tau}-v'(s-\kappa \tau)\Bigr) \frac{1}{(\tau^2  + p_\s^2)^{\frac{N+\alpha}{2}}}  d\tau  d\s\\  
&=\kappa^{-1-\alpha}  \int_{S^{N-2}} \int_{\R}   \frac{v(s)-v(s-\kappa \tau)- \kappa \tau v'(s-\kappa \tau)}{(\tau^2  + p_\s^2)^{\frac{N+\alpha}{2}}}  d\tau  d\s\\
&=\kappa^{-1-\alpha} \lim_{\eps \to 0}  \int_{S^{N-2}} \int_{|\tau| \ge \eps}   \frac{v(s)-v(s-\kappa \tau)- \kappa \tau v'(s-\kappa \tau)}{(\tau^2  + p_\s^2)^{\frac{N+\alpha}{2}}}  d\tau  d\s,
\end{align*}
whereas, by integration by parts,  
\begin{align}
&\int_{|\tau| \ge \eps}  \frac{\kappa \tau v'(s-\kappa \tau)}{(\tau^2  + p_\s^2)^{\frac{N+\alpha}{2}}} d\tau = -\int_{|\tau| \ge \eps} \partial_\tau v(s-\kappa \tau)  \frac{\tau }{(\tau^2  + p_\s^2)^{\frac{N+\alpha}{2}}} d\tau \label{int-by-parts-tau-formula}\\
&= \frac{\eps (v(s-\kappa \eps)+v(s+\kappa \eps))}{(\eps^2  + p_\s^2)^{\frac{N+\alpha}{2}}}  + \int_{|\tau| \ge \eps} v(s-\kappa \tau) \partial_\tau \Bigl(\frac{\tau }{(\tau^2  + p_\s^2)^{\frac{N+\alpha}{2}}}\Bigr) d\tau \nonumber\\
&= \frac{\eps (v(s-\kappa \eps)+v(s+\kappa \eps))}{(\eps^2  + p_\s^2)^{\frac{N+\alpha}{2}}} + \int_{|\tau| \ge \eps} v(s-\kappa \tau)\Bigl(\frac{1}{(\tau^2  + p_\s^2)^{\frac{N+\alpha}{2}}}-\frac{(N+\alpha)\tau^2}{(\tau^2  + p_\s^2)^{\frac{N+\alpha+2}{2}}}\Bigr) d\tau \nonumber \\
&=\frac{\eps (v(s-\kappa \eps)+v(s+\kappa \eps))}{(\eps^2  + p_\s^2)^{\frac{N+\alpha}{2}}} + \int_{|\tau| \ge \eps} v(s-\kappa \tau)\Bigl( \frac{(N+\alpha) p_\s^2}{(\tau^2  + p_\s^2)^{\frac{N+\alpha+2}{2}}} -\frac{N+\alpha-1}{(\tau^2  + p_\s^2)^{\frac{N+\alpha}{2}}}\Bigr) d\tau. \nonumber
\end{align}
Hence, by Fubini's theorem,  
\begin{align*}
&\int_{S^{N-2}}\!\mu_{\a-2}(\s) \!\!\int_{\R}  D_u M(\kappa,s,t,p_\s)v dt  d\s\\
&=  \kappa^{-1-\alpha} \lim_{\eps \to 0} \Biggl( v(s)  \int_{|\tau| \ge \eps} \widetilde G_\alpha(\tau)d\tau - \eps \{v(s-\kappa \eps)+v(s+\kappa \eps)\} \widetilde G_\alpha(\eps)  
\\
&\hspace{6mm}+ v(s-\kappa \tau) \Bigl( (N+\alpha-2)  \int_{|\tau| \ge \eps} \widetilde G_\alpha(\tau) \,d\tau -  (N+\alpha) \int_{|\tau| \ge \eps} G_{1,\alpha}(\tau) \,d\tau \Bigr)\Biggr).
\end{align*}
Here and in the following, we put 
\begin{align*}
\widetilde G_\alpha(\tau)&= \frac{G_\alpha(\tau)}{2}=  \int_{S^{N-2}} \frac{1}{(\tau^2  + p_\s^2)^{\frac{N+\alpha}{2}}}d\s,\quad \qquad G_{\alpha,0}(\tau) :=  \int_{S^{N-2}} \frac{p_\s^2}{(\tau^2  + p_\s^2)^{\frac{N+\alpha}{2}}}d\s\\
G_{1,\alpha}(\tau)&= \int_{S^{N-2}} 
\frac{p_\s^2}{(\tau^2  + p_\s^2)^{\frac{N+\alpha+2}{2}}}\,d\s,\quad \qquad \qquad \quad G_{\alpha,2}(\tau):= \int_{S^{N-2}} \frac{p_\s^4}{(\tau^2  + p_\s^2)^{\frac{N+\alpha+2}{2}}}\,d\s.
\end{align*}

We also have 
\begin{align*}
D_u& \ti M(\kappa ,s ,t,p)v =   v(s) \cK_\a(\kappa, s,t,p) \kappa^{N-2} + \kappa \cK_\a(\kappa, s,t,p) (N-2) \kappa^{N-3}v(s-pt)  \\
&\hspace{40mm} + \kappa^{N-1}  D_u  \cK_\a(\kappa, s,t,p)v\\
&=   \kappa^{N-2} \Bigl(  (v(s) +(N-2)v(s-pt)) \cK_\a(\kappa, s,t,p) + \kappa D_u  \cK_\a(\kappa, s,t,p)v\Bigr)\\
&=   \kappa^{N-2} \Bigl( \frac{v(s) +(N-2)v(s-pt)}{(t^2+\kappa^2)^{\frac{N+\alpha}{2}}} - \frac{N+\alpha}{2} \, \frac{\kappa^2(v(s)+v(s-pt))}{(t^2 + \kappa^2)^{\frac{N+\alpha+2}{2}}} \Bigr)\\
&=   \kappa^{-2-\alpha}p^{N+\alpha}  \Bigl( \frac{v(s) +(N-2)v(s-pt)}{(\frac{p^2}{\kappa^2} t^2+p^2)^{\frac{N+\alpha}{2}}} - \frac{N+\alpha}{2} \, \frac{p^2(v(s)+v(s-pt))}{(\frac{p^2}{\kappa^2} t^2+p^2)^{\frac{N+\alpha+2}{2}}} \Bigr), 
\end{align*}
so that we have, again by substituting $\tau = \frac{p_\s}{\kappa}t$, 
\begin{align*}
&\int_{S^{N-2}}\!\mu_{\a-3}(\s) \!\!\int_{\R}  D_u \ti M(\kappa ,s ,t,p_\s)v\,dt  d\s = \kappa^{-1-\alpha} \:\times \\
&\hspace{10mm}\int_{S^{N-2}} \int_{\R}     \Bigl(\frac{p_\s^{2}(v(s) +(N-2)v(s-\kappa t))}{(\tau^2+p_\s^2)^{\frac{N+\alpha}{2}}} - \frac{N+\alpha}{2} \,\frac{p_\s^4(v(s)+v(s-\kappa \tau))}{(\tau^2+p_\s^2)^{\frac{N+\alpha+2}{2}}} \Bigr) d \tau d \s\\
&= \kappa^{-1-\alpha} \lim_{\eps \to 0}\Biggl( \int_{|\tau| \ge \eps}  \bigl\{v(s) +(N-2)v(s-\kappa \t)\bigr\} G_{\alpha,0}(\tau)\,d\tau\\
&\qquad\qquad \hspace{10mm}- \frac{N+\alpha}{2} 
\int_{|\tau| \ge \eps} \bigl\{v(s)+v(s-\kappa \tau)\bigr\}G_{\alpha,2}(\tau)  \,d\tau\Biggr),
\end{align*}
where in the last step we used Lebesgue's and Fubini's theorems. 
Collecting and reordering everything, and recalling (\ref{eq:formula-first-der}), we thus get 
\begin{align*}
&\kappa^{1+\alpha} D{\cH}(\kappa)v(s) \\
&= \lim_{\eps \to 0} \Biggl[  \alpha \int_{|\tau| \ge \eps}(v(s-\kappa \tau)-v(s))\widetilde G_\alpha(\tau)\,d\tau 
+\alpha \frac{v(s)}{2} \int_{|\tau| \ge \eps} G_{\alpha,0}(\tau)\,d\tau \\
&\hspace{15mm} + v(s) \int_{|\tau| \ge \eps}  \Bigl(\frac{N+\alpha}{4} G_{\alpha,2}(\tau)-\frac{\alpha+1}{2}G_{\alpha,0}(\tau)  + (\alpha+1) \widetilde G_\alpha(\tau) \Bigr)\,d\tau\\
&\hspace{15mm}-\eps \{v(s-\kappa \eps)+v(s+\kappa \eps)\} \widetilde G_\alpha(\eps) \\  
&\hspace{15mm}+ \int_{|\tau| \ge \eps} v(s-\kappa \tau)  \Bigl( (N-2) \widetilde G_\alpha(\tau) -
\frac{N-2}{2}G_{\alpha,0}(\tau) \\
&\hspace{60mm} -(N+\alpha)\bigl \{G_{1,\alpha}(\tau)-\frac{G_{\alpha,2}(\tau)}{4}\bigr\}\Bigr) d\tau\Biggr]. 
\end{align*}

We now claim that for every $\tau \in \R \setminus \{0\}$ we have 
\begin{equation}
  \label{eq:S-N-2-identity}
(N-2) \widetilde G_\alpha(\tau) -\frac{N-2}{2}G_{\alpha,0}(\tau)  -  (N+\alpha)\bigl \{G_{1,\alpha}(\tau)-\frac{G_{\alpha,2}(\tau)}{4}\bigr\}= 0.
\end{equation}
Indeed,  
\begin{align*}
&(N+\alpha)\bigl \{G_{1,\alpha}(\tau)-\frac{G_{\alpha,2}(\tau)}{4}\bigr\} = (N+\alpha)\int_{S^{N-2}} \frac{ p_\s^2 - \frac{p_\s^4}{4} }{(\tau^2  + p_\s^2)^{\frac{N+\alpha+2}{2}}}d\s\\
&=|S^{N-3}| (N+\alpha) \int_{-1}^1 \frac{(1-\sigma_1^2)^{\frac{N-4}{2}}(2-2\sigma_1)(\frac{1}{2}+\frac{\sigma_1}{2}) }{(\tau^2 + (2-2\sigma_1))^{\frac{N+\alpha+2}{2}}}d \sigma_1\\
&=|S^{N-3}| (N+\alpha) \int_{-1}^1 \frac{(1-\sigma_1^2)^{\frac{N-2}{2}}}{(\tau^2 + (2-2\sigma_1))^{\frac{N+\alpha+2}{2}}}d \sigma_1\\
&= |S^{N-3}| \int_{-1}^1(1-\sigma_1^2)^{\frac{N-2}{2}} \partial_{\sigma_1}\frac{1}{(\tau^2 + (2-2\sigma_1))^{\frac{N+\alpha}{2}}}d \sigma_1\\
&=-|S^{N-3}| \int_{-1}^1 \frac{\partial_{\sigma_1} (1-\sigma_1^2)^{\frac{N-2}{2}}}{(\tau^2 + (2-2\sigma_1))^{\frac{N+\alpha}{2}}}d \sigma_1= (N-2) |S^{N-3}| \int_{-1}^1 \frac{\sigma_1 (1-\sigma_1^2)^{\frac{N-4}{2}}}{(\tau^2 + (2-2\sigma_1))^{\frac{N+\alpha}{2}}}d \sigma_1\\
&=- \frac{N-2}{2} \int_{S^{N-2}} \frac{p_\s^2-2}{(\tau^2+ p_\s^2)^{\frac{N+\alpha}{2}}}d\s=- \frac{N-2}{2} G_{\alpha,0}(\tau) + (N-2)\widetilde G_\alpha(\tau). 
\end{align*}
Hence we conclude that  
\begin{align*}
&\kappa^{1+\alpha} D{\cH}(\kappa)v(s) + \alpha \Bigl( PV \int_{\R}(v(s)-v(s-\kappa \tau))\widetilde G_\alpha(\tau)\,d\tau -\frac{v(s)}{2} \int_{|\tau| \ge \eps} G_{\alpha,0}(\tau)\,d\tau \Bigr)\\
&= \lim_{\eps \to 0}R_\eps(s),
\end{align*}
where
\begin{align*}
R_\eps(s) :=& v(s) \int_{|\tau| \ge \eps}  \Bigl(\frac{N+\alpha}{4} G_{\alpha,2}(\tau)-\frac{\alpha+1}{2}G_{\alpha,0}(\tau)  + (\alpha+1) \widetilde G_\alpha(\tau) \Bigr)\,d\tau\\
&-\eps \{v(s-\kappa \eps)+v(s+\kappa \eps)\} \widetilde G_\alpha(\eps).   
\end{align*}

The proof of the proposition is finished once we have shown that $\lim \limits_{\eps \to 0}R_\eps(s)= 0$. To see this, we note that by choosing $v \equiv 1$ in (\ref{int-by-parts-tau-formula}) we have the identity
\begin{equation}
  \label{eq:add-identity}
\int_{|\tau| \ge \eps} \frac{N+\alpha-1}{(\tau^2  + p_\s^2)^{\frac{N+\alpha}{2}}}d\tau  =  
\int_{|\tau| \ge \eps} \frac{(N+\alpha)p_\s^2}{(\tau^2  + p_\s^2)^{\frac{N+\alpha+2}{2}}}d\tau +  \frac{2\eps}{(\eps^2  + p_\s^2)^{\frac{N+\alpha}{2}}}
\end{equation}
for $\s \in S^{N-2}$. Integrating this identity over $S^{N-2}$ yields 
$$
(N+\alpha-1) \int_{|\tau| \ge \eps} \widetilde G_\alpha(\tau) d\tau =  (N+\alpha) \int_{|\tau| \ge \eps} G_{1,\alpha} (\tau) d\tau  + 2 \eps \,\widetilde G_\alpha(\eps). 
$$
On the other hand, multiplying (\ref{eq:add-identity}) with $p_\s^2$ and integrating over $S^{N-2}$ yields
$$
(N+\alpha-1) \int_{|\tau| \ge \eps} G_{\alpha,0}(\tau) d\tau =  (N+\alpha) \int_{|\tau| \ge \eps} G_{\alpha,2} (\tau) d\tau  + 2 \eps \, G_{\alpha,0}(\eps).   
$$
Inserting the two previous identities successively gives 
\begin{align*}
R_\eps(s) &= v(s) \int_{|\tau| \ge \eps}  \Bigl(\frac{N+\alpha}{4} G_{\alpha,2}(\tau)-\frac{\alpha+1}{2}G_{\alpha,0}(\tau)  + (\alpha+1) \widetilde G_\alpha(\tau) \Bigr)\,d\tau\\
&\hspace{10mm}-\eps \{v(s-\kappa \eps)+v(s+\kappa \eps)\} \widetilde G_\alpha(\eps)\\
&= v(s) \int_{|\tau| \ge \eps}  \Bigl((N+\alpha) \frac{G_{\alpha,2}(\tau)}{4} +(N+\alpha)  G_{1,\alpha} (\tau) \\
&\hspace{40mm}
 - \frac{\alpha+1}{2}G_{\alpha,0}(\tau)  -  (N-2) \widetilde G_\alpha(\tau) \Bigr)\,d\tau\\
&\hspace{10mm}+ \eps \{2 v(s)- v(s-\kappa \eps)-v(s+\kappa \eps)\} \widetilde G_\alpha(\eps)\\
&= v(s) \int_{|\tau| \ge \eps}  \Bigl((N+\alpha) (G_{1,\alpha} (\tau) - \frac{G_{\alpha,2}(\tau)}{4}) 
 + \frac{N-2}{2}G_{\alpha,0}(\tau)  -  (N-2) \widetilde G_\alpha(\tau) \Bigr)\,d\tau\\
&\hspace{10mm} + \eps \{2 v(s)- v(s-\kappa \eps)-v(s+\kappa \eps)\} \widetilde G_\alpha(\eps)- \eps G_{\alpha,0}(\eps)\\ 
&= \eps \{2 v(s)- v(s-\kappa \eps)-v(s+\kappa \eps) \} \widetilde G_\alpha(\eps)- \eps G_{\alpha,0}(\eps), 
\end{align*}
where we have used (\ref{eq:S-N-2-identity}) again in the last step. Since 
$$
\widetilde G_\alpha(\eps)=  O(\eps^{-2-\alpha}), \qquad G_{\alpha,0}(\eps) = O(\eps^{-\alpha})
$$
and 
$$
2 v(s)- v(s-\kappa \eps)-v(s+\kappa \eps) = O(\eps^{1+\beta})
$$
as $\eps \to 0$ since $v \in C^{1,\beta}(\R)$, we conclude that 
$$
\lim_{\eps \to 0} R_\eps(s)= 0,
$$
as desired. 
\end{proof}

\section{Smooth branch of periodic  bands with constant nonlocal mean curvature}
The aim of this section is to derive the regularity of the nonlocal mean curvature operator $H$ when $N=2$,
thereby deducing  
the smoothness of the branch of CNMC  bands bifurcating from the straight one. We proved that this branch is continuous in \cite{Cabre2015A}, but there we did not prove its smoothness. For this, we follow the approach of Section~4.

In case $N=2$, from Lemma~\ref{lem:geomNMC} (with $S^{N-2}=S^0=\{-1,1\}\subset\R$),  
we deduce that  the nonlocal mean curvature $H_{E_u}$ at 
the point  $(s,u(s))  $ is given by
\begin{align*}
  -\frac{\alpha}{2}  H({u})(s) =&    \int_{\R}
  \frac{
  u(s)-u(s-\tau)-\tau u'(s-\tau)   
}{
\{\t^2+({u}(s)-{u}(s-\t))^2  \}^{(2+\a)/2}
}
d\tau \nonumber \\
& +   \int_{\R}
  \frac{
  u(s)-u(s-\tau)-\tau u'(s-\tau) 
}{
\{\t^2+({u}(s)-{u}(s-\t))^2+ 4{u}(s){u}(s-\t)  \}^{(2+\a)/2}
}
d\tau     \label{new-rep-H-2D} \\
& -
 2 {{u}(s)}    \int_{\R}
 \frac{1
}{
\{\t^2+({u}(s)-{u}(s-\t))^2+4 {u}(s){u}(s-\t)  \}^{(2+\a)/2}
}
d\t .  \nonumber   
\end{align*} 
This is a quite different expression than the one we used in \cite{Cabre2015A}.

In Lemma \ref{lem:est-cand-deriv-2D} we will see that the integrals above converge absolutely in the Lebesgue sense.
Changing $\t$ to $t$ and using the notation from the beginning of Section  \ref{sec:preparations-1}, we have 
\begin{align*}
  -\frac{\alpha}{2}  H({u})(s) =&    \int_{\R}
  \frac{ t\L ({u}, s,t, 1)  
}{|t|^{2+\a}
\left(1+\L_0(u,s,t,1)^2  \right)^{(2+\a)/2}
}
d t \nonumber \\
& +   \int_{\R}
  \frac{
 t\L ({u}, s,t, 1)
}{
\left(t^2+t^2 \L_0(u,s,t,1)^2+ 4{u}(s){u}(s-t)  \right)^{(2+\a)/2}
}
dt   \\
& -
 2 {{u}(s)}    \int_{\R}
 \frac{1
}{
\left(t^2+t^2 \L_0(u,s,t,1)^2+4 {u}(s){u}(s-t)  \right)^{(2+\a)/2}
}
dt.  \nonumber   
\end{align*} 
For $\a>0$, we define 
 the maps  $\cK_{\a, 0}: C^{1,\b}(\R)\times \R\times  \R \to \R $ and  $\cK_{\a, 1}:\cO\times \R\times   \R \to \R $  by 
$$ 
\cK_{\a, 0}(u,s,t):=\frac{1}{ \left(1+  \L_0(u,s,t,1)^2  \right)^{(2+\a)/2} }
$$
and 
$$ 
\cK_{\a, 1}(u,s,t):=\frac{1}{ \left(t^2+t^2 \L_0(u,s,t,1)^2+4   {u}(s){u}(s-t)   \right)^{(2+\a)/2}  }.
$$
Therefore, for every $u\in \cO$, we have 
\begin{align*}
  -\frac{\alpha}{2}  H({u})(s) =&    \int_{\R}
  \frac{ t\L ({u}, s,t, 1)  
}{|t|^{2+\a}}  
\cK_{\a, 0}(u,s,t)
d t  +   \int_{\R}
 {
 t\L ({u}, s,t, 1)
}{
\cK_{\a, 1}(u,s,t)
}
dt   \\
&  
 -2 {{u}(s)}    \int_{\R}
 {
\cK_{\a, 1}(u,s,t)
}
dt.  \nonumber   
\end{align*} 

As in Section \ref{sec:preparations-1}, to prove the regularity of $H$, it will be crucial to have estimates  
related to the maps  $\cK_{\a,0}$ and $\cK_{\a,1}$.
\begin{lemma}\label{lem:est-cK-2D}
Let $N=2$, $k \in \N \cup \{0\}$,  $\d,\a>0$ and $\b\in (0,1)$. 
\begin{enumerate}
\item[(i)] There exists a constant $ c=c(\alpha,\b,k)>1 $ such that 
 such that  for all $(s,s_1,s_2,t)\in\R^4$ and ${u}\in C^{1,\b}(\R)$, we have 
   \be 
   \label{eq:Dk-K0-s}
\|  D_{u}^k {\cK}_{\alpha,0}({u},s,t    )   \|\leq   
 {c(1+ \|{u}\|_{C^{1,\b}(\R)} )^{c}   }   ,
 \ee 
   \be  
   \label{eq:Dk-K0-s_1s_2}
\| [D_{u}^k {\cK}_{\alpha,0}({u},\cdot ,t );s_1,s_2] \|\leq   
 {c(1+ \|{u}\|_{C^{1,\b}(\R)} )^{c}   \, |s_1-s_2|^\b } .     
 \ee
\item[(ii)] There exists  $c=c(\alpha,\b,k,\d)>1$ such that  for all $(s,s_1,s_2,t)\in\R^4$ and ${u}\in \cO_\d$, we have 
$$ \label{eq:Dk-K1-s}
\|  D_{u}^k {\cK}_{\alpha,1}({u},s,t   )   \|\leq   
\frac{c(1+ \|{u}\|_{C^{1,\b}(\R)} )^{c}   }{    (1 + t^2)^{(2+\alpha)/2}    }  ,
$$
$$  \label{eq:Dk-K1-s_1s_2}
\| [D_{u}^k {\cK}_{\alpha,1}({u},\cdot ,t );s_1,s_2] \|\leq   
\frac{c(1+ \|{u}\|_{C^{1,\b}(\R)} )^{c}   \, |s_1-s_2|^\b }{    (1 + t^2)^{(2+\alpha)/2}}.     
$$
 \end{enumerate}
 \end{lemma}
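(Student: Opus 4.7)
The strategy is to transcribe the proof of Lemma~\ref{lem:est-cK} to this two-dimensional setting, where the $\sigma$-integral and the $p_\sigma$-weights disappear entirely. Write $\cK_{\alpha,i}(u,s,t) = g_\alpha(Q_i(u,s,t))$ with $g_\alpha(x) = x^{-(2+\alpha)/2}$ and
\begin{align*}
Q_0(u,s,t) &:= 1 + \Lambda_0(u,s,t,1)^2,\\
Q_1(u,s,t) &:= t^2 + t^2\,\Lambda_0(u,s,t,1)^2 + 4\,u(s)\,u(s-t).
\end{align*}
Both $Q_i$ are quadratic in $u$, so the Fa\`a di Bruno formula \eqref{eq:Faa-de-Bruno}, applied as in \eqref{eq:Dk-K-s_1s_2-0}, reduces to a sum over partitions $\Pi$ of $\{1,\dots,k\}$ all of whose blocks have cardinality at most $2$. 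The $s_1,s_2$-variation is handled by the analogue of the decomposition \eqref{eq:Dk-K-s_1s_2-1}, combined with \eqref{eq:est-denom-cK-s1-s2} and the product rule \eqref{eq:uv-s_1s_2}.

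For part~(i), the key observation is that $Q_0 \ge 1$ uniformly, hence every derivative $g_\alpha^{(\ell)}(Q_0)$ is bounded by a constant depending only on $\alpha$ and $\ell$, with no weight in $t$. Together with the pointwise bound $|\Lambda_0(u,s,t,1)| \le \|u\|_{C^{1,\beta}(\R)}$ (coming from the definition of $\Lambda_0$ as an average of $u'$) and the Hölder-in-$s$ estimate \eqref{eq:est-denom-cK-s1-s2} for $\Lambda_0(u,\cdot,t,1)^2$, this yields the required bounds for $D_u^{|P|}Q_0$ and its $s$-variation. Inserting these into the Fa\`a di Bruno expansion gives \eqref{eq:Dk-K0-s} and \eqref{eq:Dk-K0-s_1s_2}.

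For part~(ii), the lower bound $u(s)u(s-t) \ge \delta^2$ valid for $u \in \cO_\delta$ yields
\[
Q_1(u,s,t) \;\ge\; t^2 + 4\delta^2 \;\ge\; c_\delta\,(1+t^2),
\]
with $c_\delta := \tfrac{1}{2}\min(1,4\delta^2)$, the elementary inequality being verified by splitting into $|t| \le 1$ and $|t|>1$. Consequently
\[
|g_\alpha^{(\ell)}(Q_1(u,s,t))| \;\le\; c\,(1+t^2)^{-(2+\alpha+2\ell)/2},
\]
which is the exact two-dimensional analogue of \eqref{eq:gQ-s} and supplies the decay $(1+t^2)^{-(2+\alpha)/2}$ in the final estimate. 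The pointwise and Hölder bounds on $D_u^{|P|}Q_1$ are obtained exactly as in \eqref{eq:DP-Q-s} and \eqref{eq:DP-Q-s_1s_2}: the $t^2\Lambda_0^2$-term is controlled by $\|u\|_{C^{1,\beta}}^2 (1+t^2)$ and its $s$-variation by $\|u\|_{C^{1,\beta}}^2(1+t^2)|s_1-s_2|^\beta$ via \eqref{eq:est-denom-cK-s1-s2}, while the additive term $4u(s)u(s-t)$ is handled by \eqref{eq:uv-s_1s_2} and contributes no $t$-growth. Summing over the partitions in the Fa\`a di Bruno expansion yields the two estimates in part~(ii).

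No genuine obstacle is anticipated: the argument is a direct simplification of the proof of Lemma~\ref{lem:est-cK}, the only new bookkeeping item being the elementary lower bound $Q_1 \ge c_\delta(1+t^2)$. The resulting constants depend only on $\alpha$, $\beta$, $k$ in part~(i), and additionally on $\delta$ in part~(ii), as required.
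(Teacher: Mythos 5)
Your proposal is correct and takes essentially the same route as the paper: part (ii) is proved there verbatim by the argument of Lemma~\ref{lem:est-cK}, with the lower bound $Q_1\ge t^2+4\delta^2$ playing the role of $t^2+\delta^2$, exactly as you do. For part (i) the paper phrases the argument slightly more directly (the function $y\mapsto(1+y^2)^{-(2+\alpha)/2}$ is bounded with bounded derivatives of all orders, and $\Lambda_0(\cdot,s,t,1)$ is linear with the stated pointwise and H\"older bounds), but this is the same content as your observation $Q_0\ge1$ combined with the Fa\`a di Bruno expansion, so the two proofs coincide in substance.
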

\begin{proof}
The proof of  (ii) is the same as the proof of Lemma \ref{lem:est-cK}.  

The proof of (i) is very simple. Note simply that the function $y\mapsto (1+y^2)^{-(2+\a)/2}$ is a bounded smooth
function with bounded derivatives of all orders. At the same time, $\L_0(w,s,t,1)$ is a linear operator on $w$ satisfying
$| \L_0(w,s,t,1)| \leq  \|w\|_{C^{1,\b}(\R)}$ and
$| \L_0(w,s_1,t,1) -  \L_0(w,s_2,t,1)| \leq \|w\|_{C^{1,\b}(\R)}|s_1-s_2|^\b$. The claimed estimates follow easily 
from these two facts, applying the bounds for $\L_0(w,\cdot,t,1)$ at $w=u$ and/or $w=\varphi_i$ (when considering the 
$k$-th derivatives of $\cK_{\a,0}$ at $u$ in directions $[\varphi_i]$).
\end{proof}

The following two lemmas provide the desired estimates for the formal candidates to be the derivatives of $H$.
\begin{lemma}\label{lem:est-cand-deriv-2D}  
Let $N=2$, $\d>0$, ${u} \in \cO_\d$ and $\phi, {u}_1,\dots, {u}_k \in C^{1,\b}(\R)$ and $k\in \N \cup \{0\}$. We  define the functions $\cF_i: \R\to \R$ by 
\begin{align*}
\cF_0(s)=  \int_{\R}  \frac{t}{|t|^{2+\a}} \L (\phi, s,t,1)  D_{u}^k {\cK}_{\a,0}({u},s ,t    ) [u_1,\dots,u_k] dt, 
\end{align*}
\begin{align*}
\cF_1(s)=  \int_{\R}  t\L (\phi, s,t,1)  D_{u}^k {\cK}_{\a,1}({u},s ,t    ) [u_1,\dots,u_k] dt
\end{align*}
  and 
\begin{align*}
 \cF_2(s)= \int_{\R}  D_{u}^k {\cK}_{\a,1}({u}, s ,t   )[u_1,\dots,u_k]   dt   .
\end{align*}
Then $\cF_i\in C^{0,\beta-\alpha}(\R)$, for $i=0,1$  and $\cF_2 \in C^{0,\beta}(\R)$. Moreover, there exists a constant $c=c(\a,\b,k,\d)>1$ such that 
\begin{equation}
  \label{eq:est-F1-2D}
\|\cF_i\|_{C^{0,\beta-\alpha}(\R)} \leq  c(1+ \|{u}\|_{C^{1,\b}(\R)} )^{c} \|\phi\|_{C^{1,\b}(\R)}     \prod_{i=1}^k \|{u}_i\|_{C^{1, \b}(\R)} ,
\end{equation}
for $i=0,1$ and 
\begin{equation}
  \label{eq:est-F2-2D}
\|\cF_2\|_{C^{0,\beta}(\R)} \leq  c(1+ \|{u}\|_{C^{1,\b}(\R)} )^{c}      \prod_{i=1}^k \|{u}_i\|_{C^{1, \b}(\R)}. 
\end{equation}
\end{lemma}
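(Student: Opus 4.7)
My plan is to follow exactly the architecture of the proof of Lemma~\ref{lem:est-cand-deriv} in the three-dimensional case, but now, since the $\s$-integration has disappeared, the analysis reduces to one-variable integrals in $t$ with three different integrands. I would treat $\cF_2$ first (the easiest), then $\cF_1$, and leave $\cF_0$ for last since it contains the genuinely singular kernel $t/|t|^{2+\a}$.

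For $\cF_2$, the map $D_{u}^{k}\cK_{\a,1}$ obeys the decay $(1+t^{2})^{-(2+\a)/2}$ and the H\"older difference bound $|s_1-s_2|^{\b}(1+t^2)^{-(2+\a)/2}$ by Lemma~\ref{lem:est-cK-2D}(ii). Both are integrable in $t$, so integrating over $\R$ yields the $L^{\infty}$ bound and the $C^{0,\b}(\R)$-seminorm bound with the right constants, establishing \eqref{eq:est-F2-2D}.

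For $\cF_{1}$ the integrand is $t\,\L(\phi,s,t,1)\,D_{u}^{k}\cK_{\a,1}(u,s,t)[u_{1},\dots,u_{k}]$. Applying the product rule \eqref{eq:uv-s_1s_2} to the difference $[F_{1}(\cdot,t);s_{1},s_{2}]$ and combining \eqref{eq:Phis}, \eqref{eq:Phis1s2} with Lemma~\ref{lem:est-cK-2D}(ii), I obtain
\begin{equation*}
|[F_{1}(\cdot,t);s_{1},s_{2}]|\;\le\; c(1+\|u\|_{C^{1,\b}})^{c}\|\phi\|_{C^{1,\b}}\prod_{i}\|u_{i}\|_{C^{1,\b}}\,\frac{|t|\bigl(\min(|t|^{\b},|s_{1}-s_{2}|^{\b})+\min(|t|^{\b},1)|s_{1}-s_{2}|^{\b}\bigr)}{(1+t^{2})^{(2+\a)/2}}.
\end{equation*}
Integrating in $t$ and restricting to $|s_1-s_2|\le 1$, one splits at $|t|=|s_1-s_2|$ and $|t|=1$; the vanishing of $\L$ of order $\b$ at $t=0$ kills the (non-singular) pole there, while the decay $(1+t^{2})^{-(2+\a)/2}$ controls the behavior at infinity. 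Both contributions are $O(|s_{1}-s_{2}|^{\b})\le O(|s_{1}-s_{2}|^{\b-\a})$, which gives \eqref{eq:est-F1-2D} for $i=1$ (together with the straightforward $L^{\infty}$ bound).

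The main obstacle is $\cF_{0}$, whose kernel $t/|t|^{2+\a}$ is only weakly integrable. Here the integrability hinges on the cancellation $\L(\phi,s,t,1)=O(|t|^{\b})$ as $t\to 0$ and on $\b>\a$. As for $\cF_{1}$, the product rule together with \eqref{eq:Phis}, \eqref{eq:Phis1s2} and Lemma~\ref{lem:est-cK-2D}(i) yields
\begin{equation*}
|[F_{0}(\cdot,t);s_{1},s_{2}]|\;\le\; c(1+\|u\|_{C^{1,\b}})^{c}\|\phi\|_{C^{1,\b}}\prod_{i}\|u_{i}\|_{C^{1,\b}}\bigl(\min(|t|^{\b},|s_{1}-s_{2}|^{\b})+\min(|t|^{\b},1)|s_{1}-s_{2}|^{\b}\bigr),
\end{equation*}
since $D_{u}^{k}\cK_{\a,0}$ and its H\"older-in-$s$ difference are bounded with no decay in $t$. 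Dividing by $|t|^{1+\a}$ and integrating, I split $\R$ into $\{|t|\le|s_{1}-s_{2}|\}$, $\{|s_{1}-s_{2}|<|t|\le1\}$ and $\{|t|>1\}$, again assuming $|s_{1}-s_{2}|\le 1$. On the first region the integrand is $O(|t|^{\b-1-\a})$, integrable since $\b>\a$, contributing $O(|s_{1}-s_{2}|^{\b-\a})$; on the second the integrand is $|s_{1}-s_{2}|^{\b}|t|^{-1-\a}$, contributing $O(|s_{1}-s_{2}|^{\b-\a})$ as well; on the third the decay $|t|^{-1-\a}$ gives an $O(|s_{1}-s_{2}|^{\b})\subseteq O(|s_{1}-s_{2}|^{\b-\a})$ bound. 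Combining and noting that the uniform bound $\|\cF_{0}\|_{L^{\infty}(\R)}$ is obtained by the same splitting with $\min(|t|^{\b},1)$ in place of the H\"older difference, \eqref{eq:est-F1-2D} for $i=0$ follows. This singular-kernel estimate is the only delicate point; everything else is a direct transcription of Section~\ref{sec:preparations-1} to the two-dimensional geometry.
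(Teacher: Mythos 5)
Your proposal is correct and follows essentially the same route as the paper's proof: the same ingredients (\eqref{eq:uv-s_1s_2}, \eqref{eq:Phis}, \eqref{eq:Phis1s2} and Lemma~\ref{lem:est-cK-2D}) and the same dyadic splitting at $|t|=|s_1-s_2|$ with the cancellation $\L=O(|t|^\b)$ and $\b>\a$ absorbing the singular kernel. The only cosmetic difference is that the paper bounds $|t|(1+t^2)^{-(2+\a)/2}\le |t|^{-1-\a}$ so as to treat $F_0$ and $F_1$ simultaneously, whereas you handle $\cF_1$ separately (obtaining the slightly better exponent $\b$ there) and split into three regions instead of two; both variants yield the stated estimates.
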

\begin{proof} 
Throughout this proof, the letter $c$ stands for different constants greater than one and depending only on $\alpha,\b,k$ and $\d$. We define
$$
F_0(s,t):=  \frac{t}{|t|^{2+\a}}  \L (\phi, s,t,1)  D_{u}^k {\cK}_{\a,0}({u},s ,t    ) [u_1,\dots,u_k] 
$$
and 
$$
F_1(s,t):= t  \L (\phi, s,t,1)  D_{u}^k {\cK}_{\a,1}({u},s ,t    ) [u_1,\dots,u_k] .
$$
By \eqref{eq:Phis} and Lemma \ref{lem:est-cK-2D}, we have
\begin{align}
|F_i(s,t) |\le  c(1+ \|{u}\|_{C^{1,\b}(\R)} )^{c}   \frac{\min(|t|^{\b},1)    }{   |t|^{1+\a}  } 
\|\phi\|_{C^{1,\b}(\R)}\,    \prod_{i=1}^k \|{u}_i\|_{C^{1, \b}(\R)} ,  
\end{align}
for $i=0,1$ and consequently,  
\be
\label{eq:est-cF1-L-infty}
 \|\cF_i\|_{L^\infty(\R)}\leq   c(1+ \|{u}\|_{C^{1,\b}(\R)} )^{c} \|\phi\|_{C^{1,\b}(\R)}\,     \prod_{i=1}^k \|{u}_i\|_{C^{1, \b}(\R)}.
\ee

We now use \eqref{eq:uv-s_1s_2},  the estimates \eqref{eq:Phis}, \eqref{eq:Phis1s2} and Lemma \ref{lem:est-cK-2D} to get 
\begin{align*}
&|[F_i(\cdot,t); s_1,s_2]|\le  c(1+ \|{u}\|_{C^{1,\b}(\R)} )^{c} \;\times
 \nonumber \\
&\hspace{10mm}\Bigl(\frac{\min( |t|^\b    , |s_1-s_2|^\b)}{  |t|^{1+\a}   } 
+ \frac{\min(|t|^{\b},1)   |s_1-s_2|^\b }{   |t|^{1+\a}  }\Bigr)
\|\phi\|_{C^{1,\b}(\R)}\,    \prod_{i=1}^k \|{u}_i\|_{C^{1, \b}(\R)} .
\end{align*}
This leads to
\begin{align} 
\label{eq:est-Fi-s1s2}
|[ \cF_i; s_1,s_2]|&\le  \int_{\R} |[F_i(\cdot,t); s_1,s_2]|dt   \nonumber\\
 &\le c(1+ \|{u}\|_{C^{1,\b}(\R)} )^{c}\|\phi\|_{C^{1,\b}(\R)}\,   
 \prod_{i=1}^k \|{u}_i\|_{C^{1, \b}(\R)} \, \times \\
 &\qquad\qquad 
\int_{\R}\left\{  \min( |t|^\b    , |s_1-s_2|^\b)+\min(|t|^{\b} ,1)  |s_1-s_2|^\b    \right\}      |t|^{-1-\a}dt.   \nonumber 
\end{align}
Assuming $|s_1-s_2|\leq 1$, we have   
\begin{align*}
 \left\{ \int_{|t|\leq |s_1-s_2|} + \int_{|t|\geq |s_1-s_2|}\right\} 
 &\left\{  \min( |t|^\b    , |s_1-s_2|^\b)+\min(|t|^{\b} ,1)  |s_1-s_2|^\b    \right\}      |t|^{-1-\a}dt\\
  \leq& c \left(  \int_{|t|\leq |s_1-s_2|}    |t|^{\b-\a-1}    dt 
 + |s_1-s_2|^\b  \int_{|t|\geq |s_1-s_2|}         |t|^{-1-\a}dt \right) \\
 \leq & c|s_1-s_2|^{\b-\a}. 
\end{align*}
Using this in \eqref{eq:est-Fi-s1s2},  we then conclude that,  for $i=0,1$,
$$
|[ \cF_i; s_1,s_2]| \le c(1+ \|{u}\|_{C^{1,\b}(\R)} )^{c}  \|\phi\|_{C^{1,\b}(\R)}\,  |s_1-s_2|^{\beta-\alpha } \prod_{i=1}^k \|{u}_i\|_{C^{1, \b}(\R)}.
$$
This together with \eqref{eq:est-cF1-L-infty} give (\ref{eq:est-F1-2D}). 

To prove (\ref{eq:est-F2-2D}), we now set 
$$
{F_2}(s,t):= D_{u}^k {\cK}_{\a,1}({u}, s ,t    )[u_1,\dots,u_k]  ,
$$
and  by   Lemma \ref{lem:est-cK-2D},  we have 
\begin{align*}
|[{F_2}(\cdot,t); s_1,s_2]| \leq  
 c(1+ \|{u}\|_{C^{1,\b}(\R)} )^{c} \frac{ |s_1-s_2|^\b }{(1+ t^2)^{(2+\a)/2} }      \prod_{i=1}^k \|{u}_i\|_{C^{1, \b}(\R)}  
\end{align*}
and 
\begin{align*}
\|[{F_2}(\cdot,t)\|_{L^\infty(\R)} \leq  
 c \frac{(1+ \|{u}\|_{C^{1,\b}(\R)} )^{c}}{(1+ t^2)^{(2+\a)/2} }        
 \prod_{i=1}^k \|{u}_i\|_{C^{1, \b}(\R)}.
\end{align*}
We thus have (\ref{eq:est-F2-2D}), since    $\cF_2(\cdot)=\int_{\R}F_2(\cdot,t)dt$. 
 \end{proof}
 Next, we define
$$
M_0({u}, s,t)= \frac{t}{|t|^{2+\a}}\L(u,s,t, 1) {\cK}_{\a,0}(u,s,t) ,  \qquad M_1({u}, s,t)=  {t} \L(u,s,t, 1) {\cK}_{\a,1}(u,s,t)
$$
and 
$$  
M_2({u}, s,t)=-2u(s)  {\cK}_{\a,1}  (u,s,t) ,
$$
so that
 \begin{align*}
  -\frac{\alpha}{2}  H({u})  (s) =&\sum_{\ell=0}^2   \int_{\R}  M_\ell({u},s,t)  
d t.
\end{align*} 
We also recall from \eqref{eq:Dk-LT2} that, if $k \ge 1$, 
 \begin{align*}
 D^{k}_{u} M_0({u} , s,t)[u_i]_{i \in \{1,\ldots,k\}} 
 &=  \frac{t}{|t|^{2+\a}} \L({u},s,t,1)    D^{k }_{u}\cK_{\a,0}({u},s,t)[u_i]_{i \in \{1,\ldots,k\}} \\
&\hspace{8mm} +  \sum_{j=1}^k \frac{t}{|t|^{2+\a}}  \L({u}_{j},s,t,1)   D^{k-1}_{u} \cK_{\a,0}({u},s,t) 
 [u_i]_{\stackrel{i \in \{1,\ldots,k\}}{ i\neq j}},
 \end{align*}
  \begin{align*}
 D^{k}_{u} M_1({u} , s,t)[u_i]_{i \in \{1,\ldots,k\}} &=  t \L({u},s,t,1)    
 D^{k }_{u}\cK_{\a,1}({u},s,t)[u_i]_{i \in \{1,\ldots,k\}}
 \\
&\hspace{8mm} + \sum_{j=1}^k t  \L({u}_{j},s,t,1)   D^{k-1}_{u} \cK_{\a,1}({u},s,t) 
 [u_i]_{\stackrel{i \in \{1,\ldots,k\}}{ i\neq j}}
 \end{align*}
 and
 \begin{align*}
 D^{k}_{u} M_2({u} , s,t)[u_i]_{i \in \{1,\ldots,k\}} &= -2 u(s)   D^{k }_{u}\cK_{\a,1}({u},s,t)[u_i]_{i \in \{1,\ldots,k\}} \\
&\hspace{8mm} - 2  \sum_{j=1}^k u_j(s)   D^{k-1}_{u} \cK_{\a,1}({u},s,t) 
 [u_i]_{\stackrel{i \in \{1,\ldots,k\}}{ i\neq j}}.
 \end{align*} 
 
With this and the estimates in Lemma \ref{lem:est-cand-deriv-2D}, we can now follow step by step the arguments in  Section \ref{sec:preparations-1} (noticing that the proof of  Lemma \ref{prop:smooth-ovH-0} and Proposition  \ref{prop:smooth-ovH} are essentially algebraic) to deduce that $H:\cO\subset C^{1,\b}(\R)\to C^{0,\b-\a}(\R)$  is of class $C^\infty$. Moreover 
 \begin{align*}
  -\frac{\alpha}{2}  D^kH({u}) [u_1,\dots,u_k](s) =&\sum_{\ell=0}^2   \int_{\R}D^k M_\ell({u},s,t) [u_1,\dots,u_k]
d t.
\end{align*} 
 
As  remarked earlier,  in our 2D paper \cite{Cabre2015A},  we applied the implicit function theorem to  the $C^1$ map
$$
\ov{\Phi}: \R\times \R_+\times X\to Y, \qquad   \ov{\Phi} (a,\l, v):=\frac{1}{a}\{H(\l R+a(\cos(\cdot)+v))-H(\l R)\}
$$
at the point   $(0,1,0)$, where $R>0$ was chosen in such away that  $  \ov{\Phi}  (0,1, 0)=0$ and that  the linear  maps   $D_\l  \ov{\Phi}  (0, 1, 0):\R  \to \la \cos(\cdot) \ra  $ and    $D_v  \ov{\Phi}  (0, 1, 0): X_{ \perp} \to Y_{ \perp} $  are  invertible.  
Since, for every $s\in \R$,
$$
\ov{\Phi} (a,\l, v)(s)= \int_0^1D H\left( \l R+a t (\cos(\cdot)+v) \right)[\cos(\cdot)+v](s) dt,
$$
it follows that $ \ov{\Phi}$ is  of class $C^\infty$ in a neighborhood of $(0,1,0)$, for every $R>0$. 
Hence  the curves $a\mapsto \l(a)$ and $a\mapsto v_a$  that we obtained in  \cite[Theorem 1.2]{Cabre2015A} are   smooth. 

We recall that this branch in $\R^2$ could have been obtained also using the Crandall-Rabinowitz theorem as in the present
paper.
\vspace{0.2cm}

\textbf{Acknowledgement:}
The authors wish to thank the referees for their careful reading and their valuable comments.

\end{document}